\numberwithin{equation}{section}
\theoremstyle{definition} \addtolength{\oddsidemargin}{-0.5in}
\newtheorem{definition}{Definition}[section]
\newtheorem{theorem}[definition]{Theorem}
\newtheorem{proposition}[definition]{Proposition}
\newtheorem{lemma}[definition]{Lemma}
\theoremstyle{definition}
\newtheorem{coro}[definition]{Corollary}
\newtheorem{remark}[definition]{Remark}
\newcommand{\N}{\mathbb{N}}
\newcommand{\R}{\mathbb{R}}
\def\F{{\mathcal F}}
\def\G{{\mathcal G}}
\def\H{{\mathcal H}}
\begin{document}

\title
{On factors of Gibbs measures for almost additive potentials}
\author{Yuki Yayama}
\address{Departamento de Ciencias B\'{a}sicas, Universidad del B\'{i}o-B\'{i}o, Av.Andr\'{e}s Bello, s/n Casilla 447
Chill\'{a}n, Chile} \email{yyayama@ubiobio.cl}
\begin{abstract}
Let $(X, \sigma_X), (Y, \sigma_Y)$ be one-sided subshifts and $\pi:X\rightarrow Y$ 
a factor map. Suppose that $X$ has the specification property.
Let $\mu$ be a unique invariant Gibbs measure for a sequence of continuous functions 
$\F=\{\log f_n\}_{n=1}^{\infty}$ on $X$, which is an almost additive potential with bounded variation. 
We show that $\pi\mu$ is a unique invariant Gibbs measure for a sequence of continuous functions 
$\G=\{\log g_n\}_{n=1}^{\infty}$ on $Y$.
When $(X, \sigma_X)$ is a full shift, we characterize $\G$ and $\mu$ by using relative pressure. 
This $\G$ is a generalization of a continuous function found by Pollicott and Kempton in 
their work on factors of Gibbs measures for continuous functions.  
We also consider the following question: Given a unique invariant Gibbs measure 
$\nu$ for a sequence of continuous functions $\F_2$ on $Y$, can we find an invariant Gibbs measure 
$\mu$ for  a sequence of continuous functions $\F_1$ on $X$   
such that $\pi\mu=\nu$? We show that such a measure exists under a certain condition. 
In particular, if $(X, \sigma_X)$ is a full shift and $\nu$ is a unique invariant 
Gibbs measure for a function in the Bowen class, then there exists a preimage $\mu$ of~$\nu$ 
which is a unique invariant Gibbs measure for a function in the Bowen class.
\end{abstract}
\maketitle
\pagestyle{myheadings}
\markright{On factors of Gibbs measures for almost additive potentials}
\section{Introduction}\label{intro}

Let $(X, \sigma_X), (Y, \sigma_Y)$ be one-sided shifts of finitely many symbols and $\pi:X \rightarrow Y$ a factor map.
A factor map $\pi$ is a continuous and surjective function that satisfies $\pi\circ \sigma_X=\sigma_Y\circ \pi$. 
We have the following general questions concerning factors of Gibbs measures.
Given an invariant Gibbs measure $\mu$ for a continuous function $f$ on $X$,   
what are the properties of the image $\pi\mu$ of $\mu$ under $\pi$? Under what conditions is $\pi\mu$ an invariant Gibbs measure for a continuous 
function $g$
on $Y$? What properties must $g$ have?   
For a survey of the study of factors of Gibbs measures for continuous functions, see the paper by Boyle and Petersen \cite{BP}.
For more results on this topic, see \cite{banff}. %by Marcus, Petersen and Weissman$
Recently, problems on factors of Gibbs measures for functions of summable variation
have been studied \cite{CUO, CU, V, JY, PK, K}. In particular, it is known from Chazottes and Ugalde \cite{CU} that if 
$\mu$ is a 
unique invariant Gibbs measure for a H\"older continuous function $f$ on $X$, where $X$ is a full shift,  
then $\pi\mu$ is a unique invariant Gibbs measure for a continuous function. 
Pollicott and Kempton \cite{PK}
showed the related results, namely, if $\mu$ is a unique invariant Gibbs measure for a function on $X$ of summable variation with a certain condition, 
then $\pi\mu$ is a unique invariant Gibbs measure for a function on $Y$ of summable variation.
Kempton \cite{K} also extended the results of \cite{PK} to the case when $X$ is a topologically mixing shift of finite type.       

On the other hand, a theory of equilibrium states for sequences of continuous functions has recently been developed \cite{B2006, m2006, 
CFH, FH, CZ, ZC, Feng, Y3, IY}. 
This extends the pressure theory for continuous functions (see \cite{W1}). 
The results have been applied 
in dimension theory in dynamics.  
In particular, the Gibbs measures for sequences of continuous functions have been useful for studying  
non-conformal repellers \cite{B2006, Feng, FH, Y2, Y3}. 

Pressure theory for sequences of continuous functions called almost additive potentials 
(see page \pageref{defaa} for definition of an almost additive potential) 
has been studied by Barreira \cite{B2006} and Mummert \cite{m2006}. 
Almost additive potentials are a generalization of continuous functions
that belong to the Bowen class.
%agh:  Who is they?  Presumably Barreira and Mummert, 
%but the preceding sentence interrupts the flow so this sentence seems to start a new topic or simly comes out of nowhere.  Either 
%reorganise the three sentences to give better flow, or name the authors explicitly here.
Barreira and Mummert defined topological pressure for an almost additive potential $\F=\{\log f_n\}_{n=1}^{\infty}$, 
proved the variational principle, and studied equilibrium states.
Similarly, Cao, Feng and Huang \cite{CFH} studied pressure theory for  
subadditive potentials (see page \pageref{defaa} for definition) while Feng and Huang \cite{FH} studied it 
in the context of asymptotically subadditive potentials (see page \pageref{defas} for definition). Asymptotically subadditive potentials
generalize almost additive potentials and subadditive potentials. 
The notion of a Gibbs measure for a continuous function was also generalized to that of a Gibbs measure for a sequence of continuous 
functions \cite{B2006, m2006}.  

In this paper, using theory of equilibrium states for sequences of continuous functions, 
we study the image of an invariant Gibbs measure for a sequence of continuous functions under a factor map.
In particular, we consider the image of an invariant Gibbs measure for an almost additive potential.
This generalizes theory of factors of Gibbs equilibrium states for continuous functions (see section \ref{sectioncomp}).
%Some general theory on the sequences of factor maps between subshifts and factors of equilibrium states for sequences of continuous 
%functions are studied by 
%Barral and Feng \cite{BF}. In particular, they showed the variation principle concerning an equilibrium state $\mu$ for a sequence of continuous 
%function and the image measure under a factor map and studied the relation between $\mu$ and $\pi\mu$.  

Let $(X, \sigma_X), (Y, \sigma_Y)$ be one-sided subshifts and $\pi:X \rightarrow Y$ a factor map. Suppose that $X$ has the specification property.
In Section \ref{sectionfactor}, we consider the image of an invariant Gibbs measure $\mu$ for an almost additive potential 
$\F=\{\log f_n\}_{n=1}^{\infty}$ on $X$ with bounded variation. Our main question is the following: 
Can we find a sequence of continuous functions $\G=\{\log g_n\}_{n=1}^{\infty}$
on $Y$ such that $\pi\mu$ is an invariant Gibbs measure 
for $\G$?  
We will answer the question by showing in Theorem \ref{main1} that the image $\pi\mu$ is a unique invariant Gibbs measure 
for an 
asymptotically subadditive potential 
$\G=\{\log g_n\}_{n=1}^{\infty}$ on $Y$ with bounded variation. It is a unique equilibrium state for 
$\G$ and it is mixing. In particular, in Corollary \ref{maincoro}, we prove that if an invariant measure $\mu$ is 
an invariant Gibbs measure for a continuous function $f$ on $X$ that belongs to
the Bowen class (we use the notation $Bow (X)$ for the Bowen class), 
then the image $\pi\mu$ is a unique invariant Gibbs measure for a subadditive potential $\bar \G=\{\log \bar g_n\}_{n=1}^{\infty}$ on $Y$ with bounded variation. 

Let $(X, \sigma_X), (Y, \sigma_Y)$ be one-sided sofic shifts and $\pi:X \rightarrow Y$ a factor map. Suppose that $X$ has the specification property.
In Section \ref{sectionrelativepressure}, our main question is 
to characterize $\mu$ on $X$ using the image measure $\pi\mu$ on $Y$. We study the relation 
between $\mu$ and $\pi\mu$ in connection to relative pressure. 
In Proposition \ref{bow2}, we show that if $\mu$ is an invariant Gibbs measure for $f\in Bow(X)$ on a 
topologically mixing shift of finite type $X$, then we can characterize 
$\pi \mu$ as the unique equilibrium state for the relative pressure $P(\sigma_X, \pi, f)$. Hence we can 
replace the subadditive potential 
$\bar \G$ found in Corollary \ref{maincoro} by  $P(\sigma_X, \pi, f)$. Moreover, 
$\mu$ is a unique relative equilibrium state of $f$ over $\pi\mu$. We generalize this result to the case in which $\mu$ is a 
unique invariant Gibbs measure for an almost 
additive potential $\F=\{\log f_n\}_{n=1}^{\infty}$ on a full shift $X$ with bounded variation.  
In this case $\pi\mu$ is a unique equilibrium state for the relative pressure 
$P(\sigma_X, \pi, \F)$, replacing $\G=\{\log g_n\}_{n=1}^{\infty}$ in Theorem \ref{main1} by $P(\sigma_X, \pi, \F)$, and 
 $\mu$ is a unique relative equilibrium state 
of $\F$ over $\pi\mu$ (see Theorem \ref{general}). 
%We note that Barral and Feng \cite{BF} has related results but 
%they are slightly different from the ones in this paper because we consider a particular setting.

In Section \ref{sectionpreimage}, we study preimages of Gibbs measures. 
Let $(X,\sigma_X), (Y, \sigma_Y)$ be one-sided sofic shifts. Suppose that $X$ has the specification property. Let $\nu\in M(Y, \sigma_Y)$ be 
the unique invariant Gibbs measure for an almost additive potential 
$\Psi=\{\log \psi_n\}_{n=1}^{\infty}$ on $Y$ with 
bounded variation. We will study the following question: 
Is there any Gibbs measure $\mu\in M(X, \sigma_X)$ associated to a sequence of continuous functions 
$\Phi=\{\log \phi_n\}_{n=1}^{\infty}$ on $X$  
such that $\pi\mu=\nu$?
We show in Theorem \ref{preimage} that, in general, under a certain condition on the factor map $\pi$, 
we can find a unique invariant Gibbs measure $\mu$ for an 
almost additive potential $\Phi=\{\log \phi_n\}_{n=1}^{\infty}$ on $X$ with 
bounded variation such that $\pi\mu=\nu$.   
In particular, if $(X, \sigma_X)$ is a full shift and $\nu$ is a unique invariant Gibbs measure for 
a function $\psi\in Bow(Y)$, then there exists a unique invariant Gibbs measure $\mu$ for $\phi\in Bow(X)$ such 
that $\pi\mu=\nu$ (Corollary \ref{cover1}).
Further, we will investigate how to construct a sequence of continuous functions $\Phi$ on $X$ so that a unique invariant Gibbs measure $\mu$ 
associated to $\Phi$ is a 
preimage of $\nu$. To this end, we use two different approaches. We obtain two 
distinct sequences of continuous functions  $\Phi_1$ on $X$ and $\Phi_2$ on $X$ and
invariant Gibbs measures $\mu_1$ associated to $\Phi_1$ and  $\mu_2$ associated to $\Phi_2$, 
such that $\pi\mu_1=\pi\mu_2=\nu$ (see Proposition \ref{key2}). In general, 
we have $\mu_1\neq \mu_2$. Therefore, in Proposition \ref{generalp}, we study a condition for $\mu_1= \mu_2$.
To finish the section, we examine a condition under which a unique invariant Gibbs measure $\mu$ for 
$\phi \in Bow (X)$ is projected to a unique
invariant Gibbs measure $\nu$ for a function that belongs to the Bowen class. 
If $\phi=\psi\circ\pi$ where $\psi\in Bow(Y)$, then we have the result. 
In Proposition \ref{small}, we extend this $\phi$ slightly to a more general function.

Finally, in Section \ref{sectioncomp}, we relate our results to the existing theory of factors of Gibbs equilibrium states for continuous functions.
Theorem \ref{main1} shows that, given a unique invariant Gibbs measure $\mu$ for an almost additive potential 
$\F=\{\log f_n\}_{n=1}^{\infty}$ on $X$ with bounded variation, $\pi\mu$ is a unique invariant Gibbs measure for 
an asymptotically subadditive potential $\G=\{\log g_n\}_{n=1}^{\infty}$ 
on $Y$ with bounded variation. 
Pollicott and Kempton \cite{PK} considered this question in a continuous case. Given a unique invariant Gibbs measure $\mu$ for a 
function $f$ on $X$ of summable variation, where $X$ is a full shift, they found a continuous function $g$ on $Y$ such that $\pi\mu$ is a Gibbs measure for $g$. 
We show in Proposition \ref{svcase} that our $\G=\{\log g_n\}_{n=1}^{\infty}$ is a generalization of the continuous function $g$ 
obtained in \cite{PK}. 

After this paper was accepted following minor revisions, I found that the related results have been studied 
by Barral and Feng \cite{BF} in their work on the weighted thermodynamic formalism. 
There, in Theorem 3.1 (i),
they showed a variational principle concerning an equilibrium state $\mu$
for a sequence of continuous functions and the image measure under a
factor map in a general setting. This is related to our Theorem 3.1.
The results of Theorem 3.1 (i) in \cite{BF} are obtained by using
relative pressure. In contrast, our Theorem 3.1 is proved by only using
the properties of
Gibbs measures. We obtain the appropriate potential for the image of a
Gibbs measure by observing the properties of a factor of the Gibbs
measure. Also, Theorem 3.1 (iii) \cite{BF} states the relationship between
$\mu$ and $\pi\mu$, and is related to our Theorem 4.8. However, while
Theorem 3.1 (iii) is studied in a more general setting, we are able to
obtain a more detailed result in Theorem 4.8.
%After finishing the paper, I realized that the related results were studied by Barral and Feng \cite{BF} on weighted thermodynamic formalism.
%In particular, they showed in Theorem 3.1 (i) \cite{BF}  the variation principle concerning an equilibrium state $\mu$ 
%for a sequence of continuous function and the image measure under a factor map under a general setting. This is related to our Theorem 3.1. 
%Theorem 3.1 \cite{BF}(i) is studied by using relative pressure. However, our Theorem 3.1 is studied by using the properties of
%Gibbs measure. The appropriate potential for the image of the Gibbs measure is obtained by observing the properties of factor of the Gibbs measure.   
%Also, in Theorem 3.1 (iii) \cite{BF} states the relation between 
%$\mu$ and $\pi\mu$ and it is related to our Theorem 4.8. However, Theorem 3.1 (iii) is studied under a more general setting and we obtain more 
%specific result in Theorem 4.8.    

\section{Background}\label{background}
 
We first summarize the basic definitions in symbolic dynamics. For notation and terminology not explained 
here, see \cite{LM}.  
$(X, \sigma_X)$ is a {\em one-sided subshift} if $X$ is a closed
shift-invariant subset of $\{1,\cdots, k\}^{\N}$ for some $k\geq
1$, i.e., $\sigma_X(X)\subseteq X$,  where the shift $\sigma_X:X\rightarrow X$ is defined by
$\sigma_{X}(x)=x', \text { for } x=(x_n)^{\infty}_{n=1}, x'=
(x'_n)^{\infty}_{n=1}\in X, x'_{n}=x_{n+1}\text{ for all } n\in \N$. Define a metric $d$  on $X$ by 
\begin{equation*}
d(x, x') = \left\{
\begin{array}{rl}
{1}/{2^{k}} & \text{if } x_i={x'}_i \text{ for all }1\leq i\leq k \text{ and } x_{k+1}\neq {x'}_{k+1},\\
1 & \text{if } x_1\neq x'_1,\\
0 & \text{otherwise. }
\end{array} \right.
\end{equation*}
%$d(x,x')={1}/{2^{k}}$ if
%$x_i={x'}_i$ for all $1\leq i\leq k$ and $x_{k+1}\neq {x'}_{k+1}$, $d(x,x')=1$ if $x_1\neq x'_1$,
%and $d(x,x')=0$ otherwise.  

For each $n \in \N,$ denote by
$B_n(X)$ the set of all $n$-blocks that occur in points in $X$.
$x_1 \dots x_n$ is an allowable word of length $n$ if $x_1\dots
x_n\in B_n(X)$. 
If $x_1\cdots x_n \in B_n(X),$ then $[x_1 \cdots
x_n]$ is a cylinder in $X$. A subshift $(X,\sigma_X)$ is {\em irreducible} if for any allowable words $u, v$ of $X$, 
there exists an allowable word $w$ of $X$ such that $uwv$ is allowable. If, in addition, 
there always exists a word $w$ with a fixed length $p>0$ such that $uwv$ is allowable, then  
$(X, \sigma_X)$ has the {\em specification property}. 
%agh:  is the following sentence necessary?  I believe this is already clear from preceding text.

Let $(X, \sigma_X)$ and $(Y, \sigma_Y)$ be subshifts and $\pi:X\rightarrow Y$ be a factor map. 
If the $i$-th position of the image of
$x$ under $\pi$ depends only on $x_i,$ then $\pi$ is a one-block
factor map. A shift of finite type $(X,\sigma_X)$ is {\em one-step} if there exists a set $F$ of forbidden blocks of length $\leq 2$ such 
that $X=\{x\in \{1, \cdots, k\}^{\N}: \omega \textnormal{ does not appear in } x \textnormal{ for any }\omega\in F\}$. 
A subshift is called a {\em sofic shift} if it is the image of a shift of finite type under a factor map. 
It is known that an irreducible sofic shift is the image of a one-step irreducible shift of 
finite type under a one-block factor map (see \cite{LM}). \label{En}
Throughout the paper, we assume that $\pi$ is a
one-block factor map and any shift of finite type $(X, \sigma_X)$ is one-step. Denote by $M(X, \sigma_X)$ the collection of all $\sigma_X$-invariant
Borel probability measures on $X$
and by $Erg(X, \sigma_X)$ all ergodic members of $M(X, \sigma_X)$. 

Next we give a brief overview of the results in pressure theory for almost additive potentials and subadditive potentials.
These generalize the work of Ruelle and Walters on theory of pressure for continuous functions.

As an application, pressure theory for sequences of continuous functions has been used to study dimension problems on  
non-conformal expanding maps. Let $T$ be the endomorphism of the torus given by $T(x,y)=(lx
\textnormal{ mod }1, my \textnormal{ mod }1 ), l>m\geq 2, l,m \in
\N$. The Hausdorff dimension of compact invariant subsets of $T$ has been widely studied \cite{B, Mc, KP, KP2, Y1, EO}.
In \cite{Feng, Y2, Y3}, this problem  was studied by using equilibrium states for sequences of continuous 
functions.
%In some cases, the Gibbs measure for an almost additive potential was studied and, in order to 
%obtain the result, factor of a Gibbs measure for an almost additive potential and 
%its preimage were studied (see Section 4 and Example 5.2 \cite{Y2}).      
The factor of equilibrium states for sequences of continuous functions and 
their preimages were studied (see Section 4 and Example 5.2 \cite{Y2}). 

%consider the problem on compact invariant sets
%of the endomorphism of the torus given by $T(x,y)=(lx
%\textnormal{ mod }1, my \textnormal{ mod }1 ), l>m\geq 2, l,m \in
%\N$. McMullen \cite{M} and Bedford \cite{B} studied by finding the measure of full dimension the Hausdorff dimension of a compact invariant set 
%whose symbolic representation is a full shift 
%under the coding map. Gatzouraz and Peres \cite{GP} translated
%this problem to the one concerning a factor map between subshifts and by using this approach
%further study has been done for the dimensions when the symbolic representation of compact invariant sets are subshifts.
%In some cases, the measure of full dimension is a Gibbs measure for an almost additive potential and in order to 
%obtain it the factor of a Gibbs measure for an almost additive potential and 
%its preimage were studied (see Section 4 and Example 5.2 \cite{Y2}).    

Let $(X, \sigma_X)$ be a subshift. For each $n\in \N$, let $\phi_n:X\rightarrow \R^{+}$ be a continuous function. 
Define a sequence of continuous functions $\Phi=\{\log \phi_n\}^{\infty}_{n=1}$ and suppose that $\Phi$
satisfies the subadditivity condition, i.e., for every $n,m\in \N$
and $x \in X$, $\phi_{n+m}(x)\leq \phi_{n}(x)
\phi_{m}(\sigma_X^{n}(x))$. We note that this is equivalent to $\{\log \phi_n\}^{\infty}_{n=1}$ being subadditive.
Then $\Phi$ is a {\em subadditive
potential} on $(X, \sigma_X)$. $\Phi$ is an {\em almost
additive potential} if there is a constant $C>0$ such that for every $n,m\in \N$ and for every $x\in
X$, $$e^{-C}\phi_{n}(x)\phi_{m}(\sigma_X^{n} x)\leq \phi_{n+m}(x)\leq e^C
\phi_{n}(x)\phi_{m}(\sigma_X^{n} x). \label{defaa}$$
Define $$M_{n}= \sup\{\frac{\phi_n(x)}{\phi_n(x')}: x,x'\in X, x_i=x'_i, \text { for all } 1 \leq i\leq n\}.$$ 
Then $\Phi$ has {\em bounded variation} if there exists a constant $M>0$ such that $\sup_{n\in \N}M_n\leq M$. 

The topological pressure $P_X(\Phi)$ of an almost additive potential $\Phi$ on $X$ 
was defined by Barreira \cite{B2006} and Mummert \cite{m2006} and the variational principle was also shown. 

If $f:X \rightarrow \R$ is a continuous function, 
we let $\phi_n(x)=e^{f(x)+\dots+f(\sigma^{n-1}_Xx)}, n\in \N,$ and 
define $\Phi=\{\log \phi_n\}_{n=1}^{\infty}$. Then $\Phi$ is an additive sequence. Also, we note that   
$\lim_{n \rightarrow \infty}{1}/{n} \int \log \phi_n \  d\mu= \int f \  d\mu$  for any $\mu\in M(X, \sigma_X)$. 
If $\Phi$ is a subadditive potential, this also holds by the subadditive ergodic theorem 
(see Theorem~10.1 in~\cite{W1}). 
For $f\in C(X)$, define 
$$V_n(f)=\sup\{f(x)-f(x'): x, x'\in X,  
x_i=x'_{i}, 1\leq i\leq n\}$$ 
and $\sigma_nf=\sum_{i=0}^{n-1}f\circ \sigma^{i}.$ 
Then the Bowen class $Bow(X)$ is the set 
$Bow (X)=\{f\in C(X): 
\sup_{n\in \N}V_{n}(\sigma_nf)<\infty\}$
(see \cite{W3}).
If $f\in Bow (X)$, then it is known from \cite{W3} that the equilibrium state for $f$ is unique and it is the unique invariant 
Gibbs measure for $f$. 
In particular, $Bow(X)$ contains the functions of summable variation (see \cite{W3}).     
We note that if $f\in Bow(X)$, then the additive potential $\Phi$
has bounded variation.  

Theorems \ref{BM} below generalize   
the variational principle for continuous functions to that for sequences of continuous functions and 
Theorem \ref{gibbs0} extends the theory of equilibrium states for continuous 
functions to almost additive potentials. In Sections \ref{sectionfactor}, \ref{sectionrelativepressure} and \ref{sectionpreimage} we will study Gibbs measures for 
sequences of continuous functions and the following 
results play important roles.

\begin{theorem}\label{BM}\cite{B2006,m2006}
Let $(X,\sigma_X)$ be a subshift
and $\Phi=\{\log \phi_n\}^{\infty}_{n=1}$ be an almost additive
potential on $(X, \sigma_X)$ with bounded variation. Then
\begin{equation}\label{vp}
P_X(\Phi)=\sup_{\mu\in
M(X,\sigma_X)}\{h_\mu(\sigma_X)+\lim_{n\rightarrow\infty}\frac{1}{n}\int\log \phi_n
d\mu\}=\sup_{\mu\in
M(X,\sigma_X)}\{h_\mu(\sigma_X)+\int \lim_{n\rightarrow\infty}\frac{1}{n}\log \phi_n
d\mu\}
\end{equation}
where the topological pressure $P_X(\Phi)$ is defined by 
$$P_X(\Phi)=\lim_{n \rightarrow
\infty}\frac{1}{n}\log (\sum_{i_1\cdots i_n \in B_n(X)}e^{\sup \log {\phi_n(x)}}),$$ 
and where
the supremum is taken over  $x\in [i_1\cdots i_n], n \in \N$. 
\end{theorem}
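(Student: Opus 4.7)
The plan is to establish the variational principle via the Misiurewicz-style argument, adapted to the almost additive setting using the uniform control provided by bounded variation.

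First I would check that both quantities inside the suprema in (\ref{vp}) are well-defined. Almost additivity gives $\int \log \phi_{n+m}\, d\mu \geq \int \log \phi_n\, d\mu + \int \log \phi_m\, d\mu - C$ for every $\mu \in M(X,\sigma_X)$, so Fekete's lemma applied to $-\int \log\phi_n\, d\mu + C$ yields existence of $\lim_{n\to\infty}\frac{1}{n}\int \log \phi_n\, d\mu$. Applying Kingman's subadditive ergodic theorem to $\{-\log\phi_n + C\}$ gives existence of $\lim_{n\to\infty}\frac{1}{n}\log \phi_n(x)$ at $\mu$-a.e.\ $x$. Bounded variation combined with almost additivity produces a uniform bound $\big|\tfrac{1}{n}\log\phi_n(x)\big| \leq K$, so the equality of the two suprema in (\ref{vp}) follows from dominated convergence.

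For the inequality $h_\mu(\sigma_X) + \lim_n\tfrac{1}{n}\int \log\phi_n\, d\mu \leq P_X(\Phi)$, note that since $(X,\sigma_X)$ is a subshift, the partition $\mathcal{A}$ into $1$-cylinders is a generator, hence $h_\mu(\sigma_X) = \lim_n \tfrac{1}{n} H_\mu(\mathcal{A}^n)$ with $\mathcal{A}^n = \bigvee_{k=0}^{n-1}\sigma_X^{-k}\mathcal{A}$. For each $C\in\mathcal{A}^n$ one has $\int_C \log\phi_n\, d\mu \leq \mu(C)\sup_{x \in C}\log \phi_n(x)$, so the log-sum inequality $\sum_C p_C \log(a_C/p_C) \leq \log \sum_C a_C$ with $p_C = \mu(C)$ and $a_C = \sup_{x \in C}\phi_n(x)$ yields $H_\mu(\mathcal{A}^n) + \int \log\phi_n\, d\mu \leq \log \sum_{C \in \mathcal{A}^n} \sup_{x \in C}\phi_n(x)$. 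Dividing by $n$ and letting $n \to \infty$ gives the upper bound.

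For the reverse inequality I would construct an almost-maximizing measure. Set $Z_n := \sum_{C \in \mathcal{A}^n}\sup_{x \in C}\phi_n(x)$, choose $x_C\in C$ for each $C \in \mathcal{A}^n$, and define $\nu_n := Z_n^{-1}\sum_C \sup_{x \in C}\phi_n(x)\,\delta_{x_C}$ and $\mu_n := \tfrac{1}{n}\sum_{k=0}^{n-1}(\sigma_X^k)_* \nu_n$. Take a weak-$*$ convergent subsequence $\mu_{n_j}\to \mu$; then $\mu$ is $\sigma_X$-invariant. Bounded variation gives $\phi_n(x_C) \geq M^{-1}\sup_{x \in C}\phi_n(x)$, and a direct computation then shows $H_{\nu_n}(\mathcal{A}^n) + \int \log\phi_n\, d\nu_n = \log Z_n + O(1)$. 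The standard entropy concatenation argument (decomposing $\mathcal{A}^{jm}$ into $j$ consecutive blocks of length $m$ and exploiting subadditivity and concavity of entropy) transfers this identity to the lower bound $h_\mu(\sigma_X) + \lim_j \tfrac{1}{n_j}\int\log\phi_{n_j}\, d\mu_{n_j} \geq P_X(\Phi)$, provided we can identify the second term in the limit.

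The main obstacle is precisely this last identification: since $\log\phi_n$ is not a fixed continuous function, weak-$*$ convergence $\mu_{n_j} \to \mu$ does not directly yield convergence of $\tfrac{1}{n_j}\int \log\phi_{n_j}\, d\mu_{n_j}$ to $\lim_k\tfrac{1}{k}\int \log\phi_k\, d\mu$. The almost additive inequality $\log\phi_{jm}(x) \leq \sum_{i=0}^{j-1}\log\phi_m(\sigma_X^{im}x) + jC$ lets us sandwich $\tfrac{1}{jm}\log\phi_{jm}$ between Birkhoff averages of the genuinely continuous function $\tfrac{1}{m}\log\phi_m$ up to an error of order $C/m$, and this approximation---which uses almost additivity and bounded variation in an essential way---is what rescues the Misiurewicz scheme in this generalized setting.
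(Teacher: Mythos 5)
Your proposal is essentially correct, but note that the paper contains no proof of this statement to compare against: Theorem \ref{BM} is imported verbatim from Barreira \cite{B2006} and Mummert \cite{m2006}, and the remark immediately following it defers the general-subshift version to Theorem 1.1 of Cao--Feng--Huang \cite{CFH}. What you have written is, in substance, the \cite{CFH} Misiurewicz-type argument specialized to subshifts (cylinder partitions in place of $(n,\epsilon)$-separated sets), rather than the specification-based Gibbs-measure construction that \cite{B2006, m2006} use for mixing shifts of finite type. Your route is the more elementary and more general one, which is exactly why the paper's remark can drop the mixing/SFT hypotheses; the Barreira--Mummert route buys more (existence and uniqueness of the Gibbs equilibrium state, i.e.\ Theorem \ref{gibbs0}) but needs specification. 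All the key steps in your sketch check out: the upper bound via the generating partition and the log-sum inequality; the lower bound via $\nu_n$, $\mu_n$, and the entropy-concatenation estimate (unproblematic here since cylinders are clopen, so $H_{\mu_{n_j}}(\mathcal{A}^m)\to H_\mu(\mathcal{A}^m)$ under weak-$*$ convergence); and, most importantly, you correctly isolate the genuine difficulty --- identifying $\lim_j \frac{1}{n_j}\int\log\phi_{n_j}\,d\nu_{n_j}$ --- and resolve it by the offset-averaged subadditive decomposition $\log\phi_n \leq \sum_i \log\phi_m\circ\sigma_X^{im} + (n/m)C + O_m(1)$, which reduces everything to weak-$*$ convergence against the fixed continuous function $\frac{1}{m}\log\phi_m$ followed by $m\to\infty$.

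Two small corrections. First, the uniform bound $|\frac{1}{n}\log\phi_n|\leq K$ does not need bounded variation: iterating almost additivity with $m=1$ gives $e^{-(n-1)C}\prod_{i}\phi_1(\sigma_X^i x)\leq\phi_n(x)\leq e^{(n-1)C}\prod_i\phi_1(\sigma_X^i x)$, and $\phi_1$ is continuous, positive, hence bounded away from $0$ and $\infty$ on the compact set $X$; so $K=C+\|\log\phi_1\|_\infty$ works. Bounded variation enters only once, in the lower bound, through $\phi_n(x_C)\geq M^{-1}\sup_{x\in C}\phi_n(x)$, which is precisely where you invoke it. Second, when you conclude $H_{\nu_n}(\mathcal{A}^n)+\int\log\phi_n\,d\nu_n=\log Z_n+O(1)$, the clean statement is the one-sided bound $\geq \log Z_n-\log M$, which is all the lower bound requires; the matching upper bound is just the inequality you already proved for arbitrary measures and is not needed.
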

\begin{remark}
The conditions that the subshift be of finite type and be topologically mixing \cite{B2006,m2006}
are not necessary, the same result holds for 
general subshifts (see Theorem 1.1 \cite{CFH}).
\end{remark}

On the other hand, the topological pressure $P_X(\Phi)$ of a subadditive potential $\Phi=\{\log \phi_n\}_{n=1}^{\infty}$ on a subshift 
$(X, \sigma_X)$ was studied 
by Cao, Feng and Huang \cite{CFH} and a variational principle was also shown.  Let $n\in \N$ and $\epsilon>0$. 
A subset $E$ of $X$ is an {\em $(n, \epsilon)$ separated subset} of 
$X$ if $\max_{0\leq i\leq n-1}d(\sigma_X^{i}x, \sigma_X^{i}y)>\epsilon$ for all $x, y \in E, x\neq y$.
For a subadditive potential $\Phi$ on $X$, define
$$P_n(\Phi, \epsilon)=\sup\{\sum_{x\in E}\phi_n(x):E \text { is an } (n, \epsilon) \text{ separated subset of } X\},$$
and let $P(\Phi, \epsilon)=\limsup_{n\rightarrow \infty}({1}/{n})\log P_{n}(\Phi,\epsilon).$ The topological pressure for   
a subadditive potential $\Phi$ is defined by $P_X(\Phi)=\lim_{\epsilon\rightarrow 0}P(\Phi, \epsilon).$  
Then (\ref{vp}) in Theorem \ref{BM} holds if an almost additive potential is replaced by a subadditive potential \cite{CFH}.  

%We remark in Theorem \ref{BM} that $\phi:Y\rightarrow \R$ defined by $\phi(y)=\lim_{n\rightarrow\infty}\frac{1}{n}\log 
%\phi_n(y)$ is Borel measurable. 

%On the other hand, the topological pressure $P_X(\Phi)$ of a subadditive potential $\Phi$ on $X$ was studied 
%by Cao, Feng and Huang
%\cite{CFH} and a variational principle was also shown. 
%The next theorem extends fundamental results on pressure theory for continuous functions to those for subadditive potentials.   

%On the other hand, Cao, Feng and Huang \cite{CFH} studied the pressure theory for subadditive potentials.

%\begin{theorem}\label{CP}(Special case of Theorem 1.1 in \cite{CFH})  
%Let $\Phi=\{\log \phi_n\}^{\infty}_{n=1}$ be a subadditive potential on
%a subshift  $(X, \sigma_X)$. Let
%\begin{align*}
%\psi_n=&\sup\{\sum_{x\in E_n}\phi_n(x): E_n \text{ is a set consisting of exactly one point from each cylinder set }
%\\&\text {of length } n.\}.
%\end{align*} 
%Then the topological pressure $P_X(\Phi)$ is defined by 
%$$P_X(\Phi)= \limsup_{n \rightarrow
%\infty}\frac{1}{n}\log \psi_n,$$
%and (\ref{vp}) in Theorem \ref{BM} holds if an almost additive potential $\Phi$ is replaced by a subadditive potential. 
%\begin{equation*}
%P_X(\Phi)=
%\sup\{h_{\mu}(\sigma_X)+\lim_{n \rightarrow
%\infty}\frac{1}{n}\int\log \phi_n d\mu:\mu\in
%M(X,\sigma_X), \lim_{n \rightarrow
%\infty}\frac{1}{n}\int\log \phi_n d\mu \neq -\infty \},
%\end{equation*}
%and  $P_X(\Phi)=-\infty$ if $\lim_{n\rightarrow\infty}(1/n)\int \log \phi_n d\mu=-\infty$ 
%for all  $\mu\in M(X,\sigma_X)$.\\
%\end{theorem}

Feng and Huang \cite{FH} also considered sequences of continuous functions called asymptotically subadditive potentials, which are a 
generalization of subadditive potentials and almost additive potentials. 
A sequence of continuous functions $\Phi=\{\log \phi_n\}_{n=1}^{\infty}$ on a subshift $(X, \sigma_X)$ is 
an {\em asymptotically subadditive} if
for any $\epsilon>0$ there exists a subadditive potential $\Psi=\{\log \psi_n\}_{n=1}^{\infty}$ on $X$ such that 
$\limsup_{n\rightarrow\infty} (1/n) \sup_{x\in X} \vert \log \phi_n(x)-\log \psi_n(x)\vert \leq \epsilon$. \label{defas} 
%Now let $n\in \N$ and $\delta>0$. A subset $E$ of $X$ is an {\em $(n, \delta)$ separated subset} of 
%$X$ if $\max_{0\leq i\leq n-1}d(\sigma_X^{i}x, \sigma_X^{i}y)>\delta$ for all $x, y \in E, x\neq y$.
%For a subadditive potential $\Phi$, define
%$$P_n(\sigma_X, \Phi, \delta)=\sup\{\sum_{x\in E}\phi_n(x):E \text { is an } (n, \delta) \text{ separated subset of } X\},$$
%Define $P(\sigma_X, \Phi, \delta)=\limsup_{n\rightarrow \infty}\frac{1}{n}\log P_{n}(\sigma_X, \Phi,\delta).$ The topological entropy for   
%$\Phi$ is defined by $P(\sigma_X, \Phi)=\lim_{\delta\rightarrow 0}P(\sigma_X, \pi,\Phi).$  

The topological pressure of an  asymptotically subadditive potential is defined in the same manner as it is defined 
for a subadditive potential.
%the variational principle also holds (see \cite{FH}).
Then (\ref{vp}) in Theorem \ref{BM} is valid when we replace an almost additive potential  
by an asymptotically subadditive potential \cite{FH}. We note that an almost additive potential 
$\Phi=\{\log \phi_n\}_{n=1}^{\infty}$ on $X$ satisfying 
$e^{-C}\phi_n(x)\phi_m(\sigma^n_Xx)\leq \phi_{n+m}(x)\leq e^{C}\phi_n(x)\phi_m(\sigma^n_Xx)$
is an asymptotically subadditive potential on $X$ by
setting a subadditive potential   
$\Psi=\{\log \phi_ne^{C}\}_{n=1}^{\infty}$ in the above equation. 

\begin{definition}\label{defeq}
Let $\Phi=\{\log \phi_n\}_{n=1}^{\infty}$ be a sequence of continuous functions on $X$. 
A $\bar \mu \in M(X, \sigma_X)$ is an {\em equilibrium state} for 
$\Phi$ if 
$$h_{\bar \mu}(\sigma_X)+ \lim_{n \rightarrow
\infty}\frac{1}{n}\int \log \phi_n d\bar\mu =\sup_{\mu\in M(X, \sigma_X)}\{h_{\mu}(\sigma_X)+\lim_{n \rightarrow
\infty}\frac{1}{n}\int\log \phi_n d\mu\}.$$
\end{definition}
Similarly, $\bar \mu\in M(X, \sigma_X)$ is an equilibrium state for 
a Borel measurable function $f$ on $X$  if 
$$h_{\bar \mu}(\sigma_X)+ \int f d\bar\mu 
=\sup_{\mu\in M(X, \sigma_X)}\{h_{\mu}(\sigma_X)+\int f d\mu\}.$$
We denote by $M_{\Phi}(X, \sigma_X)$ the set of 
equilibrium states for $\Phi$.

The definition of a Gibbs measure can be extended to a sequence of continuous functions. 

\begin{definition}\label{defgb}
Let $\Phi=\{\log \phi_n\}^{\infty}_{n=1}$ be an asymptotically subadditive  
potential on a subshift $(X,\sigma_X)$. A Borel probability measure $\mu$ on $X$ is
a {\em Gibbs measure} for $\Phi$ if there exists $C_0>0$ such
that
\begin{equation}\label{gp}
\frac{1}{C_0}<\frac{\mu([x_1 x_2 \cdots x_n])}{e^{-nP_X(\Phi)}\phi_n(x)}<C_0
\end{equation}
for every $x \in X$ and $n\in \N$.
\end{definition}

The next theorem is used throughout the paper.
\begin{theorem}\cite{B2006, m2006}\label{gibbs0}
Let $(X, \sigma_X)$ be a subshift with the specification property. 
Then there exists a unique invariant Gibbs measure for an almost additive potential $\Phi$ on $X$ with bounded variation and it is the
unique equilibrium state for $\Phi$. It is also mixing.
\end{theorem}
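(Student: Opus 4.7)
The plan is to follow the classical Bowen-type construction of equilibrium measures, adapted from the additive to the almost additive setting, and then to verify the Gibbs bounds, from which uniqueness, the equilibrium-state property, and mixing can all be deduced. First I would construct a candidate measure $\mu$ as a weak$^*$ limit of natural finite-dimensional approximants; then use specification together with bounded variation and almost additivity to establish the two-sided Gibbs bound (\ref{gp}); and finally use this bound as a single tool to derive the remaining properties.

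For the construction, for each $n\in\N$ choose one point $x_w$ in each nonempty cylinder $[w]$, $w=i_1\cdots i_n \in B_n(X)$, and set
\[
\mu_n \;=\; \frac{1}{Z_n}\sum_{w\in B_n(X)} \phi_n(x_w)\,\delta_{x_w},\qquad Z_n=\sum_{w\in B_n(X)}\phi_n(x_w).
\]
By the pressure formula in Theorem \ref{BM}, $\tfrac1n\log Z_n \to P_X(\Phi)$. Averaging under $\sigma_X$ and passing to a weak$^*$ subsequential limit produces a $\sigma_X$-invariant Borel probability measure $\mu$ on $X$.

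Verifying the Gibbs property is the heart of the argument and the main obstacle. Fix a cylinder $[x_1\cdots x_n]$ and consider $\mu_m([x_1\cdots x_n])$ for $m\gg n$. Using the specification property with gap $p$, every allowable $m$-word $i_1\cdots i_m$ that meets $[x_1\cdots x_n]$ can be built by prescribing the first $n$ coordinates to be $x_1\cdots x_n$, bridging by an allowable word of length $\le p$, and then appending any allowable tail of length $m-n-p$; bounded variation with constant $M$ and almost additivity with constant $C$ then give
\[
e^{-C'}\,\phi_n(x)\cdot \!\!\!\sum_{v\in B_{m-n-p}(X)}\!\!\!\phi_{m-n-p}(x_v) \;\le\; \sum_{w\;\text{starts with}\; x_1\cdots x_n}\!\!\!\!\phi_m(x_w) \;\le\; e^{C'}\,\phi_n(x)\!\!\!\sum_{v\in B_{m-n-p}(X)}\!\!\!\phi_{m-n-p}(x_v),
\]
where $C'$ depends only on $C$, $M$, and $p$. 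Dividing by $Z_m$, using again the pressure formula to compare with $e^{(m-n-p)P_X(\Phi)}/e^{mP_X(\Phi)}$, and letting $m\to\infty$ along the averaging subsequence yields
\[
C_0^{-1}\;\le\;\frac{\mu([x_1\cdots x_n])}{e^{-nP_X(\Phi)}\phi_n(x)}\;\le\;C_0
\]
for every $x\in[x_1\cdots x_n]$, uniformly in $n$. The real delicacy is checking that the almost additivity constant $C$, the bounded variation constant $M$, and the specification gap $p$ combine into a single constant $C_0$ that does not degrade as $n$ grows; this is where the almost additive setting is genuinely more subtle than the additive one.

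Once the Gibbs bound is in hand, the rest is standard. Any invariant Gibbs measure $\nu$ for $\Phi$ satisfies (\ref{gp}) with some constant, so $\mu$ and $\nu$ are mutually absolutely continuous with bounded Radon--Nikodym derivative; a specification-based argument shows that $\mu$ is ergodic (correlations of cylinders decay uniformly via the same concatenation estimate), and therefore $\mu=\nu$. For the equilibrium property, the Gibbs bound gives $-\log \mu([x_1\cdots x_n]) = nP_X(\Phi) - \log \phi_n(x) + O(1)$; integrating against $\mu$, dividing by $n$, and letting $n\to\infty$ using the Shannon--McMillan--Breiman theorem for $\mu$ and the almost additive ergodic theorem yields $h_\mu(\sigma_X)+\lim_{n\to\infty}\tfrac1n\int \log\phi_n\,d\mu = P_X(\Phi)$, so $\mu$ attains the supremum in Theorem \ref{BM}. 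Any other equilibrium state is ergodic (by ergodic decomposition and affinity of the free-energy functional) and can be shown to satisfy a Gibbs bound by a standard Rokhlin-partition argument, hence coincides with $\mu$. Mixing follows from estimating $\mu([A]\cap \sigma_X^{-(|A|+p+k)}[B])$ by the same specification-plus-Gibbs concatenation used above, which gives a uniform comparison with $\mu([A])\mu([B])$ for all $k\ge 0$.
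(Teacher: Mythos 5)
Your overall strategy is the same one the paper relies on: Theorem \ref{gibbs0} is quoted from \cite{B2006, m2006}, and the paper reproduces exactly this method in Section \ref{sectionfactor} when it proves the parallel statement, Proposition \ref{gibbs1} (approximating measures supported on one point per cylinder and weighted by $\phi_n$; specification-driven super- and sub-multiplicativity of the partition sums $Z_n$, giving the uniform comparison $K_1\le e^{nP_X(\Phi)}/Z_n\le K_2$ of Lemma \ref{step2}; uniform Gibbs bounds for the approximants; a weak$^*$ limit followed by Ces\`aro averaging; and a correlation estimate for cylinders). Your construction, your concatenation estimate, and your deduction that any two invariant Gibbs measures are mutually absolutely continuous and hence equal once one of them is ergodic, all line up with Lemmas \ref{step2}--\ref{ergodic} and would go through.

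There are, however, two genuine gaps in the last third of your argument. First, uniqueness of the \emph{equilibrium state} (not just of the invariant Gibbs measure) is not handled correctly: the assertion that ``any other equilibrium state is ergodic by ergodic decomposition and affinity'' is false as stated --- ergodic decomposition only tells you that almost every ergodic component of an equilibrium state is again an equilibrium state, so the correct reduction is to ergodic equilibrium states, not to ergodicity of an arbitrary one. More seriously, the claim that an ergodic equilibrium state ``can be shown to satisfy a Gibbs bound by a standard Rokhlin-partition argument'' is assuming the hard part: equilibrium states are not a priori Gibbs, and the actual argument (Bowen's, as adapted in \cite{B2006, m2006}) never establishes a Gibbs bound for the competitor $\nu$; instead it shows that an ergodic equilibrium state $\nu\neq\mu$ would be singular with respect to $\mu$, covers a $\nu$-full, $\mu$-null set by cylinders, and uses the Gibbs property of $\mu$ together with an entropy--counting estimate to contradict $h_\nu(\sigma_X)+\lim_n\frac1n\int\log\phi_n\,d\nu=P_X(\Phi)$. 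Second, your mixing step does not prove mixing: a uniform two-sided comparison of $\mu([A]\cap\sigma_X^{-t}[B])$ with $\mu([A])\mu([B])$ gives ergodicity (this is precisely what Lemma \ref{ergodic} extracts from the lower bound alone), but mixing requires the ratio to converge to $1$, which does not follow from boundedness; the paper itself defers this point to the arguments of Theorem 5 of \cite{B2006} rather than to the concatenation estimate. Everything up to and including the uniqueness of the invariant Gibbs measure and the identity $h_\mu(\sigma_X)+\lim_n\frac1n\int\log\phi_n\,d\mu=P_X(\Phi)$ is sound; these two final steps need the genuine arguments from \cite{B2006, m2006}.
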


Let $(X, \sigma_X)$ and $(Y, \sigma_Y)$ be subshifts and  
let $\pi:X\rightarrow Y$ be a factor map between subshifts.
The main goal of this paper is to study $\pi\mu$ when 
$\mu\in M(X, \sigma_X)$  
is a unique invariant Gibbs measure for an almost additive 
potential. For this purpose, in Sections \ref{sectionrelativepressure} and  
\ref{sectionpreimage}, we will use relative pressure theory. 
Relative pressure for continuous functions and the relative variational principle (see \cite{LW, W2}) were extended to 
subadditive potentials under a certain condition (see \cite{ZC, Y3}). Here we  state 
some basic results from \cite{ZC, Y3} that we need.  \\

Let $(X, \sigma_X), (Y, \sigma_Y)$ be subshifts and  
$\pi:X\rightarrow Y$ be a factor map.\label{defirp}
Let $\Phi=\{\log \phi_n\}_{n=1}^{\infty}$ be a subadditive potential on $(X, \sigma_X)$. 
Let $n\in \N$ and $\epsilon>0$.
% A subset $E$ of $X$ is an {\em $(n, \delta)$ separated subset} of 
%$X$ if $\max_{0\leq i\leq n-1}d(\sigma_X^{i}x, \sigma_X^{i}y)>\delta$ for all $x, y \in E, x\neq y$. 
For each $y\in Y$, define
$$P_n(\sigma_X, \pi, \Phi, \epsilon)(y)=\sup\{\sum_{x\in E}\phi_n(x):E \text { is an } (n, \epsilon) \text{ separated subset of } \pi^{-1}\{y\}\},$$
$$P(\sigma_X, \pi, \Phi, \epsilon)(y)=\limsup_{n\rightarrow \infty}\frac{1}{n}\log P_{n}(\sigma_X, \pi, \Phi,\epsilon)(y),$$
$$P(\sigma_X, \pi, \Phi)(y)=\lim_{\epsilon\rightarrow 0}P(\sigma_X, \pi,\Phi, \epsilon)(y).$$
The definitions above are a generalization of the usual definitions for the relative pressure 
for continuous functions (see \cite{W2}) and $P(\sigma_X, \pi, \Phi):Y\rightarrow [-\infty, \infty)$ is Borel measurable.
 
Suppose that $\Phi$ has bounded variation. 
In addition, suppose there exists $C>0$ such that $e^{-C}\phi_n(x)\phi_m(\sigma^{n}_X x)\leq 
\phi_{n+m}(x)$ or
$\phi_n(x)$ depends on the first $n$ coordinates of
$x\in X$. 
Then by the proof of Theorem~3.4 in~\cite{Y3}, we have 
\begin{equation}\label{nicef}
P(\sigma_X, \pi, \Phi)(y)=\limsup_{n\rightarrow\infty}\frac{1}{n}\log 
\big (\sum_{x\in D_n(y)}\phi_n(x)\big),
\end{equation}
where $D_n(y)$ is a set consisting of one point from each nonempty set  
$\pi^{-1}(y)\cap [x_1\dots x_n]$ in $X$.

\begin{theorem}\cite{ZC}(A special case of the relative variational principle)\label{rvpforsub}\\
Let $(X,\sigma_X), (Y, \sigma_Y)$ be subshifts, $ \pi:X\rightarrow Y$ a factor map and
$\Phi=\{\log \phi_n\}_{n=1}^{\infty}$ a subadditive potential on $X$.
Then for each $m\in M(Y, \sigma_Y)$ such that there exists  $\mu\in M(X, \sigma_X)$ with $\pi\mu=m$ and $\lim_{n\rightarrow \infty}(1/n)\int\log \phi_{n}d\mu\neq -\infty$,
\begin{equation*}
\int_{Y} P(\sigma_X, \pi, \Phi)dm=\sup\{h_{\mu}(\sigma_X)- h_{m}(\sigma_Y)+\lim_{n\rightarrow \infty}\frac{1}{n}\int_{X} \log \phi_n d\mu : \mu\in M(X, \sigma_X) \text{ and }
\pi\mu=m\}.
\end{equation*}
\end{theorem}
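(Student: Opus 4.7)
The plan is to adapt the Ledrappier--Walters proof of the classical relative variational principle for continuous potentials to the subadditive setting, establishing the $\le$ and $\ge$ inequalities separately. The extra ingredient beyond the classical argument is Kingman's subadditive ergodic theorem, which replaces the Birkhoff identity $\int S_nf\,d\mu = n\int f\,d\mu$ and guarantees that $\lim_n (1/n)\int \log\phi_n\,d\mu$ exists and equals $\inf_n (1/n)\int\log\phi_n\,d\mu$. Throughout, $\xi_n$ denotes the partition of $X$ into allowable $n$-cylinders and $\zeta_n$ the corresponding partition of $Y$; since $\pi$ is one-block, $\xi_n$ refines $\pi^{-1}\zeta_n$, which yields $H_\mu(\xi_n\mid \pi^{-1}\zeta_n)=H_\mu(\xi_n)-H_m(\zeta_n)$ and in the $n\to\infty$ limit the fiber entropy $h_\mu(\sigma_X)-h_m(\sigma_Y)$.

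For the upper bound, fix $\mu\in M(X,\sigma_X)$ with $\pi\mu=m$. The log-sum inequality applied fiber-by-fiber (with weights $\mu(A)/m(B)$ for $A\in\xi_n$ sitting inside the $\pi^{-1}\zeta_n$-atom $B=\pi^{-1}(\zeta_n(y))$ and values $\sup_A\phi_n$), together with the trivial majorisation $\int\log\phi_n\,d\mu\le\sum_A\mu(A)\log\sup_A\phi_n$, gives
\begin{equation*}
H_\mu(\xi_n\mid\pi^{-1}\zeta_n)+\int\log\phi_n\,d\mu \;\le\; \int_Y \log\Big(\sum_{A\subset\pi^{-1}(\zeta_n(y))}\sup_{x\in A}\phi_n(x)\Big)\,dm(y).
\end{equation*}
For $\epsilon<1/2$, any transversal picking one point per $n$-cylinder is an $(n,\epsilon)$-separated subset of $\pi^{-1}(y)$, so the inner sum is bounded above by $P_n(\sigma_X,\pi,\Phi,\epsilon)(y)$. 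Dividing by $n$, taking $\limsup$, and applying reverse Fatou to push the $\limsup$ past the integral against $m$ yields $h_\mu(\sigma_X)-h_m(\sigma_Y)+\lim_n (1/n)\int\log\phi_n\,d\mu\le \int_Y P(\sigma_X,\pi,\Phi,\epsilon)\,dm$, and $\epsilon\to 0$ completes this direction.

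For the lower bound, fix $\epsilon>0$ and invoke Kuratowski--Ryll-Nardzewski on the Borel multifunction $y\mapsto\pi^{-1}(y)$ to measurably select an $(n,\epsilon)$-separated set $E_n(y)\subset\pi^{-1}(y)$ whose weighted mass $Z_n(y)=\sum_{x\in E_n(y)}\phi_n(x)$ approximates $P_n(\sigma_X,\pi,\Phi,\epsilon)(y)$ up to a factor $(1-\delta)$. Build the fiber measure $\nu_{n,y}=Z_n(y)^{-1}\sum_x\phi_n(x)\delta_x$, symmetrise via $\mu_{n,y}=(1/n)\sum_{j=0}^{n-1}(\sigma_X^j)_*\nu_{n,y}$, and set $\mu_n=\int_Y\mu_{n,y}\,dm(y)$; because $\pi\nu_{n,y}=\delta_y$ and $m$ is $\sigma_Y$-invariant, $\pi\mu_n=m$ exactly. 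Any weak-$*$ limit $\mu$ of a subsequence is automatically $\sigma_X$-invariant with $\pi\mu=m$. The direct calculation
\begin{equation*}
H_{\nu_{n,y}}(\xi_n)+\int\log\phi_n\,d\nu_{n,y}=\log Z_n(y)
\end{equation*}
integrated against $m$ and divided by $n$ tends to $\int P(\sigma_X,\pi,\Phi,\epsilon)\,dm$. A Misiurewicz-style comparison between $H_{\mu_n}(\xi_n)/n$ and $h_\mu(\sigma_X,\xi)$ (splitting $n=qk+r$ with $k$ fixed and $q\to\infty$) transfers this value to $h_\mu(\sigma_X)-h_m(\sigma_Y)+\lim_k (1/k)\int\log\phi_k\,d\mu$, after which $\epsilon\to 0$ closes the argument.

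The principal obstacle is precisely this last transfer. Non-additivity of $\log\phi_n$ prevents the step $(1/n)\int\log\phi_n\,d\mu_n\to\int f\,d\mu$ used in the continuous case, because $\log\phi_n$ is not a Birkhoff sum of a fixed function. The workaround is to introduce a secondary scale $k\ll n$, use weak-$*$ continuity of the \emph{single} continuous function $\log\phi_k$ to identify $\int\log\phi_k\,d\mu=\lim_n \int\log\phi_k\,d\mu_n$, and couple to $\log\phi_n$ via the subadditive inequality $\log\phi_n(x)\le\sum_{i=0}^{\lfloor n/k\rfloor-1}\log\phi_k(\sigma_X^{ik}x)+R_{n,k}(x)$ with $R_{n,k}$ of order $k$; Kingman then identifies $\lim_k(1/k)\int\log\phi_k\,d\mu=\inf_k(1/k)\int\log\phi_k\,d\mu$. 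The standing hypothesis that $\lim_n(1/n)\int\log\phi_n\,d\bar\mu\ne-\infty$ for at least one $\bar\mu$ with $\pi\bar\mu=m$ is what keeps the variational expression finite and ensures that the constructed limit $\mu$ contributes a meaningful value to the supremum.
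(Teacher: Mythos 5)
First, a point of comparison: the paper does not prove this theorem at all --- it is quoted from Zhao and Cao \cite{ZC} --- so there is no internal proof to measure your argument against. Your overall strategy (Ledrappier--Walters adapted via Kingman, with the two-scale subadditive coupling to handle the non-additivity of $\log\phi_n$ in the limit-measure step) is the right one and is essentially the strategy of the cited source; your lower-bound outline is sound modulo the routine measurable-selection and concavity-of-entropy details you gesture at.

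The upper bound, however, has a genuine gap at the step ``any transversal picking one point per $n$-cylinder is an $(n,\epsilon)$-separated subset of $\pi^{-1}(y)$, so the inner sum is bounded above by $P_n(\sigma_X,\pi,\Phi,\epsilon)(y)$.'' The sum produced by your log-sum inequality runs over all $n$-cylinders $A$ with $\pi(A)\subseteq[y_1\dots y_n]$, i.e.\ over all allowable words whose image merely \emph{begins} with $y_1\dots y_n$. A transversal of these cylinders is $(n,\epsilon)$-separated in $X$, but it is not a subset of the exact fiber $\pi^{-1}\{y\}$: a point of such an $A$ maps to some point of the cylinder $[y_1\dots y_n]$, not to $y$ itself, and $A$ may contain no preimage of $y$ whatsoever. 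Since $P_n(\sigma_X,\pi,\Phi,\epsilon)(y)$ is by definition a supremum over separated subsets of $\pi^{-1}\{y\}$, your inequality compares a sum over a potentially much larger collection of cylinders with a supremum taken inside a much smaller set, and it does not follow; for general subshifts the discrepancy between ``cylinders meeting the fiber'' and ``cylinders mapping into $[y_1\dots y_n]$'' is exactly the delicate issue treated by Petersen--Shin (cf.\ Lemma \ref{impPS} and the extra hypotheses the paper imposes before asserting (\ref{nicef})). The standard repair --- and what Ledrappier--Walters and Zhao--Cao actually do --- is to condition on the full $\sigma$-algebra $\pi^{-1}(\mathcal{B}_Y)$ rather than on $\pi^{-1}\zeta_n$: disintegrating $\mu=\int\mu_y\,dm(y)$ with $\mu_y$ carried by $\pi^{-1}\{y\}$, only cylinders charged by $\mu_y$, hence meeting the fiber, enter the entropy count, and one then compares coverings of the compact fiber with $(n,\epsilon)$-separated subsets of it. A secondary point: since no bounded-variation hypothesis is made on $\Phi$ here, you must also replace $\sup_{A}\phi_n$ by values at points of $A\cap\pi^{-1}\{y\}$, e.g.\ by refining to $(n+k)$-cylinders and letting $k\to\infty$. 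Until this fiber-localization is carried out, the $\le$ direction is not established.
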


Under the assumptions of Theorem \ref{rvpforsub}, 
$\bar \mu\in M(X, \sigma_X)$ is a {\em relative equilibrium state } for $\Phi$ over $m$ if $\pi \bar\mu=m$ and
\begin{align*} 
&h_{\bar \mu}(\sigma_X)-h_{m}(\sigma_Y)+\lim_{n\rightarrow \infty}\frac{1}{n}
\int \log \phi_n d\bar\mu\\&=\sup\{h_{\mu}(\sigma_X)-h_{m}(\sigma_Y)+\lim_{n\rightarrow \infty} 
\frac{1}{n}\int \log \phi_n d\mu  :\mu\in M(X,\sigma_X) \textnormal{ and }\pi\mu=m\}
\end{align*} 
(see \cite{W2}).
\label{desfre}\\

We will study the relation between $\mu$ and and $\pi\mu$ in Theorems~\ref{thm1} and~\ref{thm2} in Section~\ref{sectionpreimage}.

\section{Factors of generalized Gibbs measures for almost additive potentials}\label{sectionfactor}
In this section, we study the image of a unique invariant Gibbs measure for 
an almost additive potential with bounded variation under a factor map. 
We characterize it as a unique invariant Gibbs measure
for an asymptotically subadditive potential with bounded variation.  

Let $(X, \sigma_X)$ and $(Y, \sigma_Y)$ be subshifts and $\pi:X\rightarrow Y$ be a factor map.
Throughout this section, we shall take 
$\F=\{\log f_n\}_{n=1}^{\infty}$ to be an almost additive potential on $X$. Let $C>0$ be a constant   
such that  
\begin{equation}\label{aaconst}
e^{-C}f_n(x)f_m(\sigma^n_Xx)\leq f_{n+m}(x)\leq  e^{C}f_n(x)f_m(\sigma^n_Xx), \text{ for all } x\in X, m, n \in \N.
\end{equation}
%Such a constant exists because $\F$ is almost additive.
In addition, we assume that $\F$ has bounded variation and let $M>0$ be a constant such that  
\begin{equation}\label{bvforf}
\sup_{n\in\N}\{\frac{f_n(x)}{f_n(x')}: x_i=x'_i \text { for all } 1\leq i \leq n\}\leq M. 
\end{equation}
Such a constant exists because $\F$ has bounded variation.
For all $n\in \N, y=(y_1,\dots, y_n,\dots)\in Y$, denote by $E_n(y)$ a set consisting of exactly one point from each cylinder  
$[x_1\dots x_n]$  such that  $\pi([x_1\dots x_n])\subseteq [y_1\dots y_n]$. Define
\begin{equation*}
g_n(y)= \sup_{E_n(y)}\{\sum_{x \in E_n(y)} f_n(x)\}.
\end{equation*}
We note that if $(X, \sigma_X)$ is irreducible, then $E_n(y)$ is a set consisting of exactly one point from 
each cylinder $[x_1\dots x_n]$ such that $\pi(x_1\dots x_n)=y_1\dots y_n$. 
Define  $\tilde{g}_n(y)=g_n(y)e^{-nP_X(\F)}$.
Let $\G=\{\log g_n\}_{n=1}^{\infty}$ and 
$\widetilde \G=\{\log \tilde{g}_n \}_{n=1}^{\infty}$.
Also define a sequence of continuous functions $\H=\{\log \tilde {g}_ne^{C}\}_{n=1}^{\infty}$ on $Y$. 
%where $C>0$ is defined  in equation (\ref{aaconst}).   
We shall continue to use this notation throughout the rest of this section.

We recall from Theorem \ref{gibbs0} that if $\F$ is an almost additive potential with bounded variation, 
then there is a unique invariant Gibbs measure for $\F$ and it is the unique equilibrium state for $\F$. 

The main goal of this section is to prove the following theorem. 

\begin{theorem}\label{main1}
Let $(X,\sigma_X)$, $(Y,\sigma_Y )$ be subshifts
and $\pi:X \rightarrow Y$ a factor map.  Suppose that $X$ has the specification property.
Let $\F=\{\log f_n\}_{n=1}^{\infty}$ be an almost additive potential on $X$ 
with bounded variation. Let $\mu\in M(X, \sigma_X)$ be the unique Gibbs measure for $\F$ and $\nu=\pi\mu\in M(Y, \sigma_Y)$.
Then $\nu$ is the unique invariant Gibbs measure for the asymptotically subadditive potential $\G=\{\log {g}_n\}_{n=1}
^{\infty}$ on $Y$ with bounded variation. 
It is the unique equilibrium state for $\G$ and it is mixing.  Then 
\begin{align}  
P_X(\F)&=\sup\{h_{\bar\mu}(\sigma_X)+\lim_{n\rightarrow\infty}\frac{1}{n}\int 
\log f_n d\bar\mu: \bar\mu\in M(X, \sigma_X)\} \label{eqimp1} \\ 
&=\sup\{h_{\bar\nu}(\sigma_Y)+\lim_{n\rightarrow\infty}\frac{1}{n}\int \log g_n d\bar\nu: 
\bar \nu\in M(Y, \sigma_Y)\}\\ 
&=P_Y(\G). \label{eqimp2}
\end{align}
\end{theorem}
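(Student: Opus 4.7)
The plan is to derive the Gibbs property for $\nu=\pi\mu$ directly from the Gibbs property for $\mu$, and then bootstrap this into all remaining conclusions. I would organize the proof into four steps: (i) verify that $\G$ has bounded variation and is asymptotically subadditive; (ii) establish Gibbs-type bounds for $\nu$ against $\G$ with constant $P_X(\F)$; (iii) sum these bounds over all $n$-blocks of $Y$ to identify $P_Y(\G)=P_X(\F)$ and obtain the variational identities; (iv) conclude mixing, uniqueness, and the equilibrium state statement.

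For step (i), bounded variation of $\G$ is immediate: since $\pi$ is a one-block factor map, $E_n(y)$ depends only on $y_1,\dots,y_n$, so $g_n$ is constant on each $n$-cylinder of $Y$ and $M_n=1$ for every $n$. For asymptotic subadditivity I would establish the pointwise estimate
\[
g_{n+m}(y)\leq e^{C} M^{2}\, g_n(y)\, g_m(\sigma_Y^n y).
\]
Given a near-optimal $E_{n+m}(y)$, each $x$ in it lies in a unique allowable cylinder $[w_1\dots w_n v_1\dots v_m]$ with $\pi(w_i)=y_i$ and $\pi(v_j)=y_{n+j}$. I apply the almost-additive upper bound $f_{n+m}(x)\leq e^{C}f_n(x)f_m(\sigma_X^n x)$, use \eqref{bvforf} to replace $f_n(x)$ and $f_m(\sigma_X^n x)$ by values at fixed representatives $y_w\in[w]$ and $z_v\in[v]$ (at cost $M^{2}$), and reorganize the double sum by the first block $w$: the inner sum over $v$ is bounded by $g_m(\sigma_Y^n y)$ and the outer sum over $w$ by $g_n(y)$. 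Setting $K=e^{C}M^{2}$, the sequence $\{\log(Kg_n)\}_{n=1}^\infty$ is then genuinely subadditive and differs from $\G$ by the fixed constant $\log K$, so $\G$ is asymptotically subadditive.

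For steps (ii)--(iii), write $\pi^{-1}[y_1\dots y_n]$ as the disjoint union of cylinders $[x_1\dots x_n]$ with $\pi(x_i)=y_i$, so $\nu([y_1\dots y_n])=\sum_{[x_1\dots x_n]}\mu([x_1\dots x_n])$. Combining the Gibbs bounds for $\mu$ with \eqref{bvforf} gives
\[
C_0^{-1}M^{-1}\,e^{-nP_X(\F)}g_n(y)\leq \nu([y_1\dots y_n])\leq C_0\, e^{-nP_X(\F)} g_n(y).
\]
Summing over $y_1\dots y_n\in B_n(Y)$ and using that $\nu$ is a probability measure yields
\[
(C_0 M)^{-1}\, e^{nP_X(\F)}\leq \sum_{y_1\dots y_n\in B_n(Y)} g_n\big|_{[y_1\dots y_n]} \leq C_0\, e^{nP_X(\F)}.
\]
Since $g_n$ is constant on $n$-cylinders and $\G$ is asymptotically subadditive, the definition of topological pressure via $(n,\epsilon)$-separated sets identifies $\lim_{n}(1/n)\log\sum g_n$ with $P_Y(\G)$, so $P_Y(\G)=P_X(\F)$. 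The displayed two-sided bounds on $\nu([y_1\dots y_n])$ are then exactly the Gibbs property of $\nu$ for $\G$ at pressure $P_Y(\G)$, and the variational principle of Feng--Huang for asymptotically subadditive potentials produces the chain of identities \eqref{eqimp1}--\eqref{eqimp2}.

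For step (iv), $\mu$ is mixing by Theorem \ref{gibbs0}, and mixingness passes to factors, so $\nu$ is mixing and ergodic. If $\nu'$ is a second invariant Gibbs measure for $\G$, the two-sided Gibbs bounds force the ratio $\nu([y_1\dots y_n])/\nu'([y_1\dots y_n])$ to lie in a fixed compact subinterval of $(0,\infty)$ for every cylinder; outer regularity extends this to all Borel sets, so $\nu$ and $\nu'$ are uniformly equivalent with bounded Radon--Nikodym derivative, and ergodicity of $\nu$ forces the derivative to be identically $1$. A standard computation combining the Gibbs bounds with the Shannon--McMillan theorem gives $h_\nu(\sigma_Y)+\lim_n(1/n)\int\log g_n\,d\nu=P_Y(\G)$, so $\nu$ is an equilibrium state for $\G$. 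The main obstacle I anticipate is promoting ``unique invariant Gibbs'' to ``unique equilibrium state'' since this implication is not automatic in the asymptotically subadditive setting; I plan to handle it by pulling any candidate ergodic equilibrium state $m$ on $Y$ back to an invariant measure on $X$ via the relative variational principle (Theorem \ref{rvpforsub}, applicable because $\F$ satisfies the lower almost-additive inequality) and then using that $\mu$ is the unique equilibrium state for $\F$ to conclude $m=\pi\mu=\nu$.
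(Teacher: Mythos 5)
Your steps (i)--(iii) and the mixing and uniqueness-of-Gibbs parts of step (iv) are sound and track the paper closely: bounded variation and asymptotic subadditivity of $\G$ are Lemma \ref{subd}, the two-sided bounds $\nu([y_1\dots y_n])\asymp e^{-nP_X(\F)}g_n(y)$ are exactly the computation in the proof of Theorem \ref{imp1}, and your trick of summing these bounds over $B_n(Y)$ and using $\nu(Y)=1$ to get $P_Y(\G)=P_X(\F)$ is a legitimate (and slightly slicker) alternative to the paper's direct comparison of the partition sums $G_n$ and $N_n$ in Proposition \ref{imp}. Deducing that $\nu$ is mixing because $\mu$ is mixing and mixing passes to factors is also simpler than the paper's route.

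The gap is in your last step, the promotion of ``unique invariant Gibbs measure'' to ``unique equilibrium state.'' Your plan is to take an arbitrary equilibrium state $m$ for $\G$, pull it back through the relative variational principle (Theorem \ref{rvpforsub}), and land on an equilibrium state for $\F$ on $X$. For this to produce $\bar\mu$ with $h_{\bar\mu}(\sigma_X)+\lim_{n}\frac{1}{n}\int\log f_n\,d\bar\mu=P_X(\F)$ you need $\int_Y P(\sigma_X,\pi,\F)\,dm=\lim_{n}\frac{1}{n}\int\log g_n\,dm$ for that particular $m$. Only the inequality $\leq$ is automatic: $P(\sigma_X,\pi,\F)(y)$ is computed from the sets $D_n(y)$, which take one point from each cylinder $[x_1\dots x_n]$ that actually meets $\pi^{-1}\{y\}$, whereas $g_n(y)$ sums over all cylinders whose $n$-word merely projects to $y_1\dots y_n$; for a general subshift the latter collection can be strictly larger, since a word projecting to $y_1\dots y_n$ need not extend to a preimage of the entire sequence $y$. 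The paper establishes the needed identity only when $X$ is a full shift (Lemma \ref{full}, which is precisely why Theorem \ref{general} is stated for full shifts), or in the additive shift-of-finite-type case via the Petersen--Shin Lemma \ref{impPS}. In the generality of Theorem \ref{main1} --- arbitrary subshifts with $X$ having specification --- your argument therefore does not close. The paper avoids the issue entirely: uniqueness of the equilibrium state comes from the self-contained construction in Proposition \ref{gibbs1} (Lemmas \ref{step2}--\ref{ergodic}), which builds the Gibbs measure for $\widetilde\G$ from the weighted atomic measures $\nu_n$ and runs the argument of Barreira's Theorem 5 on the subadditive potential $\H=\{\log\tilde g_ne^{C}\}_{n=1}^{\infty}$, using specification to obtain the supermultiplicative bound (\ref{keypro}). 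To repair your proof you would need either to restrict to full shifts or to import that construction.
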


In the special case that $\mu$ is a Gibbs measure associated to a single potential 
$f\in Bow(X)$, rather than being an almost additive potential, 
we obtain the following corollary.  

\begin{coro}\label{maincoro}
Let $(X,\sigma_X)$, $(Y,\sigma_Y )$ be subshifts and  
$\pi:X \rightarrow Y$ a factor map. Suppose that $X$ has the specification property.
Let $\mu\in M(X, \sigma_X)$ be the unique invariant Gibbs measure for $f\in Bow(X)$ and $\nu=\pi\mu\in M(Y, \sigma_Y)$.
Define $$\bar g_n(y)= \sup_{E_n(y)} \{\sum_{x \in E_n(y)}e^{f(x)+\dots+f(\sigma^{n-1}_Xx)}\}.$$
Then $\nu$ is the unique invariant Gibbs measure for the subadditive potential   
$\bar \G=\{\log {\bar g}_n\}_{n=1}^{\infty}$ on $Y$ with bounded variation. It is the unique equilibrium state for $\bar \G$ and it is mixing. Then 
\begin{align}\label{eqimp} 
P_X(f)&=\sup\{h_{\bar \mu}(\sigma_X)+\int f  d\bar \mu: \bar\mu\in M(X, \sigma_X)\}\\
&=\sup\{h_{\bar\nu}(\sigma_Y)+\lim_{n\rightarrow\infty}\frac{1}{n}\int \log \bar g_n d\bar\nu: 
\bar \nu\in M(Y, \sigma_Y)\}\\
&=P_Y(\bar \G).
\end{align}
\end{coro}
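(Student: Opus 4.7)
The plan is to deduce Corollary \ref{maincoro} from Theorem \ref{main1} applied to the additive sequence associated with $f$, and then upgrade ``asymptotically subadditive'' to ``subadditive''. First, I set $f_n(x) = e^{S_n f(x)}$ with $S_n f = \sum_{i=0}^{n-1} f \circ \sigma_X^i$; then $\F = \{\log f_n\}$ is additive, i.e.\ almost additive with constant $C = 0$ in \eqref{aaconst}. Since $f \in Bow(X)$ means $\sup_n V_n(S_n f) < \infty$, $\F$ has bounded variation. A Gibbs measure $\mu$ for $f$ in the classical sense is a Gibbs measure for $\F$ in the sense of Definition \ref{defgb}, because $e^{-nP_X(f)} f_n(x) = e^{S_n f(x) - n P_X(f)}$ and $P_X(\F) = P_X(f)$. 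Thus the hypotheses of Theorem \ref{main1} are met, so $\nu = \pi\mu$ is the unique invariant Gibbs measure on $Y$ for $\G = \{\log g_n\}$ with $g_n = \bar g_n$, the unique equilibrium state for $\bar\G$, and mixing, and $\bar\G$ has bounded variation.

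The only content beyond Theorem \ref{main1} is that $\bar\G$ is genuinely subadditive rather than merely asymptotically subadditive. This I verify directly. Because the supremum over choices of $E_n(y)$ can be taken cylinder by cylinder,
\[
\bar g_n(y) \;=\; \sum_{[a]\,:\,\pi([a]) \subseteq [y_1\cdots y_n]} \sup_{x \in [a]} e^{S_n f(x)},
\]
where the sum runs over non-empty cylinders over allowable $n$-blocks $a$. For any allowable $(n+m)$-block $ab$ with $|a|=n$, $|b|=m$ and $\pi(ab) \subseteq [y_1\cdots y_{n+m}]$, exact additivity of $S_n f$ gives
\[
\sup_{x \in [ab]} e^{S_{n+m} f(x)} \;\leq\; \Bigl(\sup_{x \in [a]} e^{S_n f(x)}\Bigr)\Bigl(\sup_{z \in [b]} e^{S_m f(z)}\Bigr),
\]
since $\sigma_X^n x \in [b]$ whenever $x \in [ab]$. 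Summing over all allowable concatenations $ab$ and enlarging the index set to all pairs $(a,b)$ of allowable blocks with the appropriate $\pi$-images (harmless, as every summand is non-negative) yields $\bar g_{n+m}(y) \leq \bar g_n(y)\,\bar g_m(\sigma_Y^n y)$, i.e.\ subadditivity of $\bar\G$.

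Finally, the variational identities \eqref{eqimp} follow at once from Theorem \ref{main1} combined with $\lim_{n\to\infty}(1/n)\int \log f_n\, d\bar\mu = \int f\, d\bar\mu$ for $\bar\mu \in M(X,\sigma_X)$, which is immediate from additivity and $\sigma_X$-invariance. The main substantive step is Theorem \ref{main1} itself; the only new work here is the subadditivity verification above, which is a short bookkeeping argument exploiting the fact that $C = 0$ in the additive case and presents no serious obstacle.
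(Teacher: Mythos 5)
Your proposal is correct and follows essentially the same route as the paper: the paper's proof likewise observes that $f\in Bow(X)$ makes $\F=\{\log f_n\}_{n=1}^{\infty}$ an additive potential with bounded variation, notes that $\bar\G$ is subadditive, and applies Theorem \ref{main1} directly. The only difference is that you spell out the subadditivity verification (correctly, via the cylinder-by-cylinder rewriting of $\bar g_n$ and exact additivity of the Birkhoff sums) where the paper dismisses it as ``easy to see.''
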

\begin{remark}\label{re1}
The potential $\G=\{\log g_n\}_{n=1}^{\infty}$ here is slightly different from the potential 
$\Psi=\{\log \psi_n\}_{n=1}^{\infty}$ found in Theorem 3.1 of \cite{BF}. $\Psi$ in \cite{BF} 
was studied by using the theory of relative pressure in a more general setting than that pursued 
here. The definition of $\psi_n (y)$ involves the entire sequence $y$ and, in general, $\psi_n$ is not a locally constant function. Our approach
has been to find $\G$ using the properties of the image of a Gibbs
measure and $g_n$ is a locally constant function. %that depends on the first $n$ coordinates of $y$. 
It seems that equalities (\ref{eqimp1})--(\ref{eqimp2}) could be obtained from Theorem 3.1 (i) \cite{BF} by replacing $\Psi$ by $\G$, 
but due to the difference in techniques required to prove the result, this does not work in general. However, we note that 
Theorem 3.5 \cite{BF}
deals with the special case in which $\pi$ is a factor map between full shifts: Barral and Feng use our $\G$ to study the image $\pi\mu(I)$ 
for a cylinder set $I$ of length $n$.  
\end{remark}
%this is mostlikely to be possible only in special cases
%generally speaking, this will not work.
%\begin{remark}
%The potential $\G$ is also found in Theorem 3.1 in Barral and Feng \cite{BF} by using theory of relative pressure under a more general setting. Here we find it by using the
%properties of the image of a Gibbs measure. (\ref{eqimp1}) is a special case of Theorem 3.1 (i) \cite{BF}. Here we have a different proof.
%\end{remark}

In statistical mechanics, non-Gibbsian measures have been often found to occur 
as images of Gibbs measures under Renormalization Group transformations.
The question of when this phenomenon occurs has been widely studied and possible generalizations of Gibbs 
measures have also been considered.
For example, see \cite{VFS, MMR, VE}.
Since the projections and Renormalization Group maps share some mathematical properties, the above theorem and 
corollary may be applicable to these areas.
Studying the continuity of 
the function $F_1(y)=\lim_{n\rightarrow\infty}(\log {g}_n)/n$ in Theorem \ref{main1}  and 
$F_2(y)=\lim_{n\rightarrow\infty}(\log {\bar g}_n)/n$ in Corollary \ref{maincoro}, will tell us
when the projection $\nu$ is an invariant (possibly weak) Gibbs measure for a continuous function.  

We stress that the image of the invariant Gibbs measure for a continuous function need not be an invariant 
Gibbs measure for a continuous function but may be for a sequence of continuous functions. 
In Example 4.1 \cite{K}, the image of the Gibbs measure for a H\"older  continuous 
function is not an invariant Gibbs measure for a continuous function defined in \cite{K}. However, by Corollary \ref{maincoro},
it is a unique invariant Gibbs measure for a subadditive potential. 
  
In order to prove Theorem \ref{main1}, we start with the following lemma.
In the following lemmas, propositions and theorems, we continue to use $\F, \G, \widetilde\G, \H, E_n(y)$ defined at the beginning
of this section.

\begin{lemma}\label{subd}
Let $(X,\sigma_X)$, $(Y,\sigma_Y )$ be subshifts and 
$\pi:X \rightarrow Y$ be a factor map. 
Then $\H=\{\log \tilde {g}_ne^{C}\}_{n=1}^{\infty}$ is a subadditive potential on $Y$ and so
 $\widetilde \G=\{\log \tilde{g}_n\}_{n=1}^{\infty}$ is an asymptotically
subadditive potential on $Y$.
\end{lemma}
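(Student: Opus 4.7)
The claim has two parts: subadditivity of $\H$, then the fact that $\widetilde{\G}$ is asymptotically subadditive follows immediately because $\H$ and $\widetilde{\G}$ differ by the constant $C$, so $(1/n)\sup_{y}|\log(\tilde g_n(y)e^C)-\log\tilde g_n(y)|=C/n\to 0$. Thus the only real work is verifying that $\H$ is subadditive on $Y$. Unwinding definitions, subadditivity of $\H$ is equivalent to the clean multiplicative statement
\begin{equation*}
g_{n+m}(y)\;\leq\;e^{C}\,g_n(y)\,g_m(\sigma_Y^n y)\qquad\text{for every }y\in Y,\ n,m\in\N,
\end{equation*}
since multiplying each side by $e^{-(n+m)P_X(\F)}e^{2C}$ rearranges to $\tilde g_{n+m}(y)e^{C}\leq (\tilde g_n(y)e^{C})(\tilde g_m(\sigma_Y^n y)e^{C})$.

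\textbf{The key inequality.} Let $A_k(y)$ denote the collection of cylinders $[x_1\ldots x_k]$ in $X$ with $\pi([x_1\ldots x_k])\subseteq [y_1\ldots y_k]$, so that choosing $E_k(y)$ amounts to picking one representative per cylinder and we may rewrite
\begin{equation*}
g_k(y)=\sum_{I\in A_k(y)}\sup_{x\in I}f_k(x).
\end{equation*}
For $K=[x_1\ldots x_{n+m}]\in A_{n+m}(y)$ set $I(K)=[x_1\ldots x_n]\in A_n(y)$ and $J(K)=[x_{n+1}\ldots x_{n+m}]\in A_m(\sigma_Y^n y)$; the map $K\mapsto (I(K),J(K))$ is injective. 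For each $x\in K$ we have $\sigma_X^n x\in J(K)$, so $\sup_{x\in K}f_m(\sigma_X^n x)\leq \sup_{x'\in J(K)}f_m(x')$, and trivially $\sup_{x\in K}f_n(x)\leq \sup_{x\in I(K)}f_n(x)$. Combining these with the almost additive bound $f_{n+m}(x)\leq e^{C}f_n(x)f_m(\sigma_X^n x)$ from \eqref{aaconst} gives
\begin{equation*}
g_{n+m}(y)\;\leq\;e^{C}\sum_{K\in A_{n+m}(y)}\!\!\Bigl(\sup_{x\in I(K)}f_n(x)\Bigr)\!\Bigl(\sup_{x'\in J(K)}f_m(x')\Bigr)\;\leq\;e^{C}g_n(y)g_m(\sigma_Y^n y),
\end{equation*}
the last step by enlarging the injective image of $K\mapsto(I(K),J(K))$ to all of $A_n(y)\times A_m(\sigma_Y^n y)$ and factoring the double sum.

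\textbf{Conclusion and obstacle.} Taking logarithms yields the subadditivity of $\log(\tilde g_n e^{C})$, hence $\H$ is a subadditive potential. Since $\widetilde{\G}$ differs from $\H$ by the bounded constant $C$, the asymptotic subadditivity of $\widetilde{\G}$ follows from the definition (page \pageref{defas}) with the witnessing subadditive potential taken to be $\H$ itself. The only delicate point is the passage from the $\sup$ over $K$ to the product of $\sup$s over $I(K)$ and $J(K)$, where one must note that $\sigma_X^n(K)\subseteq J(K)$ (with equality in the case of a one-step SFT, but only containment is needed for the inequality) and that the decomposition $K\mapsto (I(K),J(K))$ is injective, so no overcounting arises when the double sum is bounded above by the product $g_n(y)g_m(\sigma_Y^n y)$.
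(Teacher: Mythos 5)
Your proof is correct and follows essentially the same route as the paper: both reduce the claim to the inequality $g_{n+m}(y)\leq e^{C}g_n(y)g_m(\sigma_Y^n y)$ by splitting each length-$(n+m)$ cylinder over $[y_1\dots y_{n+m}]$ into its length-$n$ prefix and length-$m$ suffix, applying the almost additive bound \eqref{aaconst}, and then observing that $\widetilde\G$ and $\H$ differ by the constant $C$. Your reformulation of $g_n(y)$ as $\sum_{I\in A_n(y)}\sup_{x\in I}f_n(x)$ and the explicit injectivity of $K\mapsto(I(K),J(K))$ just make precise the paper's step of ``taking the supremum over $E_{n+m}(y)$,'' so the argument is the same in substance.
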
 
\begin{proof}
We first show that, for $n, m\in \N, y\in Y$, $\tilde{g}_{n+m}(y)\leq \tilde{g}_n(y)\tilde{g}_m(\sigma^{n}_Y y)e^{C}$. 
Let $y\in Y$ and take a set $E_{n+m}(y)$.
Let $x=(x_1, \dots,  x_{n},x_{n+1},\dots, x_{n+m}, \dots)\in E_{n+m}(y)$. %Then $f_{n+m}(x)\leq  e^{C}f_n(x)f_m(\sigma^n_Xx)$.
Noting that we can construct a set $E_n(y)$ such that $x\in E_n(y)$ and a set $E_{m}(\sigma^{n}_Yy)$ 
such that $\sigma^n_Xx\in E_{m}(\sigma^{n}_Yy)$, we obtain 
\begin{align*}
 &\sum_{x\in E_{n+m}(y)} f_{n+m}(x)e^{-(n+m)P_X(\F)}
\leq \sum_{x\in E_{n+m}(y)} f_{n}(x)f_m(\sigma^{n}_Xx)e^{-(n+m)P_X(\F)} e^{C}\\
&\leq \tilde g_n(y)\tilde g_n(\sigma^{n}_Yy)e^C.
\end{align*}
Taking the supremum over $x\in E_{n+m}(y)$, we obtain 
\begin{equation}\label{nearsub}
 \tilde {g}_{n+m}(y)\leq \tilde {g}_{n}(y) \tilde {g}_{m}(\sigma^{n}_Yy)e^{C}.
\end{equation}
Now for each $n\in \N, y\in Y$, let  $h_n(y)=e^{C}\tilde g_{n}(y)$. Then, (\ref{nearsub}) implies that 
$h_{n+m}(y)\leq h_n(y)h_m(\sigma^{n}_Yy)$ and so $\H$ is a subadditive potential on $Y$. From the definition, it is easy to see that 
$\widetilde \G$ is an asymptotically subadditive potential on $Y$. 
\end{proof}

\begin{proposition} \label{gibbs1}
Let $(X,\sigma_X)$, $(Y,\sigma_Y )$ be subshifts and $\pi:X \rightarrow Y$  a factor map. Suppose that $X$ has 
the specification property. 
Suppose that $\F=\{\log f_n\}_{n=1}^{\infty}$ is an almost additive potential on $X$ with bounded variation.
Then there exists a unique invariant Gibbs measure $\nu_{\widetilde \G}$ for 
$\widetilde\G=\{\log \tilde {g}_n\}_{n=1}^{\infty}$ on 
$Y$. It is the unique equilibrium state for $\widetilde \G$ and it is mixing.
\end{proposition}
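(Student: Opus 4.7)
My plan is to set $\nu_{\widetilde\G}:=\pi\mu$, where $\mu$ is the unique invariant Gibbs measure for $\F$ provided by Theorem~\ref{gibbs0}, and then verify in turn each conclusion of the proposition for this push-forward. Invariance of $\nu_{\widetilde\G}$ is immediate from $\pi\sigma_X=\sigma_Y\pi$, and mixing of $\nu_{\widetilde\G}$ follows from mixing of $\mu$ via the identity $\sigma_X^{-n}\pi^{-1}=\pi^{-1}\sigma_Y^{-n}$.

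The central computation is the Gibbs estimate. I would expand
$$\nu_{\widetilde\G}([y_1\dots y_n])=\sum_{[x_1\dots x_n]\,:\,\pi([x_1\dots x_n])\subseteq [y_1\dots y_n]}\mu([x_1\dots x_n])$$
and apply the Gibbs inequality~\eqref{gp} for $\mu$ at a representative point $x$ of each cylinder, using bounded variation~\eqref{bvforf} to pass between $f_n(x)$ and $\sup_{[x_1\dots x_n]}f_n$ with uniform distortion $M$. Summing over cylinders gives
$$\nu_{\widetilde\G}([y_1\dots y_n])\asymp e^{-nP_X(\F)}g_n(y)=\tilde g_n(y),$$
with multiplicative constants depending only on $M$ and the Gibbs constant $K_0$ for $\mu$. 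To turn this into the Gibbs inequality for $\widetilde\G$ I must show $P_Y(\widetilde\G)=0$: since $\tilde g_n$ depends only on the first $n$ coordinates of $y$, the separated-set pressure of the subadditive approximant $\H$ from Lemma~\ref{subd} collapses to $\limsup (1/n)\log \sum_{B_n(Y)}e^{C}\tilde g_n$, and the previous estimate together with $\sum_{B_n(Y)}\nu_{\widetilde\G}([y_1\dots y_n])=1$ forces this to be $0$. Hence $P_Y(\widetilde\G)=P_Y(\H)=0$, and $\nu_{\widetilde\G}$ satisfies \eqref{gp} for $\widetilde\G$.

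With these bounds in place, uniqueness as a Gibbs measure is the standard ergodicity argument: any other invariant Gibbs measure $\nu'$ for $\widetilde\G$ is mutually absolutely continuous with $\nu_{\widetilde\G}$ with bounded Radon--Nikodym derivative, which is shift-invariant, hence $\nu_{\widetilde\G}$-a.s.\ constant by ergodicity (from mixing), hence $1$. For the equilibrium statement, I would combine Shannon--McMillan--Breiman with the subadditive ergodic theorem applied to $\H$ (absorbing the constant $C$) and the Gibbs estimate above to conclude
$$h_{\nu_{\widetilde\G}}(\sigma_Y) = -\lim_{n\to\infty}\frac{1}{n}\log\tilde g_n(y) = -\lim_{n\to\infty}\frac{1}{n}\int\log\tilde g_n\,d\nu_{\widetilde\G},$$
yielding $h_{\nu_{\widetilde\G}}+\lim(1/n)\int\log\tilde g_n\,d\nu_{\widetilde\G}=0=P_Y(\widetilde\G)$, so $\nu_{\widetilde\G}$ is an equilibrium state. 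Uniqueness of the equilibrium state would be obtained by a relative-entropy argument: given any equilibrium state $\nu'$ for $\widetilde\G$, the identity
$$H_n(\nu'\|\nu_{\widetilde\G}) = -\int\log\nu_{\widetilde\G}([y_1\dots y_n])\,d\nu' - H_n(\nu'),$$
together with the Gibbs bound for $\nu_{\widetilde\G}$ and the equilibrium hypothesis $h_{\nu'}+\lim(1/n)\int\log\tilde g_n\,d\nu'=0$, forces the relative-entropy rate $h(\nu'\|\nu_{\widetilde\G})$ to vanish, hence $\nu'=\nu_{\widetilde\G}$.

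I expect the uniqueness of the equilibrium state to be the main obstacle, because the variational principle for asymptotically subadditive potentials does not by itself yield uniqueness; one must exploit the Gibbs structure via the relative-entropy identity above. The care required is in tracking the additive $C$-offset between $\widetilde\G$ and its subadditive bound $\H$, and in invoking $L^1$-convergence (not merely a.s.\ convergence) in the subadditive ergodic theorem so that $\lim(1/n)\int\log\tilde g_n\,d\nu'$ is well-defined for every invariant $\nu'$ under consideration.
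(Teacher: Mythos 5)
Your route is genuinely different from the paper's. The paper proves this proposition intrinsically on $Y$: it builds the measure as a weak* limit of the normalized atomic measures $\nu_n=\sum_{y\in A_n}\tilde g_n(y)\delta_y/\sum_{y\in A_n}\tilde g_n(y)$, derives the Gibbs property and ergodicity from sub/super-multiplicativity estimates on $S_n=\sum_{i_1\dots i_n\in B_n(Y)}a_{i_1\dots i_n}$ (Lemmas \ref{step2}--\ref{ergodic}), and only afterwards, in Theorem \ref{imp1}, identifies the resulting measure with $\pi\mu$ via uniqueness. You instead take $\pi\mu$ as the candidate from the start. For existence of an invariant Gibbs measure, the identity $P_Y(\widetilde\G)=0$, mixing, and uniqueness \emph{within the class of invariant Gibbs measures}, your argument is sound and in fact shorter: the cylinder estimate $\nu([y_1\dots y_n])\asymp \tilde g_n(y)$ is exactly the paper's (\ref{gb2})--(\ref{gibbs3}), the normalization $\sum_{B_n(Y)}\nu([y_1\dots y_n])=1$ does force the cylinder-sum pressure of the locally constant potential $\widetilde\G$ to vanish, and the bounded Radon--Nikodym derivative plus ergodicity (from mixing of $\mu$) correctly yields uniqueness of the Gibbs measure. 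The one thing this reorganization costs is that Theorem \ref{imp1} becomes definitional rather than a consequence of uniqueness, which is harmless.

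The genuine gap is the uniqueness of the \emph{equilibrium state}. Your relative-entropy identity, combined with the Gibbs bounds, gives $\lim_n \frac{1}{n}H_n(\nu'\|\nu_{\widetilde\G})=-\bigl(h_{\nu'}(\sigma_Y)+\lim_n\frac{1}{n}\int\log\tilde g_n\,d\nu'\bigr)$, so vanishing of the relative-entropy rate is \emph{equivalent} to $\nu'$ being an equilibrium state for $\widetilde\G$. The final step ``the rate vanishes, hence $\nu'=\nu_{\widetilde\G}$'' is therefore precisely the assertion to be proved, and nothing in the identity excludes a second equilibrium state; vanishing relative-entropy rate does not imply equality of measures in general, and the known results asserting it for Gibbs reference measures are themselves uniqueness-of-equilibrium-state theorems. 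What is missing is the Bowen-type singularity argument: reduce to ergodic $\nu'$ using affineness of $\nu'\mapsto h_{\nu'}(\sigma_Y)+\lim_n\frac{1}{n}\int\log\tilde g_n\,d\nu'$, note that an ergodic $\nu'\neq\nu_{\widetilde\G}$ is singular with respect to $\nu_{\widetilde\G}$, cover a set of small $\nu_{\widetilde\G}$-measure and large $\nu'$-measure by level-$n$ cylinders, and use the Gibbs upper bound together with Jensen's inequality to show that such a $\nu'$ has a strict entropy defect, i.e.\ $h_{\nu'}(\sigma_Y)+\lim_n\frac{1}{n}\int\log\tilde g_n\,d\nu'<0$. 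This is the content of the argument from Theorem 5 of \cite{B2006} that the paper invokes at this exact point, and your proof needs either to reproduce it or to cite it; as written, the uniqueness of the equilibrium state is not established.
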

We postpone the proof of Proposition \ref{gibbs1} until the end of this section.

\begin{proposition}\label{imp}
For $\widetilde \G= \{\log \tilde{g}_n \}_{n=1}^{\infty}$ on $Y$ in Proposition \ref{gibbs1}, we have 
$P_{Y}(\widetilde \G)=0$.
\end{proposition}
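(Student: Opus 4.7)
The plan is to compute $P_Y(\widetilde\G)$ via the Gibbs measure supplied by Proposition \ref{gibbs1} and match it against the counting formula for $P_X(\F)$ in Theorem \ref{BM}. The starting observation is that, because $\pi$ is a one-block factor map, the collection of cylinders $[x_1\cdots x_n]$ in $X$ with $\pi([x_1\cdots x_n])\subseteq [y_1\cdots y_n]$ depends only on the word $y_1\cdots y_n$, and the supremum defining $g_n(y)$ decouples (pick the optimal representative independently inside each cylinder) into
$$g_n(y)=\sum_{\substack{x_1\cdots x_n\in B_n(X)\\ \pi(x_i)=y_i,\ 1\le i\le n}}\sup_{x\in[x_1\cdots x_n]}f_n(x).$$
In particular $g_n$, and hence $\tilde g_n$, is constant on each cylinder $[y_1\cdots y_n]$ in $Y$. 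Summing over $[y_1\cdots y_n]\in B_n(Y)$ and regrouping by words in $X$ collapses the double sum to
$$\sum_{y_1\cdots y_n\in B_n(Y)}\tilde g_n(y)=e^{-nP_X(\F)}\sum_{x_1\cdots x_n\in B_n(X)}\sup_{x\in[x_1\cdots x_n]}f_n(x),$$
whose exponential growth rate is $-P_X(\F)+P_X(\F)=0$ by the formula for $P_X(\F)$ in Theorem \ref{BM}.

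The second step is to invoke the Gibbs property of $\nu_{\widetilde\G}$. Proposition \ref{gibbs1} provides $C_0>0$ with
$$\frac{1}{C_0}\le\frac{\nu_{\widetilde\G}([y_1\cdots y_n])}{e^{-nP_Y(\widetilde\G)}\,\tilde g_n(y)}\le C_0$$
for every $y\in Y$ and $n\in\N$; since $\tilde g_n$ is constant on $[y_1\cdots y_n]$ the bound is unambiguous. Summing the inequality over all $[y_1\cdots y_n]\in B_n(Y)$ and using that $\nu_{\widetilde\G}$ is a probability measure gives
$$\frac{1}{C_0}\le \frac{e^{nP_Y(\widetilde\G)}}{\sum_{y_1\cdots y_n\in B_n(Y)}\tilde g_n(y)}\le C_0.$$
Taking $(1/n)\log$ and letting $n\to\infty$, the estimate from the first step forces $P_Y(\widetilde\G)=0$.

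The only point requiring care is the decoupling used in the first display: one must check that a maximizing $E_n(y)$ may be obtained by optimizing $f_n$ independently inside each relevant cylinder, which is immediate from the definition of $E_n(y)$. I therefore do not expect any genuine obstacle beyond the formal verification that the one-block property of $\pi$ makes $g_n$ locally constant on cylinders of $Y$. Bounded variation of $\F$ is not required for the exponential rates to coincide, but would in any case supply the weaker bound $\asymp$ were the decoupling ever obstructed.
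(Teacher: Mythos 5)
Your proof is correct, but it takes a genuinely different route from the paper's. The paper first uses the variational principle for asymptotically subadditive potentials to write $P_Y(\widetilde\G)=P_Y(\G)-P_X(\F)$, then identifies $P_Y(\G)$ with the growth rate of $N_n=\sum_{y_1\dots y_n\in B_n(Y)}g_n(y)$ and proves $N_n/M\le G_n\le MN_n$, where $G_n=\sum_{z_1\dots z_n\in B_n(X)}\sup_{z\in[z_1\dots z_n]}f_n(z)$, using the bounded-variation constant $M$ of $\F$; this yields $P_Y(\G)=P_X(\F)$ and hence the claim, without ever invoking Proposition \ref{gibbs1}. You instead observe the exact identity $N_n=G_n$ --- your decoupling of the supremum in the definition of $g_n$ is legitimate, since $E_n(y)$ makes one independent choice per cylinder and the supremum of a sum over independent choices is the sum of the suprema --- and then you pin down $P_Y(\widetilde\G)$ by summing the Gibbs inequality for $\nu_{\widetilde\G}$ over all cylinders of length $n$. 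Your first step is sharper than the paper's (no bounded variation is needed for the counting identity, only the one-block property of $\pi$), but your second step imports Proposition \ref{gibbs1}, the heaviest result of the section: in effect you re-derive the content of Lemma \ref{step2} (namely $K_1\le e^{nP_Y(\widetilde\G)}/S_n\le K_2$ with $S_n=e^{-nP_X(\F)}N_n$) from the Gibbs property rather than from the sub- and superadditivity of $\{\log S_n\}$. This creates no circularity, since the proof of Proposition \ref{gibbs1} (via Lemmas \ref{step2}--\ref{ergodic}) nowhere uses Proposition \ref{imp}, but it does make your argument logically heavier where the paper's is self-contained, resting only on the counting formulas for pressure and the variational principle.
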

\begin{proof}
We first note that $\G=\{\log g_n\}_{n=1}^{\infty}$ and $\tilde \G=\{\log \tilde{g}_n\}_{n=1}^{\infty}$ have bounded variation 
because $g_n$ is a locally constant function 
that depends on the first
$n$ coordinates of $y\in Y$.
Since $\G$ and $\tilde \G$  
are asymptotically subadditive potentials on $Y$, 
we have from the variational principle for asymptotically subadditive potentials that
\begin{align}\label{kindofvp}
 P_{Y}(\widetilde \G)&=
\sup \{h_{m}(\sigma_Y)+\lim_{n\rightarrow \infty}\frac{1}{n}\int \log g_n(y)e^{-nP_{X}(\F)}dm: m\in M(Y, \sigma_Y)\}\\
&=\sup\{h_{m}(\sigma_Y)+\lim_{n\rightarrow \infty}\frac{1}{n}\int \log g_n(y)dm: m\in M(Y, \sigma_Y)\}-P_{X}(\F)\\
&=P_{Y}(\G)-P_{X}(\F).
\end{align}
Using the fact that $\G$ has 
bounded variation, the definition of topological pressure for asymptotically subadditive potentials gives us
\begin{equation}\label{topo1}
P_{Y}(\G)=\limsup_{n\rightarrow \infty}\frac{1}{n}
\log (\sum_{y_1\dots y_n\in B_n(Y)}g_n(y)),
\end{equation}
where $y$ is any point from the cylinder $[y_1\dots y_n]$. 
Let $N_{n}=\sum _{y_1\dots y_n\in B_{n}(Y)}g_n(y)$, where $y$ is any point from the cylinder $[y_1\dots y_n]$.
Since $\F$ is almost additive with bounded variation, by Theorem \ref{BM}, we have 
\begin{equation}\label{topo2}
P_{X}(\F)=\lim_{n\rightarrow \infty}\frac{1}{n}\log 
(\sum_{z_1\dots z_n\in B_{n}(X)}e^{\sup \log f_n(z)}),
\end{equation}
where the supremum is taken over all 
$z\in [z_1\dots z_n], n\in \N$. 
Let $G_n=\sum_{z_1\dots z_n\in B_{n}(X)}e^{\sup \log f_n(z)}$, where the supremum is taken over all 
$z\in [z_1\dots z_n]$.

Now we show that $G_n\geq {N_n}/M$. Since $\F$ satisfies (\ref{bvforf}), for each $z_1\dots z_n\in B_n(X)$, 
\begin{equation*}
 \frac{1}{M}\sup\{f_n(z):z\in [z_1\dots z_n]\}\leq f_n(z) \text { for any } z\in [z_1\dots z_n]. 
\end{equation*}
Thus we obtain, 
\begin{align*}
 e^{\sup_{z\in [z_1\dots z_n]}\log f_n(z)}&\geq e^{ 
\log\frac{\sup\{f_n(z):z\in [z_1\dots z_n]\}}{M}} \\
&=\frac{1}{M}\sup\{f_n(z):z\in [z_1\dots z_n]\}.
\end{align*}
Let $y_1\dots y_n\in B_n(Y)$ be fixed. Then 
\begin{align*}
\sum_{\substack{ \pi(z_1 \dots z_n)=y_1 \dots y_n\\ z_1 \dots z_n \in B_n(X)}}e^{\sup_{z\in [z_1\dots z_n]}
\log f_n(z)}&\geq \frac{1}{M}
\sum_{\substack{ \pi(z_1 \dots z_n)=y_1 \dots y_n\\ z_1 \dots z_n \in B_n(X)}}\sup \{f_n(z):z\in [z_1\dots z_n]\} \\
&\geq \frac{1}{M}\sum_{\substack{z\in [z_1\dots z_n]\\ \pi(z_1\dots z_n)=y_1\dots y_n}}
f_n(z).
\end{align*}
Therefore, by the definition of $g_n(y)$, 
$$\sum_{\substack{\pi(z_1 \dots z_n)=y_1 \dots y_n\\ z_1 \dots z_n \in B_n(X)} }e^{\sup_{z\in [z_1\dots z_n]}
\log f_n(z)}\geq \frac{1}{M}g_n(y), y\in [y_1\dots y_n]$$
Summing over all possible $y_1\dots y_n\in B_n(Y)$, we obtain
$$G_n=\sum_{y_1\dots y_n\in B_n(Y)}
(\sum_{\substack{ \pi(z_1 \dots z_n)=y_1 \dots y_n\\ z_1 \dots z_n \in B_n(X)}}
e^{\sup_{z\in [z_1\dots z_n]} \log f_n(z)})\geq \frac{1}{M}N_n.$$

Next we show that $G_n\leq MN_n$. For a fixed $z_1\dots z_n\in B_n(X)$, let $x$ be a fixed point from $[z_1\dots z_n]$.  
For any $z\in [z_1\dots z_n]$, we have $f_n(z)/f_n(x)\leq M$. Therefore,
$\sup\{\log f_n(z):z\in [z_1\dots z_n]\}\leq \log Mf_n(x)$. Using this, 
$e^{\sup_{z\in[z_1\dots z_n]} \log f_n(z)}\leq Mf_n(x)$ for any $x\in [z_1 \dots z_n]$.
Let $y_1\dots y_n\in B_n(Y)$ be fixed. Then 
\begin{equation}\label{eq100}
\sum_{\substack{\pi(z_1 \dots z_n)=y_1 \dots y_n\\ z_1 \dots z_n \in B_n(X)}} e^{\sup \log f_n(z)}
\leq \sum _{\substack{ \pi(z_1 \dots z_n)=y_1 \dots y_n\\ z_1 \dots z_n \in B_n(X)}} Mf_n(x)\leq M g_n(y), y\in[y_1\dots y_n],
\end{equation}
where the supremum in the first summation is taken over $z\in [z_1\dots z_n]$ such that
$\pi(z_1\dots z_n)=y_1\dots y_n$ and $x$ in the second summation is any point from the cylinder  
$[z_1\dots z_n]$ such that\
$\pi(z_1\dots z_n)=y_1\dots y_n.$ Therefore, summing (\ref{eq100}) over all possible $y_1\dots y_n\in B_n(Y)$, 
we obtain $G_n\leq MN_n$.
Hence $(N_n/M)\leq G_n\leq MN_n$. Using (\ref{topo1}) and (\ref{topo2}), we obtain 
$P_{Y}(\G)=P_{X}(\F)$ and this proves the proposition.
\end{proof}

Before studying the potential $\G=\{\log g_n\}_{n=1}^{\infty}$ on $Y$ in Theorem \ref{main1}, 
we first study the potential
$\widetilde \G=\{\log \tilde {g}_n\}_{n=1}^{\infty}$ on $Y$, where  $\log \tilde{g}_n(y)=\log 
g_n(y)e^{-nP_{X}(\F)}, y\in Y$. In the next theorem, we will find that the measure $\nu_{\widetilde \G}$ in Proposition \ref{gibbs1}
is the image of the Gibbs measure $\mu\in M(X, \sigma_X)$ for 
an almost additive potential $\F$ on $X$. 

\begin{theorem}\label{imp1}
Let $(X,\sigma_X)$, $(Y,\sigma_Y )$ be subshifts and $\pi:X \rightarrow Y$ be a factor map. 
Suppose that $X$ has the specification property.
For an almost additive potential 
$\F=\{\log f_n\}_{n=1}^{\infty}$ on $X$ 
with bounded variation, let $\mu\in M(X, \sigma_X)$ be the unique 
equilibrium state which is Gibbs for $\F$ and  $\nu=\pi\mu\in M(Y, \sigma_Y)$. 
Then $\nu$ is the unique invariant Gibbs measure for 
$\widetilde \G=\{\log \tilde g_n\}_{n=1}^{\infty}$ on $Y$ and 
$P_Y(\widetilde \G)=0$. $\nu$ is the unique equilibrium state for $\widetilde \G$. 
\end{theorem}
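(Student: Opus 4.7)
The plan is to verify the Gibbs inequality for $\nu = \pi\mu$ against $\widetilde\G$ directly from the Gibbs property of $\mu$ against $\F$, then use Proposition \ref{gibbs1} to deduce uniqueness, mixing, and the equilibrium state characterization for free.

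First I would dispose of the pressure identity: by Proposition \ref{imp} we already have $P_Y(\widetilde\G) = P_Y(\G) - P_X(\F) = 0$, so the Gibbs inequality for $\widetilde\G$ reduces to controlling the ratio $\pi\mu([y_1\dots y_n])/\tilde g_n(y)$ by uniform constants, i.e. showing there is $C_1 > 0$ with $C_1^{-1} \leq \pi\mu([y_1\dots y_n])/(g_n(y)e^{-nP_X(\F)}) \leq C_1$ for every $y \in Y$ and $n \in \mathbb{N}$.

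Next, the central computation. Since $\pi$ is a one-block factor map, $\pi^{-1}[y_1 \dots y_n]$ is the disjoint union of the cylinders $[x_1\dots x_n] \subseteq X$ with $\pi([x_1\dots x_n]) \subseteq [y_1\dots y_n]$, so
\begin{equation*}
\pi\mu([y_1\dots y_n]) \;=\; \sum \mu([x_1 \dots x_n]),
\end{equation*}
the sum taken over exactly the cylinders indexing any representative set $E_n(y)$. Pick any $E_n(y)$ and, for each cylinder, a representative point $x$. The Gibbs property of $\mu$ gives constants $C_0 > 1$ with $C_0^{-1} \leq \mu([x_1\dots x_n])/(f_n(x)e^{-nP_X(\F)}) \leq C_0$, while the bounded variation hypothesis (\ref{bvforf}) ensures that, for any other choice $x'$ in the same cylinder, $f_n(x') \leq M f_n(x)$. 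Consequently
\begin{equation*}
\frac{1}{M}\sum_{x \in E_n(y)} f_n(x) \;\leq\; g_n(y) \;\leq\; M \sum_{x \in E_n(y)} f_n(x),
\end{equation*}
because the supremum in the definition of $g_n(y)$ differs from any particular representative sum by at most the factor $M$. Combining these two chains of estimates yields
\begin{equation*}
\frac{1}{MC_0} \;\leq\; \frac{\pi\mu([y_1\dots y_n])}{g_n(y)\, e^{-nP_X(\F)}} \;\leq\; M C_0,
\end{equation*}
so $\nu$ is a Gibbs measure for $\widetilde\G$ with constant $MC_0$. Invariance of $\nu$ under $\sigma_Y$ follows from $\pi\circ\sigma_X=\sigma_Y\circ\pi$ and invariance of $\mu$.

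Finally, Proposition \ref{gibbs1} supplies a unique invariant Gibbs measure $\nu_{\widetilde\G}$ for $\widetilde\G$, which is simultaneously the unique equilibrium state for $\widetilde\G$ and mixing. Since we have just exhibited $\nu$ as an invariant Gibbs measure for $\widetilde\G$, we must have $\nu = \nu_{\widetilde\G}$, which completes the proof. The only genuine obstacle is keeping the bookkeeping straight in the Gibbs chain of inequalities—in particular, ensuring that the sum defining $\pi\mu([y_1 \dots y_n])$ really runs over the same family of cylinders as the sum inside $g_n(y)$, which is where the one-block assumption on $\pi$ and the definition of $E_n(y)$ enter decisively; everything else is bookkeeping supplied by Propositions \ref{imp} and \ref{gibbs1}.
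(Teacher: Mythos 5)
Your proposal is correct and follows essentially the same route as the paper: both push the Gibbs inequality for $\mu$ through the decomposition $\pi\mu([y_1\dots y_n])=\sum\mu([x_1\dots x_n])$, use bounded variation to replace the representative sum $\sum_{x\in E_n(y)}f_n(x)$ by $g_n(y)$ up to the factor $M$, invoke $P_Y(\widetilde\G)=0$ from Proposition \ref{imp}, and then cite Proposition \ref{gibbs1} for uniqueness and the equilibrium-state characterization. No gaps.
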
  
\begin{proof}
Since $\mu$ is the unique Gibbs measure for $\F$ on $X$, 
by definition,  there exists $C_1>0$ such that  
\begin{equation}\label{gb1}
\frac{1}{C_1}\leq \frac{\mu([x_1\dots x_n])}{e^{-nP_X(\F)}f_n(x)}\leq C_1, \text { for any } x\in [x_1\dots x_n], n\in \N.
\end{equation}
\end{proof}
It follows from  the definition of $\nu\in M(Y, \sigma_Y)$ that, for each $y_1\dots y_n\in B_n(Y)$,  
\begin{equation}\label{e1}
\nu([y_1\dots y_n])= \mu (\pi^{-1}([y_1\dots y_n]))=\sum_{{\substack{x_1 \dots x_n\in B_n(X)\\
\pi(x_1\dots x_n)=y_1 \dots y_n} }}\mu([x_1 \dots x_n]). 
\end{equation}
By (\ref{gb1}) and (\ref{e1}), 
\begin{align*}
\frac{1}{C_1} \sum_{\substack{x_1\dots x_n \in B_n(X)\\
 \pi(x_1\dots x_n)=y_1\dots y_n} } f_n(x)e^{-nP_X(\F)} & \leq 
\sum_{\substack{x_1\dots x_n \in B_n(X)\\ \pi(x_1\dots x_n)=y_1\dots y_n}}\mu([x_1\dots x_n]) \\
&\leq C_1 \sum_{\substack{x_1\dots x_n \in B_n(X)\\ \pi(x_1\dots x_n)=y_1\dots y_n }} f_n(x)e^{-nP_X(\F)},
\end{align*}
where $x$ in the first and third summations is any point from the cylinder $[x_1 \dots x_n]$ such that 
$\pi(x_1 \dots x_n)=y_1 \dots y_n$.
Therefore,  we obtain  
\begin{equation}\label{gb2}
\frac{1}{C_1}\leq \frac{\nu([y_1\dots y_n])}{\sum_{x_1\dots x_n \in B_n(X), 
\pi(x_1 \dots x_n)=y_1 \dots y_n} f_n(x)e^{-nP_X(\F)}}\leq C_1 
\end{equation}
for any $x\in [x_1\dots x_n], \pi(x_1\dots x_n)=
y_1\dots y_n$. 
Thus, using the property of bounded variation and (\ref{gb2}),  we can find $C_2>0$ such that   
\begin{equation*}
 \frac{1}{C_2}\leq \frac{\nu([y_1\dots y_n])}{\tilde g_n(y)}\leq C_2 
\end{equation*}
for any $n\in \N, y\in [y_1,\dots, y_n]$.
Since $P_Y(\widetilde \G)=0$ (see Proposition \ref{imp}),
\begin{equation}\label{gibbs3}
 \frac{1}{C_2}\leq \frac{\nu([y_1 \dots y_n])}
{e^{-nP_Y(\widetilde \G)}\tilde g_n(y)}\leq C_2 \text{ for any } y\in [y_1\dots y_n].
\end{equation}
Hence $\nu$ is an invariant Gibbs measure for $\widetilde \G$. The rest of the result follows immediately 
from Propositions~\ref{gibbs1} and~\ref{imp}. \\

\noindent \textbf{Proof of Theorem \ref{main1}}\\
Using (\ref{kindofvp}), clearly, $M_{\widetilde \G}(Y, \sigma_Y)=
M_{\G}(Y, \sigma_Y)$. By Theorem \ref{imp1}, $\nu$ is the unique invariant Gibbs measure for 
$\widetilde\G$ satisfying (\ref{gibbs3}) and 
by Proposition \ref{gibbs1} it is the unique equilibrium
state for $\widetilde \G$. Replacing $\tilde g_n(y)$ and $P_{Y}(\widetilde \G)$ in (\ref{gibbs3}) by $g_{n}(y)e^{-nP_X(\F)}$ and 
$P_X(\G)-P_Y(\F)$ respectively,  $\nu$ is the unique 
invariant Gibbs measure for $\G$ and clearly it is the unique equilibrium state for $\G$. The rest follows immediately from (\ref{kindofvp}).\\

\noindent \textbf{Proof of Corollary \ref{maincoro}}\\
Recall from Section \ref{background} that if $f\in Bow(X)$, then $\F=\{\log f_n\}_{n=1}^{\infty}$ 
where $f_n(x)=e^{f(x)+\dots + f(\sigma^n_Xx)}$ is an additive potential with bounded variation.
It is easy to see that $\bar \G$ is a subadditive potential. We apply Theorem \ref{main1} directly to $\F$ and obtain 
the result.\\

We will  use the remainder of this section to study Proposition \ref{gibbs1}. 
In order to prove Proposition \ref{gibbs1}, we make similar arguments to those used in Lemmas 4.6, 4.7 and 4.8 \cite{Y2}. These arguments are based on 
the proofs of Theorems 5 and 7 \cite{B2006}.
Since $X$ has the specification property, for any allowable words $u, v$, there always exists a word $w$ 
with a fixed length $k>0$ such that $uwv$ is allowable. Hence $Y$ also has the specification property with this fixed length $k>0$.     
Since $\tilde g_n$ is a locally constant function, define $a_{i_1\dots i_n}=\tilde{g}_n(y)$, for $y\in [i_1\dots i_n]$.
Define also $S_n=\sum_{i_1\dots i_n\in B_n(Y)}a_{i_1\dots i_n}$ and  $m=\min_{x\in X}f_k(x)$. For all $n\in \N$, let $A_n$ be a set consisting of exactly one point from each cylinder of length $n$ in $Y$. 
Define the Borel probability measure $\nu_n$ on $Y$ concentrated on $A_n$ by 
\begin{equation*}
\nu_{n}=\frac{\sum_{y\in A_n}\tilde g_n(y)\delta _{y}}{\sum_{y\in A_n}\tilde g_n(y)} 
\end{equation*}
where $\delta_{y}$ is the Dirac measure at $y$. Since $\nu_n$ is a Borel probability measure on $Y$ for all $n\in \N$, there exists 
a subsequence $\{\nu_{n_k}\}_{k=1}^{\infty}$ that converges to a Borel probability measure $\nu$ on $Y$ in the weak* topology.
In the following lemmas, 
we continue to use   $k$, $a_{i_1\dots i_n}$, $S_n$ and $m$ as defined above.  
For simplicity, let $E_n(y_1,\dots y_n)$ be a set consisting of exactly one point from each 
cylinder $[x_1\dots x_{n}]$  such that $\pi([x_1\dots x_n])\subseteq 
[y_1\dots y_n]$.

\begin{lemma}\label{step2}
There exist $K_1, K_2>0$ such that $K_1\leq e^{nP_{Y}(\widetilde \G)}/S_n\leq K_2$.   
\end{lemma}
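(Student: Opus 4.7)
The plan is to establish that $S_n$ is bounded above and below by positive constants, which combined with $P_Y(\widetilde \G) = 0$ from Proposition \ref{imp} yields the claim. Following the Bowen--Barreira scheme, I would show that $S_n$ is almost sub-multiplicative and almost super-multiplicative (with a gap of size $k$ from specification), and then close the argument by a Fekete-type estimate.

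Almost sub-multiplicativity follows directly from Lemma \ref{subd}: the bound $\tilde g_{p+q}(y) \leq e^{C} \tilde g_p(y) \tilde g_q(\sigma^p_Y y)$ and the fact that $\tilde g_p$ is locally constant on cylinders of length $p$ give, upon summing over $B_{p+q}(Y)$ and factoring by the first $p$ and last $q$ coordinates, $S_{p+q} \leq e^{C} S_p S_q$ for all $p, q \in \N$.

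For almost super-multiplicativity, I would combine specification on $X$ with almost additivity and bounded variation of $\F$. Given $u \in B_p(Y)$ and $v \in B_q(Y)$, for each pair of $X$-allowable lifts $z \in B_p(X), z' \in B_q(X)$ with $\pi(z) = u$ and $\pi(z') = v$, specification on $X$ produces a bridging word $c(z, z') \in B_k(X)$ such that $z c(z, z') z'$ is $X$-allowable. Setting $w(z, z') = \pi(c(z, z'))$, the concatenation $u w(z, z') v$ is $Y$-allowable. For any $x \in [z c(z, z') z']$, almost additivity and bounded variation yield
\begin{equation*}
f_{p+k+q}(x) \geq e^{-2C} f_p(x) f_k(\sigma^p_X x) f_q(\sigma^{p+k}_X x) \geq e^{-2C} m M^{-2} \bigl(\sup_{[z]} f_p\bigr)\bigl(\sup_{[z']} f_q\bigr).
\end{equation*}
Bounding $g_{p+k+q}(y_{uwv})$ from below by the partial sum over the specification triples $(z, c(z, z'), z')$ with $\pi(c(z, z')) = w$, and then summing over $w \in B_k(Y)$, I obtain
\begin{equation*}
\sum_{w \in B_k(Y)} g_{p+k+q}(y_{uwv}) \geq e^{-2C} m M^{-2} g_p(y_u) g_q(y_v).
\end{equation*}
Summing over $u \in B_p(Y), v \in B_q(Y)$ and using that $(u, w, v) \leftrightarrow uwv$ is a bijection onto $B_{p+k+q}(Y)$, I arrive at $N_{p+k+q} \geq e^{-2C} m M^{-2} N_p N_q$, so $S_{p+k+q} \geq C' S_p S_q$ with $C' = e^{-2C - k P_X(\F)} m M^{-2}$.

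Finally, since $P_Y(\widetilde \G) = 0$ by Proposition \ref{imp}, the limit $(1/n) \log S_n \to 0$ exists. Subadditivity of $\log(e^{C} S_n)$ combined with the limit zero forces $\log(e^{C} S_n) \geq 0$ for all $n$, i.e.\ $S_n \geq e^{-C}$, which gives $e^{nP_Y(\widetilde \G)}/S_n \leq e^{C}$. Iterating the super-multiplicative bound yields $S_{2^N n + (2^N - 1)k} \geq (C')^{2^N - 1} S_n^{2^N}$; taking $\log$, dividing by $2^N n + (2^N - 1) k$, and sending $N \to \infty$ forces $\log(C' S_n) \leq 0$, i.e.\ $S_n \leq 1/C'$, so $e^{nP_Y(\widetilde \G)}/S_n \geq C'$. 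Taking $K_1 = C'$ and $K_2 = e^{C}$ proves the lemma. The main obstacle is the super-multiplicative estimate: the bridging words $c(z, z')$ furnished by $X$-specification will in general project to many different $Y$-words, so one cannot fix $w$ in advance; the grouping by $w = \pi(c(z, z'))$ and the subsequent telescoping via the bijection $(u, w, v) \leftrightarrow uwv$ are the key devices that convert the $X$-level specification into a $Y$-level partition function estimate.
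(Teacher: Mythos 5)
Your proof is correct and follows essentially the same route as the paper: the almost sub-multiplicativity $S_{p+q}\le e^{C}S_pS_q$ and the gapped super-multiplicativity $S_{p+k+q}\ge C'S_pS_q$ (obtained by bridging lifts of $u$ and $v$ via specification on $X$ and grouping the pairs by the projected bridge word $w=\pi(c(z,z'))$) are exactly the paper's two key estimates, with the same constant $e^{-2C-kP_X(\F)}mM^{-2}$. The only difference is in the bookkeeping at the end: the paper upgrades the gapped inequality to genuine super-multiplicativity via the auxiliary bound $S_l\le e^{C}MS_kS_{l-k}$ (plus a separate treatment of small $l+n$) and then applies Fekete to the superadditive sequence, whereas you iterate the gapped inequality along a doubling subsequence and invoke $P_Y(\widetilde\G)=0$ from Proposition \ref{imp} (which is proved independently of this lemma, so there is no circularity); both closings are valid.
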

\begin{proof}
Let $l>n$. We show that $S_{l}\leq e^{C}S_nS_{l-n}$. 
Let $i_1\dots i_nj_{1}\dots j_{l-n}\in B_{l}(Y)$.
\begin{align*}
&a_{i_1\dots i_n j_{1}\dots j_{l-n}}\\
&=\sup \{\sum_{x \in E_l(i_1\dots i_n j_1\dots j_{l-n})} 
f_l(x)e^{-lP_{X}(\F)}\}\\
&\leq \sup \{ e^C\sum_{x \in E_l(i_1\dots i_nj_{1}\dots j_{l-n})} 
f_n(x)e^{-nP_{X}(\F)}f_{l-n}(\sigma^n_Xx)e^{-(l-n)P_{X}(\F)}\}\\
&\leq  e^C \sup 
\{\sum_{x \in E_n(i_1\dots i_n)} f_n(x)e^{-nP_{X}(\F)}\} 
\sup \{\sum_{x \in E_{l-n}(j_1\dots j_{l-n})} f_{l-n}(x)e^{-(l-n)P_{X}(\F)}\}\\
&=e^{C}a_{i_1\dots i_n}a_{j_1 \dots j_{l-n}}.
\end{align*}
Therefore, for each fixed $i_1\dots i_n\in B_n(Y)$, 
\begin{equation}\label{in1}
 \sum_{i_1\dots i_nj_{1}\dots j_{l-n}\in B_{l}(Y)}
a_{i_1\dots i_nj_{1}\dots j_{l-n}}\leq e^C
a_{i_1\dots i_n}S_{l-n}.
\end{equation}
Summing over all allowable words $i_1\dots i_n$ of length $n$ such 
that $i_1\dots i_nj_1\dots j_{l-n}$ is allowable, 
we obtain 
\begin{equation}\label{eq001}
S_{l}\leq e^CS_nS_{l-n}.
\end{equation}
Thus $\{\log (e^{C}S_n)\}_{n=1}^{\infty}$ is
subadditive. Since $\widetilde \G$ is asymptotically subadditive (Lemma \ref{subd}), 
by the definition of topological pressure (see \cite{FH}), 
\begin{equation}\label{step1}
P_{Y}(\widetilde \G)=\lim_{n\rightarrow \infty}\frac{\log S_n}{n}=
\lim_{n\rightarrow \infty}\frac{\log e^C S_n}{n}\leq 
\frac{\log e^C S_n}{n}
\end{equation}
for all $n\geq 1$. 
Hence we set $K_2=e^C$. 

Next we show that $S_{l+n}\geq C_0 S_lS_n$ for some $C_0>0$. 
First let $l>n+k$. Since $Y$ is a subshift with 
the specification 
property with a fixed length $k$, for each $i_{1}\dots i_{n}\in B_{n}(Y), j_{1}\dots j_{l-k}\in B_{l-k}(Y)$, there exists
$m_1\dots m_k\in B_{k}(Y)$ such that $i_{1}\dots i_{n}m_1\dots m_k j_{1}$\\
$\dots j_{l-k}\in B_{l+n}(Y)$. 

Fix $i_{1}\dots i_{n}\in B_{n}(Y)$ and $j_{1}\dots j_{l-k}\in B_{l-k}(Y)$. Let $x_1\dots x_n\in B_{n}(X)$ such that $\pi(x_1\dots x_n)=i_1\dots i_n$. Take $x'=(x'_1, \dots x'_{l-k}, \dots)\in X$
such that $\pi(x'_1\dots x'_{l-k})=j_1\dots j_{l-k}$. 
Then there exists a cylinder $\bar{m}_1\dots \bar{m}_k$ of length $k$ in $X$ such that $x_1\dots x_n\bar{m}_1\dots \bar{m}_k$\\
$x'_1
\dots x'_{l-k}\in B_{l+n}(X)$. Now define $\bar x=x_1\dots x_n\bar{m}_1\dots \bar{m}_kx'\in X$. 
We can construct such a $\bar x$
for each given $x_1\dots x_n\in B_{n}(X)$ such that $\pi(x_1\dots x_n)=i_1\dots i_n$ and 
$x'_1\dots x'_{l-k}\in B_{l-k}(X)$ such that $\pi(x'_1\dots x'_{l-k})=j_1\dots j_{l-k}$. 
Below we use the notation $\bar x_{x_1\dots x_n, x'_1\dots x'_{l-k}}$ for $\bar x$ to emphasize that $\bar x$ depends 
on these two allowable words.

For fixed $i_1\dots i_n, j_1\dots j_{l-k}$, we have 
\begin{align*}
&\sum_{i_{1}\dots i_{n}m_1\dots m_k j_{1}\dots i_{l-k}\in B_{n+l}(Y)} 
a_{i_{1}\dots i_{n}m_1\dots m_k j_{1}\dots i_{l-k}\in B_{n+l}(Y)}\\
%&\geq e^{-2C}\sum_{i_{1}\dots i_{n} m_1\dots m_k j_{1}\dots i_{l-k}\in B_{l+n}(Y)}
%\sup \{\sum_{x\in E_{l+n}(i_1\dots i_n m_1\dots m_k j_{1}\dots j_{l-k})} 
%f_n(x) f_k({\sigma}^n x)f_{l-k}({\sigma}^{n+k}x)e^{-(l+n)P_{X}(\F)}\}\\
&\geq e^{-2C}\sum_{i_{1}\dots i_{n} m_1\dots m_k j_{1}\dots i_{l-k}\in B_{l+n}(Y)}
\sup \{\sum_{x\in E_{l+n}(i_1\dots i_n m_1\dots m_k j_{1}\dots j_{l-k})} 
f_n(x) f_k({\sigma}^n x)f_{l-k}({\sigma}^{n+k}x)e^{-(l+n)P_{X}(\F)}\}\\
& \geq e^{-2C} \\
&\times \sum _{\substack {x_1\dots x_{n}\in B_{n}(X)\\ x'_1\dots x'_{l-k}\in B_{l-k}(X) \\ 
\pi(x_1\dots x_{n})=i_1\dots i_{n}
\\ \pi(x'_1\dots x'_{l-k})=j_1\dots j_{l-k}}} f_n(\bar x_{x_1\dots x_n, {x'}_1 \dots {x'}_{l-k}})f_k(
{\sigma}^{n}(
\bar x_{x_1\dots x_n, {x'}_1\dots {x'}_{l-k}}))f_{l-k}({\sigma}^{n+k}(\bar x_{x_1\dots x_n, {x'}_1\dots {x'}_{l-k}}))
e^{-(l+n)P_{X}(\F)}\\
&\geq \frac {e^{-kP_X(\F)-2C}me^{-(n+l-k)P_{X}(\F)}}{M^2} 
\sum_{\substack {x_1\dots x_n\in B_n(X)\\ \pi(x_1\dots x_n)=i_1\dots i_n}}
\sup\{f_n(x):x\in [x_1 \dots x_n]\}  \\
&\times
\sum_{\substack {x_1\dots x_{l-k}\in B_{l-k}(X)\\\pi(x_1\dots x_{l-k})=j_1\dots j_{l-k}}} 
\sup\{f_{l-k}(x):x\in [x_1\dots x_{l-k}]\},
\end{align*}
where $\bar x_{x_1\dots x_n, {x'}_1 \dots {x'}_{l-k}}$ in the second inequality is chosen as explained in the 
preceding paragraph and  
for the last inequality we use the fact that $\F$ has bounded variation. 
%where we used $\sup\{f_n(x):x\in f_n(x):x\in [x_1\dots x_{n}], \pi (x) \in [i_1\dots i_n]\} \geq f_n(x)$ for each 
%$x\in [x_1\dots x_n]$ such that $\pi(x)\in i_1\dots i_n]$. 
Therefore, for each $i_1\dots i_n\in B_{n}(Y), j_1\dots j_{l-k}\in B_{l-k}(Y)$,  we have 
\begin{equation}\label{keypro}
\sum_{i_{1}\dots i_{n} m_1 \dots m_k j_{1}\dots j_{l-k}\in B_{l+n}(Y)} 
a_{i_{1}\dots i_{n}m_1 \dots m_k j_{1}\dots i_{l-k}}\geq 
\frac{e^{-kP_X(\F)-2C}m}{M^2}
a_{i_1\dots i_{n}}a_{j_1\dots j_{l-k}}. 
\end{equation}
Summing over all allowable words $i_{1}\dots i_{n}$ of length $n$ in $Y$ and $j_{1}\dots j_{l-k}$ of length $(l-k)$ in $Y$, 
we obtain
\begin{equation*}
 S_{l+n}\geq \frac{e^{-kP_X(\F)-2C}m}{M^2}S_n S_{l-k}. %\geq \frac{e^{-k-2C}mS_lS_n}{M^3e^CS_K}.
\end{equation*}
Now we claim that $S_l\leq e^{C}M S_k S_{l-k}$. 
For any allowable word $i_{k+1}\dots i_{l}$ in $Y$, there exists
$m_1\dots m_k$ such that  $m_1 \dots m_k i_{k+1}\dots i_{l}$ 
is allowable in $Y$. Let  $i_{k+1}\dots i_{l}$  be fixed.
Then 
\begin{align*}
&\sum_{m_1\dots m_k i_{k+1}\dots i_l\in B_{l}(Y)}
\sup \{\sum_{x\in E_l(m_1\dots m_k i_{k+1}\dots i_{l})} 
f_l(x)\}e^{-lP_X(\F)}\\
&\leq \sum_{m_1\dots m_k i_{k+1}\dots i_l\in B_{l}(Y)}\sup
\{\sum_{x\in E_l(m_1\dots m_k i_{k+1}\dots i_{l})}
f_k(x)f_{l-k}(\sigma^k x)e^C\}e^{-lP_X(\F)}\\
&\leq e^CS_ke^{-(l-k)P_X(\F)}\sum_{\substack{x_1\dots x_{l-k}\in B_{l-k}(X)\\ \pi(x_1\dots x_{l-k})=
i_{k+1}\dots i_l}}\sup\{f_{l-k}(x):x\in [x_1\dots x_{l-k}]\}
\leq e^C M S_k a_{i_{k+1}\dots i_l},
\end{align*}
where for the last inequality we use the fact that $\F$ has bounded variation.
Summing over all allowable words $i_{k+1}\dots i_{l}$, we obtain 
$S_{l}\leq e^{C}MS_{k}S_{l-k}$. 
Hence $S_{l+n}\geq (e^{-3C-kP_X(\F)}m/(M^{3}S_k))S_nS_{l}$ for all $l>k, n\geq 1$. 
For $l+n\leq k+1$, we can also find $C'$ such that $S_{l+n}\geq C'S_lS_n$.
Setting $C''=\min \{C', e^{-3C-kP_X(\F)}m/(M^3S_k)\}$, $\{\log (C''S_n)\}_{n=1}^{\infty}$
is super additive. Therefore, 
\begin{equation*}
P_Y(\widetilde \G)=
\lim_{n \rightarrow\infty}\frac{1}{n}\log S_n=\lim_{n\rightarrow \infty}
\frac{1}{n}\log (C''S_n)\geq \frac{1}{n}\log (C''S_n).  
\end{equation*}
for all $n\geq 1$. Hence we set $K_1=C''$.
\end{proof}

\begin{lemma}\label{step3}
There exist $C_1, C_2>0$ such that
\begin{equation*}
C_1\leq \frac{\nu_{l}([i_1 \dots i_n])}{e^{-n P_Y(\widetilde \G)}\tilde g_n(y)}
\leq C_2 , y\in [i_1\dots i_n] 
\end{equation*}
for all $l, n\in\N, l>n+k$ and cylinders $[i_1\dots i_n]$ in $Y$. Hence $\nu_l$ is a Gibbs measure for $\widetilde \G$. 
\end{lemma}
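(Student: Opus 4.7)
The plan is to write $\nu_l([i_1\dots i_n])$ explicitly in terms of the quantities $a_{i_1\dots i_n}$ and $S_l$ defined before Lemma \ref{step2}, and then bound the numerator above and below using exactly the same subadditivity/specification techniques that were used to prove Lemma \ref{step2}, finally converting ratios of the $S_l$'s into the factor $e^{-nP_Y(\widetilde\G)}$ via Lemma \ref{step2} itself.

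First I would observe that, since $A_l$ contains one point from each cylinder of length $l$ and $\tilde g_l$ is constant on each such cylinder, the definition of $\nu_l$ gives
\begin{equation*}
\nu_l([i_1\dots i_n])=\frac{1}{S_l}\sum_{\substack{j_{n+1}\dots j_l\\ i_1\dots i_n j_{n+1}\dots j_l\in B_l(Y)}} a_{i_1\dots i_n j_{n+1}\dots j_l}
\end{equation*}
whenever $l>n$. Call the numerator $T_{l,n}(i_1\dots i_n)$.

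For the upper bound, inequality (\ref{in1}) already gives $T_{l,n}(i_1\dots i_n)\le e^{C}\,a_{i_1\dots i_n}\,S_{l-n}$. Combining this with the two-sided estimate of Lemma \ref{step2}, namely $K_1\le e^{mP_Y(\widetilde\G)}/S_m\le K_2$ for every $m$, we obtain $S_{l-n}/S_l\le (K_2/K_1)\,e^{-nP_Y(\widetilde\G)}$, and hence
\begin{equation*}
\nu_l([i_1\dots i_n])\le \frac{e^{C}K_2}{K_1}\,\tilde g_n(y)\,e^{-nP_Y(\widetilde\G)},
\end{equation*}
which supplies $C_2$.

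For the lower bound, I would adapt the specification argument used for (\ref{keypro}) in Lemma \ref{step2} but with the roles shifted so that the connecting block sits between $i_1\dots i_n$ and a generic suffix of length $l-n-k$. Here the hypothesis $l>n+k$ enters, and the bounded variation constant $M$ plus the constants $m=\min_x f_k(x)$ and $C$ enter through exactly the same chain of estimates. The outcome is an inequality of the form $T_{l,n}(i_1\dots i_n)\ge C''\,a_{i_1\dots i_n}\,S_{l-n-k}$ for a positive constant $C''$ independent of $l,n$ and of the word. Applying Lemma \ref{step2} once more yields $S_{l-n-k}/S_l\ge (K_1/K_2)\,e^{-(n+k)P_Y(\widetilde\G)}$, so that
\begin{equation*}
\nu_l([i_1\dots i_n])\ge C''\,\frac{K_1}{K_2}\,e^{-kP_Y(\widetilde\G)}\,\tilde g_n(y)\,e^{-nP_Y(\widetilde\G)},
\end{equation*}
which provides $C_1$. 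The Gibbs property of $\nu_l$ for $\widetilde\G$ then follows from the definition \ref{defgb}.

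The only delicate step is the lower bound on $T_{l,n}$: it requires reproducing the specification-plus-bounded-variation calculation that yielded (\ref{keypro}), being careful that the constants obtained depend on $k$, $C$, $m$, $M$ and $P_X(\F)$ but not on $l$ or $n$ or the particular word $i_1\dots i_n$. The upper bound and the final assembly are routine once Lemma \ref{step2} is in hand.
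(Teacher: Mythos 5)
Your proposal is correct and follows essentially the same route as the paper: the same decomposition $\nu_l([i_1\dots i_n])=T_{l,n}/S_l$, the upper bound via (\ref{in1}), the lower bound via the specification-plus-bounded-variation argument behind (\ref{keypro}) giving $T_{l,n}\geq \frac{e^{-2C-kP_X(\F)}m}{M^2}a_{i_1\dots i_n}S_{l-n-k}$, and the conversion of the ratios $S_{l-n}/S_l$ and $S_{l-n-k}/S_l$ into powers of $e^{-P_Y(\widetilde\G)}$ via Lemma \ref{step2}. The only cosmetic difference is that the paper routes the lower-bound conversion through the submultiplicative inequality (\ref{eq001}) before invoking Lemma \ref{step2}, while you apply Lemma \ref{step2} to $S_{l-n-k}$ and $S_l$ directly; both yield constants independent of $l$, $n$, and the word.
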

\begin{proof}
 Let $[i_1 \dots i_n]$ be a fixed cylinder of length $n$ in $Y$. By the definition 
of $\nu_l$, for $n<l$,
\begin{equation*}
 \nu_{l}([i_1\dots i_n])=\frac{\sum_{i_1\dots i_n j_1\dots j_{l-n}\in B_{l}(Y)}
 a_{i_1\dots i_n j_1\dots j_{l-n}}}{S_l}.
\end{equation*}
Given  $j_{k+1}\dots j_{l-n}\in B_{l-n-k}(Y)$, we can find $m_1\dots m_k\in B_{k}(Y)$
such that $i_1\dots i_n m_1\dots m_k j_{k+1}$\\$\dots j_{l-n}\in B_{l}(X)$. Hence
\begin{align*}
&\sum_{i_1\dots i_n j_1\dots j_{l-n}\in B_{l}(Y)}
a_{i_1 \dots i_n j_1\dots j_{l-n}}\\
&\geq e^{-2C}e^{-lP_X(\F)}
\sum_{i_1\dots i_n m_1\dots m_k
j_{k+1}\dots j_{l-n}\in B_{l}(Y)}\sup
\{\sum_{x\in E_n(i_1\dots i_n m_1\dots m_k
j_{k+1}\dots j_{l-n})}f_n(x)f_k({\sigma}^n x)f_{l-(n+k)}({\sigma}^{n+k}x)\}\\
&\geq \frac{e^{-2C-kP_X(\F)}m}{M^2}a_{i_1 \dots i_n}a_{j_{k+1} \dots j_{l-n}}.
\end{align*}
Taking all possible $j_{k+1}\dots j_{l-n}$, we obtain
\begin{equation}
\sum_{i_1\dots i_n j_1\dots j_{l-n}\in B_{l}(Y)}
a_{i_1\dots i_n j_1\dots j_{l-n}}\geq \frac{e^{-2C-kP_X(\F)}m}{M^2}S_{l-n-k}a_{i_1\dots i_n}.
\end {equation}
Hence for $y\in [i_1\dots i_n]$,
\begin{align*}
&\frac{\nu_{l}([i_1 \dots i_n])}{e^{-nP_Y(\widetilde \G)}\tilde g_n(y)}\geq \frac{e^{-2C-kP_X(\F)}m S_{l-n-k}e^{nP_{Y}
(\widetilde \G)}}{M^2 S_l}\\
&\geq \frac{e^{-2C-kP_X(\F)}m e^{nP_Y(\widetilde \G)}}{M^2 e^C S_{n+k}} 
(\text{by } (\ref{eq001}))
\geq \frac{e^{-3C-kP_X(\F)}m K_1}{M^2 e^{kP_{Y}(\widetilde \G)}} 
(\text{by Lemma } \ref{step2}). 
\end{align*}

Similarly,
\begin{equation*}
\frac{\nu_{l}([i_1 \dots i_n])}{e^{-nP_Y(\widetilde \G)}\tilde g_n(y)}
\leq 
\frac{e^C S_{l-n}e^{lP_{Y}(\widetilde \G)}}{e^{(-n+l)P_{Y}(\widetilde \G)}S_l} (\text {by } (\ref{in1}))\\
\leq \frac{K_2e^C}{K_1} (\text {by Lemma } \ref{step2}).
\end{equation*}
\end{proof}

\begin{lemma}\label{inv}
 Let $\nu$ be the limit of a convergent subsequence $\{\nu_{n_k}\}_{k=1}^{\infty}$
of $\{\nu_n\}_{n=1}^{\infty}$ and let $\mu_n=\frac{1}{n}\sum_{i=0}^{n-1}{\sigma_Y}^{i}(\nu)$.
Then any limit point $\mu$ of $\{\mu_n\}_{n=1}^{\infty}$ is an invariant Gibbs measure for $\widetilde \G$.
\end{lemma}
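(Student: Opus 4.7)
\noindent\textbf{Proof plan for Lemma \ref{inv}.}
The plan is to verify two things for any weak\(^*\) limit point \(\mu\) of \(\{\mu_n\}\): the invariance \(\sigma_Y\mu=\mu\), and the Gibbs property for \(\widetilde{\G}\). Invariance is immediate from the Krylov--Bogolyubov-style identity
\[
\sigma_Y\mu_n-\mu_n=\tfrac{1}{n}\bigl(\sigma_Y^n\nu-\nu\bigr),
\]
whose total variation tends to zero, so for every \(\phi\in C(Y)\) the quantities \(\int\phi\,d\mu_n\) and \(\int\phi\circ\sigma_Y\,d\mu_n\) have the same weak\(^*\) limit. The substantive work is to show that \(\mu\) satisfies uniform Gibbs bounds \(C_1\le \mu([i_1\dots i_n])/\bigl(e^{-nP_Y(\widetilde{\G})}\tilde{g}_n(y)\bigr)\le C_2\) on every cylinder.

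The key observation is that cylinders in \(Y\) are clopen, so their indicator functions are continuous and weak\(^*\) limits preserve cylinder measures. First I would pass the bounds of Lemma~\ref{step3} to \(\nu\): since \(\nu_{n_k}([i_1\dots i_n])\to\nu([i_1\dots i_n])\), the estimate from Lemma~\ref{step3} survives in the limit, giving \(\nu\) itself Gibbs bounds on cylinders (with no a priori invariance required). The main step is then to upgrade this to uniform Gibbs bounds for each translate \(\sigma_Y^i\nu\), with constants independent of \(i\). Writing
\[
\sigma_Y^i\nu([i_1\dots i_n])=\!\!\sum_{\substack{j_1\dots j_i\in B_i(Y)\\ j_1\dots j_ii_1\dots i_n\in B_{n+i}(Y)}}\!\!\nu([j_1\dots j_ii_1\dots i_n]),
\]
I insert the Gibbs estimate for \(\nu\) and control \(\tilde{g}_{n+i}(z)\). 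For the upper bound, the almost subadditivity \(\tilde{g}_{n+i}(z)\le e^C\tilde{g}_i(z)\tilde{g}_n(\sigma_Y^iz)\) from Lemma~\ref{subd} lets me factor out \(\tilde{g}_n(y)\) (with \(y\in[i_1\dots i_n]\)) and estimate \(\sum_{j_1\dots j_i}\tilde{g}_i(z)\le S_i\le e^{iP_Y(\widetilde{\G})}/K_1\) via Lemma~\ref{step2}, so \(\sigma_Y^i\nu([i_1\dots i_n])\le\mathrm{const}\cdot e^{-nP_Y(\widetilde{\G})}\tilde{g}_n(y)\).

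The harder direction is the lower bound, where no direct subadditive lower estimate is available, so I exploit the specification property of \(Y\) in the same way as in Lemma~\ref{step2}. Restricting the sum to words \(j_1\dots j_{i-k}m_1\dots m_ki_1\dots i_n\) where \(m_1\dots m_k\) is the bridge word produced by specification, the inequality
\[
\sum_{\text{bridges }m_1\dots m_k}a_{j_1\dots j_{i-k}m_1\dots m_ki_1\dots i_n}\ge\frac{e^{-kP_X(\F)-2C}m}{M^2}\,a_{j_1\dots j_{i-k}}\,a_{i_1\dots i_n}
\]
(the analogue of the key estimate in Lemma~\ref{step2} with the roles of prefix and suffix swapped) yields, after summing over all \(j_1\dots j_{i-k}\in B_{i-k}(Y)\), a lower bound \(\mathrm{const}\cdot S_{i-k}\cdot\tilde{g}_n(y)\). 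Combining with \(S_{i-k}\ge e^{(i-k)P_Y(\widetilde{\G})}/K_2\) from Lemma~\ref{step2} produces the desired lower bound, uniform in \(i\). Averaging over \(0\le i\le n-1\) then gives the same uniform Gibbs estimates for \(\mu_n\); taking the weak\(^*\) limit along a convergent subsequence \(\mu_{n_j}\to\mu\) and using clopenness of cylinders transfers the bounds to \(\mu\).

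The main obstacle is precisely this lower bound for \(\sigma_Y^i\nu\): \(\tilde{g}_n\) enjoys only a one-sided (upper) almost subadditivity, so one cannot directly factor \(\tilde{g}_{n+i}\) from below, and the argument must instead be organized so that specification in \(X\) produces the concatenated preimages needed to reverse the inequality up to a universal constant. Once that is in place, invariance, the Gibbs property, and consequently that \(\mu\) is an invariant Gibbs measure for \(\widetilde{\G}\), all follow.
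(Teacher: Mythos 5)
Your proposal is correct and follows essentially the same route as the paper: pass the bounds of Lemma \ref{step3} to $\nu$ via weak$^*$ convergence on clopen cylinders, bound $\sigma_Y^l\nu([i_1\dots i_n])$ from above by the subadditivity estimate (\ref{in1}) together with Lemma \ref{step2}, and from below by the specification/bridge-word estimate (\ref{keypro}) with the roles of prefix and suffix swapped, then average and pass to the limit. You correctly identify the one-sidedness of the subadditivity as the reason the lower bound needs specification, which is exactly the structure of the paper's argument.
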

\begin{proof}
By Lemma \ref{step3}, $\nu$ satisfies, for each cylinder $[i_1\dots i_n]$,
\begin{equation}\label{nugb}
C_1\leq \frac{\nu([i_1 \dots i_n])}{e^{-n P_Y(\widetilde \G)}\tilde g_n(y)}
\leq C_2 , y\in [i_1\dots i_n], n\in \N.
\end{equation}
Now we proceed in a similar way to the proof of Lemma~4.8 in~\cite{Y2}.
Suppose that $\{\mu_{n_k}\}_{k=1}^{\infty}$ converges to $\mu$ in the weak* topology.
To see that $\mu$ is Gibbs, let $i_1\dots i_n$ be a fixed allowable word of length 
$n$ in $Y$. Then for each $l, n\in\N, l>k$,
\begin{align*}
 ({\sigma^{l}_Y}\nu)([i_1\dots i_n])=&\sum_{j_1\dots j_li_1\dots i_n\in B_{l+n}(Y)}\nu([j_1\dots j_li_1\dots i_n])\\
&\geq C_1 e^{-(l+n)P_{Y}(\widetilde \G)}\sum_{j_1\dots j_li_1\dots i_n\in B_{l+n}(Y)}a_{j_1\dots j_li_1\dots i_n}.
\end{align*}
For a cylinder $j_{1}\dots j_{l-k}$, there exists $m_1\dots m_k\in B_{k}(Y)$
such that $j_1\dots j_{l-k} m_1\dots m_k i_{1}\dots i_{n}\in B_{l+n}(X)$.
For fixed $i_1 \dots i_n$ and $j_{1}\dots j_{l-k}$, using (\ref{keypro}), we obtain 
\begin{equation*}
\sum_{j_1\dots j_{l-k}m_1\dots m_ki_1\dots i_n\in B_{n+l}(Y)}a_{j_1\dots j_{l-k}m_1\dots m_ki_1\dots i_n}
\geq \frac{e^{-kP_X(\F)-2C}m}{M^2}a_{j_1\dots j_{l-k}}a_{i_1\dots i_n}. 
\end{equation*}
Summing over all allowable words $j_1\dots j_{l-k}$ of length $(l-k)$ in $Y$, for each fixed $i_1\dots i_n$, we have
$\sum_{j_1\dots j_l i_1\dots i_n\in B_{l+n}(X)}a_{j_1\dots j_l i_1\dots i_n}\geq 
(e^{-kP_X(\F)-2C}mS_{l-k}a_{i_1\dots i_n})/{M^2}$.
Therefore, 
\begin{align*}
&(\sigma^{l}_Y \nu)([i_1\dots i_n])\geq \frac{C_1e^{-kP_X(\F)-2C}m}{M^2}
e^{-(l-k)P_{Y}(\widetilde \G)}e^{-(k+n)P_{Y}(\widetilde \G)}S_{l-k}a_{i_1\dots i_n} \\
&\geq \frac{C_1 e^{-2C-kP_X(\F)-kP_{Y}(\widetilde \G)}m}{M^2 K_2}e^{-nP_Y(\widetilde \G)}a_{i_1\dots i_n}
\geq \frac{C_1e^{-2C-kP_X(\F)-kP_{Y}(\widetilde \G)}m}{C_2 M^2 K_2}\nu[i_1\dots i_n].
\end{align*}
Similarly, for each fixed $i_1\dots i_n\in B_{n}(Y)$,
\begin{align*}
&(\sigma^{l}_Y \nu)([i_1\dots i_n])
\leq C_2 e^{-(n+l)P_{Y}(\widetilde \G)}\sum_{j_1\dots j_l i_1\dots i_n\in B_{l+n}(Y)}a_{j_1\dots j_l i_1\dots i_n}
\\
&\leq C_2e^{-(n+l)P_{Y}(\widetilde \G)}e^CS_{l}a_{i_1\dots i_n}(\text {by using a similar proof of } (\ref{in1}))\\
&\leq \frac{C_2 e^C}{K_1 C_1}\nu[i_1\dots i_n].
\end{align*}
Using arguments similar to those in the final part of the proof of Lemma~4.8 of~\cite{Y2}, we obtain 
$\bar C_1, \bar C_2>0$ such that 
\begin{equation*}
\bar C_1 \leq \frac{\mu[i_1\dots i_n]}{e^{-nP_Y(\widetilde \G)}\tilde g_n(y)}\leq \bar C_2
\text{ for all } n\in \N, y\in [i_1\dots i_n].
\end{equation*} 
Therefore, $\mu$ is an invariant Gibbs measure for $\widetilde \G$. 
\end{proof}

Next we show that $\mu$ in Lemma~\ref{inv} is ergodic. To prove this, we shall 
need the following lemma, which is  similar to Lemma~4.9 in~\cite{Y2}.

\begin{lemma}\label{short}
Let $u_1=i_1\dots i_n$ and $u_2= j_1\dots j_l$, $l,n\in \N$ be allowable words in $Y$ and let $t>n+2k, t\in\N$. 
Then there exists $N$ such that 
\begin{equation*}
\sum_{u_1b_1\dots b_{t-n}u_2\in B_{l+t}(Y)}a_{u_1b_1\dots b_{t-n}u_2}\geq Na_{u_1}a_{u_2}S_{t-n-2k}.
\end{equation*}
\end{lemma}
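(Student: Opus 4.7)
The proof should follow the same pattern as the one used to establish (\ref{keypro}) in Lemma \ref{step2}, but now with two specification gluings instead of one. The rough picture is: to manufacture allowable words of length $t+l$ starting with $u_1$ and ending with $u_2$, I will interpolate between $u_1$, an arbitrary middle block $w$ of length $t-n-2k$, and $u_2$ by inserting two blocks of length $k$ supplied by the specification property of $X$.

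Concretely, my first step is to rewrite the left-hand side as
$$\sum_{u_1 b_1\dots b_{t-n}u_2\in B_{l+t}(Y)}a_{u_1 b_1\dots b_{t-n}u_2}=e^{-(t+l)P_X(\F)}\sum_{V}\sup_{x\in[V]}f_{t+l}(x),$$
where $V$ ranges over all allowable words of length $t+l$ in $X$ whose first $n$ coordinates project to $u_1$ and whose last $l$ coordinates project to $u_2$. I will restrict this sum to the special family of $V$'s obtained as follows: for each triple $(P,Q,R)$ with $P\in B_n(X)$ projecting to $u_1$, $Q\in B_{t-n-2k}(X)$ arbitrary, and $R\in B_l(X)$ projecting to $u_2$, pick once and for all specification blocks $\bar m(P,Q),\bar m'(Q,R)\in B_k(X)$ so that $V=P\bar m(P,Q)Q\bar m'(Q,R)R\in B_{t+l}(X)$. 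The map $(P,Q,R)\mapsto V$ is injective because $V$ determines each block by reading off the appropriate coordinates, and thus determines $\bar m,\bar m'$ as well.

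Next, for each such $V$ I pick any $x\in[V]$ and apply the almost-additive lower bound in (\ref{aaconst}) four times to split
$$f_{t+l}(x)\geq e^{-4C}f_n(x)f_k(\sigma_X^nx)f_{t-n-2k}(\sigma_X^{n+k}x)f_k(\sigma_X^{t-k}x)f_l(\sigma_X^tx).$$
Using the bounded variation bound (\ref{bvforf}) on the three non-gluing factors gives $f_n(x)\geq M^{-1}\sup_{[P]}f_n$, $f_{t-n-2k}(\sigma_X^{n+k}x)\geq M^{-1}\sup_{[Q]}f_{t-n-2k}$, and $f_l(\sigma_X^tx)\geq M^{-1}\sup_{[R]}f_l$, while the two gluing factors are bounded below by $m=\min_{X}f_k$.

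Summing over $(P,Q,R)$ then factorizes as a triple product. Recognizing
$$\sum_{\pi(P)=u_1}\sup_{[P]}f_n=e^{nP_X(\F)}a_{u_1},\quad \sum_{\pi(R)=u_2}\sup_{[R]}f_l=e^{lP_X(\F)}a_{u_2},$$
and
$$\sum_{Q\in B_{t-n-2k}(X)}\sup_{[Q]}f_{t-n-2k}=\sum_{w\in B_{t-n-2k}(Y)}g_{t-n-2k}(y_w)=e^{(t-n-2k)P_X(\F)}S_{t-n-2k},$$
and reinserting the overall factor $e^{-(t+l)P_X(\F)}$, the exponential pressures combine to $e^{-2kP_X(\F)}$, yielding the claim with $N=e^{-4C-2kP_X(\F)}m^2/M^3$. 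The only real obstacle is keeping the bookkeeping of the specification parameterization clean (so that the triple product factorizes correctly and no cylinder is counted twice); injectivity of the $(P,Q,R)\mapsto V$ correspondence is what makes this work. The hypothesis $t>n+2k$ ensures that the middle block of length $t-n-2k$ is nonempty so that $S_{t-n-2k}$ is defined.
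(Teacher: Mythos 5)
Your proof is correct and follows essentially the same route as the paper: the paper also splits the connecting block into a length-$k$ gluing word, a free middle block of length $t-n-2k$, and a second length-$k$ gluing word, and obtains the bound by applying the one-gluing estimate (\ref{keypro}) twice before summing over the middle block, which is exactly your computation packaged differently. The only differences are cosmetic — you carry out both gluings in $X$ in a single pass (your second gluing word in fact depends on the whole triple $(P,Q,R)$, which is harmless for injectivity), and your constant $N=e^{-4C-2kP_X(\F)}m^2/M^3$ differs from the paper's $\bigl(me^{-kP_X(\F)-2C}/M^2\bigr)^2$ only in the power of $M$, which is irrelevant since any positive $N$ suffices.
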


\begin{proof}
We synthesize the arguments used to prove Lemma~4.9 of~\cite{Y2}. Let $b_{k+1} \dots b_{t-n-k}$ be an allowable words of length $(t-n-2k)$ in $Y$ and 
call it $c$. 
Then there exists ${b_1 \dots b_k}, b_{t-n-k+1} \dots b_{t-n}$ such that 
$u_1 {b_{1}\dots b_k} c {b_{t-n-k+1}\dots b_{t-n}} u_2$ is allowable in $Y$. 
Denote ${b_{1}\dots b_k}$ by $u$ and 
${b_{t-n-k+1}\dots b_{t-n}}$ by $v$. Fix $u_1, u_2, c$ and $v$. By a proof similar to that of (\ref{keypro}), we obtain 
\begin{equation*}
 \sum_{u_1ucvu_2\in B_{l+t}(Y)}\sup\{\sum_{x\in E_{l+t}(u_1ucvu_2)}f_{l+t}(x)\}\geq 
\frac{e^{-kP_X(\F)-2C}m}{M^2}a_{u_1}a_{cvu_2}.
\end{equation*}
Now fix $u_1, u_2, c$. Summing over all allowable words $u, v$ such that $u_1ucvu_2$ is allowable, similar arguments 
to prove (\ref{keypro}) show that 
\begin{equation*}
 \sum_{u_1ucvu_2 \in B_{l+t}(Y)}\sup\{\sum_{x\in E_{l+t}(u_1ucvu_2)}f_{l+t}(x)\}\geq 
(\frac{me^{-kP_X(\F)-2C}}{M^2})^2 a_{u_1}a_{c}a_{u_2}.
\end{equation*}
Summing over all allowable words $u, c, v$ in $Y$ such that $u_1ucvu_2$ is allowable, we obtain 
\begin{equation*}
\sum_{u_1ucvu_2 \in B_{l+t}(Y) }\sup\{\sum_{x\in E_{l+t}(u_1ucvu_2)}f_{l+t}(x)\}
\geq (\frac{me^{-kP_X(\F)-2C}}{M^2})^2a_{u_1}a_{u_2}S_{t-n-2k}.
\end{equation*}
\end{proof}

\begin{lemma} \label{ergodic}
 If $\nu$ is an invariant Gibbs measure for $\widetilde \G$, then $\nu$ is ergodic.
\end{lemma}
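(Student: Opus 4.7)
The plan is to establish a quasi-mixing inequality of the form
$$\nu([u_1] \cap \sigma_Y^{-t}[u_2]) \geq c_0 \, \nu([u_1])\, \nu([u_2])$$
for every pair of cylinders $[u_1] \subset Y$ of length $n$, $[u_2] \subset Y$ of length $l$, and all $t > n+2k$, with a constant $c_0 > 0$ independent of the cylinders involved. Ergodicity will then follow from a standard approximation argument.

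First I would derive the quasi-mixing bound by combining Lemma \ref{short}, Lemma \ref{step2}, and the Gibbs property of $\nu$. Expanding the intersection as a disjoint union of cylinders $[u_1 b_1 \dots b_{t-n} u_2]$ and applying the lower Gibbs bound gives
\begin{equation*}
\nu([u_1]\cap \sigma_Y^{-t}[u_2]) \geq C_1 \, e^{-(l+t)P_Y(\widetilde{\G})} \sum_{u_1 b_1\dots b_{t-n} u_2 \in B_{l+t}(Y)} a_{u_1 b_1 \dots b_{t-n} u_2}.
\end{equation*}
Lemma \ref{short} bounds this last sum from below by $N \, a_{u_1} a_{u_2} S_{t-n-2k}$, and Lemma \ref{step2} gives $S_{t-n-2k} \geq e^{(t-n-2k)P_Y(\widetilde{\G})}/K_2$. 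The exponential factors collapse to $e^{-(n+l+2k)P_Y(\widetilde{\G})}$, and converting each $a_{u_i}$ back to $\nu([u_i])$ via the upper Gibbs bound yields the desired inequality with $c_0 = C_1 N e^{-2kP_Y(\widetilde{\G})}/(K_2 C_2^2)$.

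Suppose now for contradiction that $\nu$ is not ergodic, so that some $\sigma_Y$-invariant Borel set $E$ satisfies $0 < \nu(E) < 1$. Given $\epsilon > 0$, choose $n$ large enough that $E$ and $E^c$ can each be approximated in $\nu$-measure to within $\epsilon$ by disjoint finite unions $A$ and $B$ of length-$n$ cylinders with $A \cap B = \emptyset$. Summing the quasi-mixing bound over pairs of cylinders in $A \times B$ gives $\nu(A \cap \sigma_Y^{-t}B) \geq c_0 \nu(A) \nu(B)$ for all $t > n + 2k$. On the other hand, the invariance of $E$ (so $\sigma_Y^{-t}E = E$ up to a $\nu$-null set) forces
\begin{equation*}
\nu(A \cap \sigma_Y^{-t}B) \leq \nu(A \cap E^c) + \nu(\sigma_Y^{-t}(B \cap E)) \leq \nu(A \triangle E) + \nu(B \triangle E^c) < 2\epsilon,
\end{equation*}
whereas $\nu(A)\nu(B) \geq (\nu(E)-\epsilon)(\nu(E^c)-\epsilon)$ stays bounded away from zero as $\epsilon \to 0$. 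This contradicts $2\epsilon \geq c_0 \nu(A)\nu(B)$ for small $\epsilon$, so $\nu$ must be ergodic.

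The principal difficulty is producing the quasi-mixing estimate with a constant independent of the lengths of $u_1$ and $u_2$. This is precisely what Lemma \ref{short} supplies, leveraging the specification property inherited by $Y$, the almost additivity and bounded variation of $\F$ to absorb the length-$k$ bridging words, in tandem with the uniform partition-function estimate of Lemma \ref{step2}. The passage from the quasi-mixing inequality to ergodicity is the classical reduction via cylinder approximations.
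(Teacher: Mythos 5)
Your proposal is correct and follows essentially the same route as the paper: the identical quasi-mixing inequality $\nu([u_1]\cap\sigma_Y^{-t}[u_2])\geq c_0\,\nu([u_1])\nu([u_2])$ is derived from Lemma \ref{short}, Lemma \ref{step2} and the Gibbs bounds, with the same constant $C_1N/(K_2C_2^2e^{2kP_Y(\widetilde\G)})$. The only difference is that the paper delegates the final passage from quasi-mixing to ergodicity to the argument of Lemma 4.10 of \cite{Y2}, whereas you write out that standard cylinder-approximation step explicitly (and correctly).
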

\begin{proof}
Employing the same arguments used in the proof of Lemma~4.10 in~\cite{Y2}, we show that there exists $\widetilde C$ such that for each 
$t>n+2k$ and any two cylinder sets  $[i_1,\dots, i_n], [j_1\dots j_l]$ in $Y$, 
$\nu([i_1\dots i_n])\cap \sigma^{-t}_Y([j_1\dots j_l])\geq \widetilde C\nu([i_1\dots i_n])\nu([j_1\dots j_l])$.
Suppose that $\nu$ is an invariant Gibbs measure for $\widetilde \G$ satisfying (\ref{nugb}).
Denote $i_1\dots i_n$ by $u_1$ and $j_1\dots j_l$ by $u_2$. Then, using Lemma \ref{short},
\begin{align*}
&\nu([u_1]\cap \sigma^{-t}_Y[u_2])=\sum_{u_1b_1\dots b_{t-n}u_2\in B_{l+t}(Y)}\nu([u_1 b_1\dots b_{t-n}u_2])\\
&\geq C_1 e^{-(l+t)P_Y(\widetilde \G)}\sum_{u_1b_1\dots b_{t-n}u_2\in B_{l+t}(Y)}a_{u_1 b_1\dots b_{t-n}u_2}\\
&\geq C_1 N a_{u_1}a_{u_2}S_{t-n-2k}e^{-(l+t)P_Y(\widetilde \G)}
=\frac{C_1 Na_{u_1}a_{u_2}S_{t-n-2k}e^{(n-t)P_{Y}(\widetilde \G)}}{e^{(n+l)P_{Y}(\widetilde \G)}}\\
&\geq \frac{C_1 N}{K_2(C_2)^2 e^{2kP_Y(\widetilde \G)}}\nu([i_1\dots i_n])\nu([j_1\dots j_l]) (\text{by Lemma }\ref{step2}
\text{ and } (\ref{nugb})).
\end{align*}
\end{proof}
\noindent \textbf{Proof of Proposition \ref{gibbs1}}
By Lemmas \ref{inv} and \ref{ergodic}, we construct an invariant ergodic Gibbs equilibrium state $\mu$ for $\widetilde \G$. 
Now using the same proof as that of Theorem~5 in~\cite{B2006}, $\mu$ is the unique ergodic invariant measure satisfying the Gibbs property. 
Using the fact that $\H=\{\log \tilde g_ne^{C}\}_{n=1}^{\infty}$ is a subadditive potential, 
the arguments in the proof of
%agh:  which Theorem 5?  The same one mentioned two sentences earlier?
Theorem 5 in~\cite{B2006} show that $\mu$ is the unique equilibrium state for $\widetilde \G$ and it is mixing (also see the proof of 
Proposition~4.11 in~\cite{Y2}).  

\section{The characterizations of images under factor maps using relative pressure}\label{sectionrelativepressure}
In this section  we study the relation between a unique invariant Gibbs measure $\mu_{\F}$ for an almost additive potential 
$\F$ and its image under a factor map $\pi$ with 
connection to relative pressure. The purpose of this section is to prove Theorem \ref{general} and Proposition \ref{bow2}
which characterize $\pi\mu_{\F}$ 
as an equilibrium state for a relative pressure and
$\mu_{\F}$ as a relative equilibrium state. %for $\F$ over $\pi\mu_{\F}$%. All other related 
Corollaries \ref{level0} and \ref{saturatedcase} are special cases of Theorem \ref{general}.   

We use the relative variational principle to characterize the 
image $\pi\mu$. We continue to use the notation of $\F, \G,\widetilde \G$ from Section \ref{sectionfactor}.

Relative pressure has already been  defined for subadditive potentials (see Section \ref{background}). 
Here we  define relative pressure for almost additive potentials and 
show that the relative variational principle holds for almost additive potentials by simple observations. 

Let $(X, \sigma_X)$ and $(Y, \sigma_Y)$ be subshifts and $\pi:X\rightarrow Y$ be a factor map. 
For an almost additive potential  
$\Phi=\{\log \phi_n\}_{n=1}^{\infty}$ on $X$ satisfying 
$e^{-C}\phi_n(x)\phi_m(\sigma^n_Xx)\leq \phi_{n+m}(x)\leq e^C\phi_n(x)\phi_m(\sigma^{n}_Xx),$
let $\Phi_1=\{\log 
\phi_n e^C\}_{n=1}^{\infty}$. Then $\Phi_1$ is a subadditive potential on $X$. 

Hence for an almost additive potential $\Phi$ on $X$, we define 
for each $y\in Y, n \in\N$, 
$P_n(\sigma_X, \pi, \Phi, \epsilon)(y), 
P(\sigma_X, \pi, \Phi,\epsilon)(y)$ and $P(\sigma_X, \pi, \Phi)(y)$ in same manner as they are defined for 
a subadditive potential (see Page~\pageref{defirp} in Section~\ref{background}). 

It is clear by definition that $P(\sigma_X, \pi, \Phi)(y)=P(\sigma_X, \pi, \Phi_1)(y)$ for all $y\in Y$. 
Since $\lim_{n\rightarrow \infty}({1}/{n})\int \log \phi_n e^C d\mu=\lim_{n\rightarrow\infty}({1}/{n})\int \log \phi_n d\mu$ for all 
$\mu\in M(X, \sigma_X)$, applying the relative variational principle for subadditive potentials, we easily obtain the 
following relative variational principle for almost additive potentials (see Theorem \ref{rvpforsub} in Section~\ref{background}).
   
\begin{theorem}\label{vpaa}
Let $(X,\sigma_X), (Y, \sigma_Y)$ be subshifts, $ \pi:X\rightarrow Y$ a factor map and
$\Phi=\{\log \phi_n\}_{n=1}^{\infty}$ an almost additive potential on $X$.
Then for each $m\in M(Y, \sigma_Y)$,
\begin{equation*}
\int_{Y} P(\sigma_X, \pi, \Phi)dm=\sup\{h_{\mu}(\sigma_X)- h_{m}(\sigma_Y)+\lim_{n\rightarrow \infty}\frac{1}{n}\int_{X} \log \phi_n d\mu : \mu\in M(X, \sigma_X) \text{ and }
\pi\mu=m\}.
\end{equation*}
\end{theorem}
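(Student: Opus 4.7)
The plan is to deduce Theorem~\ref{vpaa} directly from the already-established relative variational principle for subadditive potentials (Theorem~\ref{rvpforsub}), using the observation highlighted in the paragraph just before the statement: multiplying an almost additive cocycle by $e^{C}$ turns it into a subadditive one without changing either the relative pressure or the Birkhoff-type averages. So the entire proof is a reduction plus two ``cosmetic'' identities.

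First I would set $\Phi_1=\{\log(\phi_n e^{C})\}_{n=1}^{\infty}$. The almost additivity inequality $\phi_{n+m}(x)\leq e^{C}\phi_n(x)\phi_m(\sigma_X^{n}x)$ rewrites as $e^{C}\phi_{n+m}(x)\leq (e^{C}\phi_n(x))(e^{C}\phi_m(\sigma_X^{n}x))$, so $\Phi_1$ is subadditive on $X$. Next I would verify the two identities. For the relative pressure, note that for every $(n,\epsilon)$-separated set $E\subseteq\pi^{-1}\{y\}$ one has $\sum_{x\in E}e^{C}\phi_n(x)=e^{C}\sum_{x\in E}\phi_n(x)$, so $P_n(\sigma_X,\pi,\Phi_1,\epsilon)(y)=e^{C}P_n(\sigma_X,\pi,\Phi,\epsilon)(y)$; taking $(1/n)\log$ and then $\limsup_{n}$ and $\lim_{\epsilon\to0}$ kills the additive $C/n$ term, yielding
\begin{equation*}
P(\sigma_X,\pi,\Phi)(y)=P(\sigma_X,\pi,\Phi_1)(y)\qquad\text{for every }y\in Y.
\end{equation*}
For the integrated averages, since each $\mu\in M(X,\sigma_X)$ is a probability measure, $\frac{1}{n}\int\log(\phi_ne^{C})\,d\mu=\frac{1}{n}\int\log\phi_n\,d\mu+\frac{C}{n}$, so the two limits agree whenever they exist (and both limits do exist by the subadditive ergodic theorem applied, respectively, to $\Phi_1$ and to the almost additive $\Phi$ which is in particular asymptotically subadditive).

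With these two identities in hand, I would apply Theorem~\ref{rvpforsub} to the subadditive potential $\Phi_1$. It gives, for each $m\in M(Y,\sigma_Y)$ admitting a preimage $\mu\in M(X,\sigma_X)$ with $\pi\mu=m$ and $\lim_n(1/n)\int\log(\phi_ne^{C})\,d\mu\neq-\infty$,
\begin{equation*}
\int_Y P(\sigma_X,\pi,\Phi_1)\,dm=\sup\Bigl\{h_{\mu}(\sigma_X)-h_m(\sigma_Y)+\lim_{n\to\infty}\tfrac{1}{n}\int\log(\phi_ne^{C})\,d\mu:\pi\mu=m\Bigr\}.
\end{equation*}
Substituting the two identities from the previous paragraph turns this verbatim into the formula claimed in Theorem~\ref{vpaa}.

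The only loose end is the hypothesis on $m$ required by Theorem~\ref{rvpforsub}, which does not appear in the statement of Theorem~\ref{vpaa}. I would dispose of it by noting that almost additivity forces this hypothesis to be automatic: from $e^{-C}\phi_n(x)\phi_m(\sigma_X^{n}x)\leq\phi_{n+m}(x)$ one iterates to obtain $\log\phi_n(x)\geq \sum_{i=0}^{n-1}\log\phi_1(\sigma_X^{i}x)-(n-1)C$, and since $\phi_1$ is continuous and positive on the compact space $X$ it is bounded below by some $\delta>0$, so $(1/n)\int\log\phi_n\,d\mu\geq\log\delta-C$ for every invariant $\mu$, ruling out $-\infty$. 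Together with the standard fact that the surjective factor map $\pi$ admits, for every $m\in M(Y,\sigma_Y)$, some $\mu\in M(X,\sigma_X)$ with $\pi\mu=m$, this removes the extra hypothesis and yields the theorem for all $m\in M(Y,\sigma_Y)$. There is no real obstacle here; the whole argument is a bookkeeping reduction, and the mild subtlety is simply checking that nothing in the passage $\Phi\leadsto\Phi_1$ alters either side of the relative variational principle.
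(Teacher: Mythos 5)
Your proposal is correct and follows essentially the same route as the paper: the author likewise passes to the subadditive potential $\Phi_1=\{\log(\phi_n e^{C})\}_{n=1}^{\infty}$, observes that $P(\sigma_X,\pi,\Phi)=P(\sigma_X,\pi,\Phi_1)$ and that the normalized integrals coincide, and then invokes Theorem~\ref{rvpforsub}. Your closing observation that almost additivity makes the finiteness hypothesis automatic is exactly the content of the remark the paper places immediately after the theorem, so nothing is missing.
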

\begin{remark}
Since $\Phi$ is almost additive, there exists $C_2>0$ such that for any $\mu\in M(X,\sigma_X)$, $\vert 
\lim_{n\rightarrow \infty}(1/n)\int\log \phi_{n}d\mu \vert \leq C_2$.
\end{remark}

Before we go further, we review some theorems 
that relate $\mu$ and $\pi\mu$ by using pressure theory.

\begin{theorem}\label{thm1}\cite{Y2}
Let $(X,\sigma_X), (Y, \sigma_Y)$ be subshifts and $\pi:X \rightarrow Y$ a factor map. Define
$F:Y\rightarrow \R$ by $F(y)=P(\sigma_X, \pi, 0)(y)$ and a subadditive potential 
$\Phi_s=\{\log \phi_n\}_{n=1}^{\infty}$ on $Y$.
Define $\Phi_s\circ\pi=\{\log (\phi_n\circ\pi)\}_{n=1}^{\infty}$. Then
\begin{align}
&\sup_{\mu \in M(X, \sigma_X)}\{h_{\mu}(\sigma_X)
-\int F \circ \pi d\mu +\lim_{n\rightarrow\infty}\frac{1}{n} \int \log (\phi_n\circ\pi)d \mu\} \label{gsumu}\\&=\sup_{m\in
M(Y,\sigma_Y)}\{h_{m}(\sigma_Y)+\lim_{n\rightarrow \infty}\frac{1}{n}\int \log \phi_n dm\} \label{gsumd}
\end{align}
for all $\Phi_s$. If $P_Y(\Phi_s)\neq -\infty$, Then $\mu $ is an equilibrium state for $-F
\circ \pi+\Phi_s\circ\pi$ if and only if (i) $\pi\mu$ is an
equilibrium state for $\Phi_s$ and (ii) $\mu$ is a relative equilibrium state for 0 over $\pi\mu$.
\end{theorem}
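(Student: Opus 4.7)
The plan is to derive the variational identity from the relative variational principle for subadditive potentials (Theorem~\ref{rvpforsub}) applied to the zero potential, together with elementary change-of-variable identities for the factor map $\pi$; the equilibrium-state characterization will then be a direct bookkeeping consequence.

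Begin with any $\mu \in M(X,\sigma_X)$ and write $m = \pi\mu \in M(Y,\sigma_Y)$. The change-of-variable formulas $\int F\circ\pi\, d\mu = \int F\, dm$ and $\int \log(\phi_n\circ\pi)\, d\mu = \int \log\phi_n\, dm$ reduce the functional in (\ref{gsumu}) evaluated at $\mu$ to
$$J(\mu) := h_\mu(\sigma_X) - \int F\, dm + \lim_{n\to\infty}\frac{1}{n}\int \log\phi_n\, dm.$$
Split the outer supremum over $M(X,\sigma_X)$ as a supremum over $m \in M(Y,\sigma_Y)$ followed by an inner supremum over $\mu$ with $\pi\mu = m$; since $\pi$ is a factor map, every such $m$ admits a lift. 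In the inner supremum only $h_\mu(\sigma_X)$ varies, and Theorem~\ref{rvpforsub} applied with $\Phi = 0$ yields $\sup_{\pi\mu = m} h_\mu(\sigma_X) = h_m(\sigma_Y) + \int F\, dm$. Substituting, the two occurrences of $\int F\, dm$ cancel and what remains is exactly the right-hand side (\ref{gsumd}).

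For the equilibrium-state equivalence, let $K(m)$ denote the expression inside the right-hand supremum. The calculation above shows
$$J(\mu) = K(\pi\mu) - \Bigl[h_{\pi\mu}(\sigma_Y) + \int F\, d\pi\mu - h_\mu(\sigma_X)\Bigr],$$
where the bracket is non-negative by Theorem~\ref{rvpforsub} and vanishes exactly when $\mu$ is a relative equilibrium state for $0$ over $\pi\mu$. The hypothesis $P_Y(\Phi_s)\neq -\infty$ ensures that $\sup J = \sup K$ is finite, so $J(\mu)$ attains this common supremum if and only if $K(\pi\mu)$ attains $\sup K$ (condition (i)) and simultaneously $J(\mu) = K(\pi\mu)$ (condition (ii)). Both directions of the biconditional follow at once.

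The main obstacle I anticipate lies not in the algebra but in two measurability/integrability checks: that the Borel function $F$ is integrable against every $\mu$ (handled by $0 \le F(y) \le h_{\mathrm{top}}(\sigma_X)$), and that the expression $-F\circ\pi + \Phi_s\circ\pi$ can legitimately be called a potential whose equilibrium states are defined through the functional $J(\mu)$ rather than through a subadditive sequence alone. Once these hedges are in place, the entire argument is essentially a single application of Theorem~\ref{rvpforsub} with $\Phi = 0$, wrapped around the trivial identity for pushforward integrals.
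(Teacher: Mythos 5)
Your proposal is correct and takes essentially the same route as the proof of Theorem 3.9 in \cite{Y2} (which this paper cites rather than reproves): decompose the supremum over $M(X,\sigma_X)$ fiberwise over $m=\pi\mu$, apply the relative variational principle (Theorem \ref{rvpforsub}) with $\Phi=0$ to identify $\sup_{\pi\mu=m}h_\mu(\sigma_X)=h_m(\sigma_Y)+\int F\,dm$, and read off the equilibrium-state equivalence from the resulting identity $J(\mu)=K(\pi\mu)-[\,h_{\pi\mu}(\sigma_Y)+\int F\,d\pi\mu-h_\mu(\sigma_X)\,]$. Your two flagged checks (boundedness and Borel measurability of $F$, and the role of $P_Y(\Phi_s)\neq-\infty$ in making the biconditional non-vacuous) are exactly the points that need, and admit, the justifications you indicate.
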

\begin{remark}
$-F\circ\pi$ above is called a (measurable) compensation function. 
Compensation functions were introduced by Boyle-Tuncel \cite{BT} and 
studied by Walters \cite{W2}. 
\end{remark}

\begin{coro}\label{impcoro}
Theorem \ref{thm1} holds for an almost additive potential $\Phi_s$ on $Y$, and $P_{Y}(\Phi_s)$ is finite.  
\end{coro}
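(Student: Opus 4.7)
The plan is a direct reduction to Theorem~\ref{thm1} for subadditive potentials. Given an almost additive $\Phi_s=\{\log \phi_n\}_{n=1}^{\infty}$ on $Y$ with almost additivity constant $C>0$, set $\Phi_1=\{\log(\phi_n e^{C})\}_{n=1}^{\infty}$. As already noted at the end of Section~\ref{background}, $\Phi_1$ is a subadditive potential on $Y$, and hence $\Phi_1\circ\pi$ is subadditive on $X$.

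The first step is to observe that $\Phi_s$ and $\Phi_1$ are indistinguishable for every limit appearing in Theorem~\ref{thm1}. For any $\mu\in M(X,\sigma_X)$ and $m\in M(Y,\sigma_Y)$,
\[
\lim_{n\to\infty}\frac{1}{n}\int \log(\phi_n e^{C})\,dm=\lim_{n\to\infty}\frac{1}{n}\int \log\phi_n\,dm,
\]
since the difference equals $C/n\to 0$, and the analogous identity holds for $\Phi_s\circ\pi$ versus $\Phi_1\circ\pi$ on $X$. Consequently the supremum expressions (\ref{gsumu}) and (\ref{gsumd}) take identical values whether computed for $\Phi_s$ or for $\Phi_1$, and the two potentials share the same equilibrium states on $Y$ (and likewise $\Phi_s\circ\pi$ and $\Phi_1\circ\pi$ share the same equilibrium states on $X$).

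The second step is to verify that $P_Y(\Phi_s)$ is finite, which gives $P_Y(\Phi_1)\neq -\infty$ and makes Theorem~\ref{thm1} directly applicable. Iterating the almost additivity inequality yields
\[
n\log\min_{y\in Y}\phi_1(y)-(n-1)C\leq \log\phi_n(y)\leq n\log\max_{y\in Y}\phi_1(y)+(n-1)C
\]
for all $y\in Y$ and $n\in\N$. Since $\phi_1$ is continuous and strictly positive on the compact subshift $Y$, both extrema are finite and nonzero. Counting at most $(\#B_n(Y))$ separated points in the subadditive pressure formula then forces $P_Y(\Phi_1)=P_Y(\Phi_s)\in\R$; the same bounds give a uniform constant $C_2$ with $\lvert\lim_{n\to\infty}(1/n)\int\log\phi_n d\mu\rvert\leq C_2$ for every $\mu\in M(X,\sigma_X)$, justifying the concluding remark.

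Finally, apply Theorem~\ref{thm1} to $\Phi_1$: one obtains the equality of (\ref{gsumu}) and (\ref{gsumd}) for $\Phi_1$, together with the equivalence characterizing equilibrium states for $-F\circ\pi+\Phi_1\circ\pi$. By the first step, the same equality and the same equilibrium-state equivalence transfer verbatim to $\Phi_s$ and to $-F\circ\pi+\Phi_s\circ\pi$, proving the corollary. The main (and only) obstacle is bookkeeping: confirming that each $C/n$ correction vanishes in the relevant limit and that the subadditive separated-set arguments for $\Phi_1$ continue to apply after renormalizing by $e^{-C}$. No new dynamical or pressure-theoretic input beyond Theorem~\ref{thm1} is required.
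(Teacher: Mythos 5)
Your proposal is correct and follows essentially the same route as the paper: replace $\Phi_s$ by the subadditive potential $\{\log(\phi_n e^{C})\}_{n=1}^{\infty}$, observe that the $C/n$ correction vanishes in every limit so the suprema and equilibrium states coincide, note finiteness of $P_Y(\Phi_s)$ from almost additivity, and invoke Theorem~\ref{thm1}. The only difference is that you spell out the iteration giving the uniform bounds on $\log\phi_n$, which the paper leaves implicit.
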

\begin{proof}
Let $\Phi_s=\{\log \phi_n\}_{n=1}^{\infty}$ on $Y$ be an almost additive potential satisfying
$e^{-C}\phi_n(x)\phi_m(\sigma^{n}_X x)\leq \phi_{n+m}(x)\leq e^{C} \phi_n(x)\phi_m(\sigma^{n}_X x)$ for some $C>0$. 
Then $\bar \Phi_s=\{\log e^{C}\phi_n\}_{n=1}^{\infty}$ is a
subadditive potential on $Y$. Replacing $\Phi_s$ in Theorem \ref{thm1} by $\bar \Phi_s$, we obtain 
$\lim_{n\rightarrow\infty}\frac{1}{n} \int \log (e^C\phi_n\circ\pi)d \mu= \lim_{n\rightarrow\infty}\frac{1}{n} \int \log (\phi_n\circ\pi)d \mu$
for all $\mu\in M(X, \sigma_X)$ and $\lim_{n\rightarrow \infty}\frac{1}{n}\int \log e^C\phi_n dm=
\lim_{n\rightarrow \infty}\frac{1}{n}\int \log \phi_n dm$ for all $m\in M(Y, \sigma_Y)$. Therefore, the equality 
in Theorem \ref{thm1} holds. Almost additivity of $\Phi_s$ implies that $P_Y(\Phi_s)$ is finite.  The rest of the theorem holds 
using $M_{-F\circ \pi+\Phi_s\circ \pi}(X, \sigma_X)= M_{-F\circ \pi+\bar\Phi_s\circ \pi}(X, \sigma_X)$ 
and $M_{\Phi_s}(Y, \sigma_Y)= M_{\bar\Phi_s}(Y, \sigma_Y)$, and making the same arguments as in the proof of Theorem \ref{thm1} 
(see Theorem~3.9 in~\cite{Y2}).
\end{proof}

Under the assumption of Theorem \ref{thm1}, for $y_1\dots y_n\in B_n(Y)$, denote by 
$\vert \pi^{-1}[y_1\dots y_n]\vert $ the cardinality of the set 
consisting of exactly one point from each cylinder 
$[x_1 \dots x_n]$ in $X$ such that $\pi([x_1\dots x_n])\subseteq [y_1\dots y_n]$. \label{defipi} 
%Here we recall the definition $D_n(y)$, a set consisting of one point from each nonempty set  
%$\pi^{-1}(y)\cap [x_1\dots x_n]$ in $X$. 
%We note that for $y\in Y, n\in \N, 
%\vert D_n(y) \vert \leq \vert \pi^{-1}[y_1\dots y_n]\vert$.
For $y\in Y$, let  $\tilde \phi_n(y)= \vert \pi^{-1}[y_1\dots y_n]\vert$.
Then $\tilde\Phi=\{\log \tilde \phi_n\}_{n=1}^{\infty}$ is a subadditive potential on $Y$.  We continue to use this notation
throughout this section.  

\begin{theorem}\cite{Y3}\label{thm2}
Suppose in Theorem~\ref{thm1} that $\pi:X \rightarrow Y$ is a factor map between irreducible sofic shifts. 
Define $\tilde \Phi\circ \pi=\{\log (\tilde \phi_n \circ \pi)\}_{n=1}^{\infty}$. Then
$$\lim_{n\rightarrow \infty}\frac{1}{n} \int \log (\tilde \phi_n \circ\pi) d\mu= 
\int F \circ \pi d\mu \text{ for all } \mu \in M(X, \sigma_X).$$ 
Hence we can replace $F$ in Theorem~\ref{thm1} by the subadditive potential $\tilde\Phi$.
\end{theorem}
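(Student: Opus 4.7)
The plan is to establish the pointwise identity $\lim_{n\to\infty}(1/n)\log\tilde\phi_n(y)=F(y)$ for $\nu$-almost every $y$, where $\nu=\pi\mu$, and then integrate. First, $\tilde\Phi=\{\log\tilde\phi_n\}_{n=1}^{\infty}$ is subadditive on $Y$: any allowable $(n+m)$-block $x_1\dots x_{n+m}$ in $X$ with $\pi(x_1\dots x_{n+m})=y_1\dots y_{n+m}$ splits into two allowable subblocks projecting to $y_1\dots y_n$ and $y_{n+1}\dots y_{n+m}$, so $\tilde\phi_{n+m}(y)\le\tilde\phi_n(y)\tilde\phi_m(\sigma_Y^n y)$. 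Consequently $\tilde\Phi\circ\pi$ is subadditive on $X$, and Kingman's subadditive ergodic theorem produces a $\sigma_Y$-invariant measurable function $\tilde F:Y\to\R$ such that $\lim_{n\to\infty}(1/n)\int\log(\tilde\phi_n\circ\pi)\,d\mu=\int \tilde F\circ\pi\,d\mu$ for every $\mu\in M(X,\sigma_X)$.

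Next, I apply formula (\ref{nicef}) to the trivial almost additive potential $\phi_n\equiv 1$ (which has bounded variation and depends on the first $n$ coordinates) to obtain $F(y)=P(\sigma_X,\pi,0)(y)=\limsup_{n\to\infty}(1/n)\log|D_n(y)|$. Since every cylinder $[x_1\dots x_n]$ meeting $\pi^{-1}(y)$ belongs to $\pi^{-1}[y_1\dots y_n]$, we have $|D_n(y)|\le\tilde\phi_n(y)$, and so $F\le\tilde F$ pointwise. The reverse comparison is the core of the argument. Using the irreducibility of the one-step SFT $X$, I aim to show that there exist constants $L\ge 1$ and $K\ge 1$ depending only on $X$ and $\pi$ such that every allowable $n$-block $x_1\dots x_n$ projecting to $y_1\dots y_n$ admits an extension by an allowable bridge $w_1\dots w_L$ with $\pi(w_1\dots w_L)=y_{n+1}\dots y_{n+L}$, such that $[x_1\dots x_n w_1\dots w_L]$ contains a preimage of $y$, and such that the assignment $[x_1\dots x_n]\mapsto[x_1\dots x_n w_1\dots w_L]$ is at most $K$-to-one. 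This yields $\tilde\phi_n(y)\le K\,|D_{n+L}(y)|$ on a full-$\nu$-measure set, whence $\tilde F\le F$ $\nu$-a.e.

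Finally, the uniform bound $0\le (1/n)\log\tilde\phi_n(y)\le\log|B_1(X)|$ allows dominated convergence, giving $\int F\circ\pi\,d\mu=\int \tilde F\,d\nu=\lim_{n\to\infty}(1/n)\int\log(\tilde\phi_n\circ\pi)\,d\mu$, which is the stated identity; the replacement in Theorem~\ref{thm1} then follows by substituting into the left-hand side of (\ref{gsumu}). The main obstacle is the bridge construction: an irreducible sofic shift need not enjoy specification, so one cannot invoke a mixing-type splicing argument directly. Instead, one must work with the irreducible SFT structure of $X$ and exploit the one-block / one-step nature of $\pi$ to produce a bridge whose projection matches the prescribed block $y_{n+1}\dots y_{n+L}$ in $Y$ while transitioning from state $x_n$ to a suitable symbol of some preimage $\bar x\in\pi^{-1}(y)$. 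The bridge length $L$ will be governed by the period of $X$, and the fan-in bound $K$ by the local SFT structure near $x_n$.
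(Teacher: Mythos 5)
The paper does not actually prove this statement here --- it is quoted from \cite{Y3}, and the essential ingredient is recorded separately as Lemma \ref{impPS} (Petersen--Shin \cite{PS}), specialized to $f=0$. Measured against that route, your outline has the right architecture (subadditivity of $\tilde\Phi$, Kingman's theorem, the trivial inequality $\vert D_n(y)\vert\leq\tilde\phi_n(y)$, dominated convergence), but the step you yourself flag as the main obstacle --- the bounded-length bridge lemma --- is not merely unproved: it is false, and no pointwise combinatorial argument can replace it. Concretely, let $X$ be the one-step irreducible SFT on $\{1,2,3,4,5\}$ with transitions $1\to\{1,2,3,4\}$, $2\to\{2,3,5\}$, $3\to\{2,3,5\}$, $4\to\{1,2,3,4,5\}$, $5\to\{1,2,3,4,5\}$, and let $\pi(1)=\pi(2)=\pi(3)=a$, $\pi(4)=b$, $\pi(5)=c$. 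Take any $y\in Y$ with $y_1\dots y_{m+1}=a^{m}b$. Every $x\in\pi^{-1}(y)$ has $x_{m+1}=4$, which forces $x_m=1$ and hence $x_1\dots x_m=1^{m}$ (inside $\pi^{-1}(a)=\{1,2,3\}$ only the symbol $1$ can precede $1$); so $\vert D_{m+L}(y)\vert\leq 5^{L}$ for every $L$. On the other hand $\pi^{-1}[a^{m}]$ contains all cylinders $[1^{j}v]$ with $v\in\{2,3\}^{m-j}$, so $\tilde\phi_m(y)\geq 2^{m-1}$; and the allowable word $2^{m}$ projects to $a^{m}$ but admits no admissible continuation whose next symbol projects to $b$ at all. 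Thus no constants $K,L$ depending only on $X$ and $\pi$ can give $\tilde\phi_n(y)\leq K\vert D_{n+L}(y)\vert$, and the pointwise identity $F(y)=\lim_{n}(1/n)\log\tilde\phi_n(y)$ genuinely fails at such $y$.

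What saves the theorem is that this identity holds \emph{almost everywhere} with respect to every invariant measure on $Y$: for a typical $y$ the pruning symbol recurs with positive frequency and repeatedly kills the non-extendable words, so the two counts share the same exponential growth rate. Establishing this is precisely the content of Lemma \ref{impPS} with $f=0$, and the Petersen--Shin argument is ergodic-theoretic (it exploits recurrence along the orbit of $y$), not a bounded-length splicing argument; irreducibility of $X$, as your plan uses it, gives you a connecting word but no control over what that word projects to. To complete the proof you should replace the bridge construction by an appeal to Lemma \ref{impPS} with $f=0$ (or reproduce its proof), and also say explicitly how the sofic case reduces to the SFT setting of that lemma; after that, your Kingman-plus-dominated-convergence frame does deliver the stated integral identity and the substitution into (\ref{gsumu}).
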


Now we first consider our question for a simple case. 
By using relative pressure, 
we will characterize the measure $\mu$ of maximal entropy as a relative equilibrium state for $0$ over $\pi \mu$. 
In relation to our work, we note that for an invariant ergodic measure Petersen, Quas and Shin \cite{PQS} and Allahbakhshi and Quas \cite{AQ} 
studied counting the number of preimage measures which have maximal entropy among 
all measures in the fibre.

\begin{coro}\label{level0}
Let $(X,\sigma_X)$, $(Y, \sigma_Y)$ be subshifts and $\pi:X\rightarrow Y$ be a factor map.
Suppose that $X$ has the specification property.
Let $\mu \in M(X, \sigma_X)$ be the unique measure of maximal entropy for $(X,\sigma_X)$ and let $\pi\mu=\nu\in M(Y, \sigma_Y)$.
Then $\mu$ is the unique relative equilibrium state for $0$
over $\nu$.  
\end{coro}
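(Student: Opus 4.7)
The plan is to argue directly from the definition of relative equilibrium state together with the uniqueness of the measure of maximal entropy on $X$. For the zero potential $\Phi = \{\log 1\}_{n=1}^{\infty}$, the condition on page \pageref{desfre} simplifies: all integrals $\int \log 1\, d\mu'$ vanish and $h_\nu(\sigma_Y)$ is a fixed constant, so $\bar\mu \in M(X,\sigma_X)$ is a relative equilibrium state for $0$ over $\nu$ precisely when $\pi\bar\mu = \nu$ and $h_{\bar\mu}(\sigma_X)$ attains $\sup\{h_{\mu'}(\sigma_X) : \mu' \in M(X,\sigma_X),\ \pi\mu' = \nu\}$. This reduces the corollary to showing that $\mu$ is the unique member of $M(X,\sigma_X)$ projecting to $\nu$ that achieves this fibre-wise entropy maximum.

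For existence, I would use that $\mu$ is the unique measure of maximal entropy, so $h_\mu(\sigma_X) = h_{\mathrm{top}}(\sigma_X) \geq h_{\mu'}(\sigma_X)$ for every $\mu' \in M(X,\sigma_X)$. Restricting in particular to those $\mu'$ with $\pi\mu' = \nu$, this shows $\mu$ attains the fibre supremum, so $\mu$ qualifies as a relative equilibrium state for $0$ over $\nu$. For uniqueness, suppose $\bar\mu$ is any relative equilibrium state for $0$ over $\nu$. Then $h_{\bar\mu}(\sigma_X) = h_\mu(\sigma_X) = h_{\mathrm{top}}(\sigma_X)$, so $\bar\mu$ is itself a measure of maximal entropy on $(X,\sigma_X)$. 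Because $X$ has the specification property, Theorem \ref{gibbs0} applied to the zero potential (which is almost additive with bounded variation, with $C = 0$ and $M = 1$) produces a unique measure of maximal entropy, forcing $\bar\mu = \mu$.

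I do not foresee a real obstacle: the content is essentially the observation that a globally entropy-maximizing measure must be fibre-wise entropy-maximizing in its own projection fibre, combined with the specification-based uniqueness of the measure of maximal entropy. An alternative approach would be to invoke Theorem \ref{thm1} with $\Phi_s$ equal to the zero subadditive potential on $Y$, noting that $\mu$ is trivially the unique equilibrium state for $-F\circ\pi$ modulo the additive constant $\int F\circ\pi\, d\mu$, and then reading off condition (ii) of Theorem \ref{thm1}; however, the direct argument above is more economical and avoids introducing the compensation function $F = P(\sigma_X, \pi, 0)$.
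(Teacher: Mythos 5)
Your argument is correct, and it is genuinely more direct than the one in the paper. You work straight from the definition of a relative equilibrium state on page \pageref{desfre}: for the zero potential the defining condition collapses to $\pi\bar\mu=\nu$ together with $h_{\bar\mu}(\sigma_X)=\sup\{h_{\mu'}(\sigma_X):\pi\mu'=\nu\}$, the fibre supremum is attained by $\mu$ because $\mu$ lies in the fibre and maximizes entropy globally, and any other fibre maximizer must then also be a measure of maximal entropy, hence equal to $\mu$ by the uniqueness guaranteed by the hypothesis (or by Theorem \ref{gibbs0} applied to the zero potential). The paper instead routes the proof through the compensation-function machinery: it invokes Theorem \ref{thm2} to replace $F=P(\sigma_X,\pi,0)$ by the counting potential $\widetilde\Phi=\{\log \vert\pi^{-1}[y_1\dots y_n]\vert\}_{n=1}^{\infty}$, applies Theorem \ref{main1} with $f_n\equiv 1$ to see that $\nu$ is the unique equilibrium state for $\G=\{\log\vert\pi^{-1}[y_1\dots y_n]\vert\}_{n=1}^{\infty}$, and then reads off condition (ii) of Theorem \ref{thm1} --- exactly the alternative you mention and set aside. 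What the paper's longer route buys is the additional identification of $\nu$ as the unique equilibrium state for the counting potential on $Y$, and a template that is reused almost verbatim in the more substantial results that follow (Theorem \ref{general} and Proposition \ref{bow2}), where the identification of the correct potential on $Y$ is the real content; your route buys economy and makes transparent that, for the zero potential, the corollary is simply the observation that a global entropy maximizer is automatically the fibre-wise maximizer over its own image measure. Both proofs are valid.
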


\begin{proof}
We apply Theorems \ref{thm1} and \ref{thm2}. Set $\phi_n=\tilde \phi_n$ for all $n\in \N$ in Theorem \ref{thm1}.
Then $\nu$ is an equilibrium state for 
$\G=\{\log \vert \pi^{-1}[y_1\dots y_n] \vert\}_{n=1}^{\infty}$. Applying Theorem \ref{main1} 
(set $f_n=1$ in Theorem \ref{main1}), it is the unique equilibrium state for $\G$.  Thus Theorem \ref{thm1} implies that
 $\mu$ is a relative equilibrium state for $0$ over $\nu$. Assume that 
there exists $\mu_1\neq \mu$ which is also a relative equilibrium state for $0$  
over $\nu$. Then, by Theorem \ref{thm1}, $\mu_1$ is also a measure of maximal entropy which is a contradiction.   
\end{proof}

Now we want to extend Corollary \ref{level0} for a unique invariant Gibbs measure $\mu$ for an almost additive potential $\F=
\{\log f_n\}_{n=1}^{\infty}$ on a subshift $X$. We observe that we cannot apply Theorem \ref{thm1}, because
in this theorem we only consider a sequence of continuous functions $-F\circ \pi+ \Phi \circ \pi$ on $X$, where  
$\Phi \circ \pi =\{\log (\phi_n \circ \pi)\}_{n=1}^{\infty}$, $\phi_n\in C(Y)$.  

\begin{theorem}\label{general}
Let $(X,\sigma_X)$, $(Y, \sigma_Y)$ be full shifts and $\pi:X\rightarrow Y$ be a factor map. 
Let $\mu_{\F} \in M(X, \sigma_X)$
be a unique invariant Gibbs measure for an almost additive potential $\F=\{\log f_n\}_{n=1}^{\infty}$ on $X$ with
bounded variation.
Let $\pi\mu_{\F}=\nu$. Then $\nu$ is the unique equilibrium state for the relative pressure $P(\sigma_X, \pi, \F)$
and $\mu_{\F}$ is the unique relative equilibrium state for $\F$ over $\nu$.  
\end{theorem}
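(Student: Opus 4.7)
The plan is to deduce both claims from the relative variational principle for almost additive potentials (Theorem~\ref{vpaa}) combined with the fact that $\mu_{\F}$ is the unique equilibrium state for $\F$ on $X$ (Theorem~\ref{main1}, whose hypothesis of specification is automatic for full shifts). First, taking the supremum of Theorem~\ref{vpaa} over $m\in M(Y,\sigma_Y)$ together with the variational principle (Theorem~\ref{BM}) gives
\begin{equation*}
\sup_{m\in M(Y,\sigma_Y)}\Bigl\{h_m(\sigma_Y)+\int_Y P(\sigma_X,\pi,\F)\,dm\Bigr\}=\sup_{\mu\in M(X,\sigma_X)}\Bigl\{h_\mu(\sigma_X)+\lim_{n\to\infty}\tfrac{1}{n}\int\log f_n\,d\mu\Bigr\}=P_X(\F).
\end{equation*}
Plugging $\mu_{\F}$ into the right-hand side of Theorem~\ref{vpaa} at $m=\nu$: since $\pi\mu_{\F}=\nu$ and $h_{\mu_{\F}}+\lim(1/n)\int\log f_n\,d\mu_{\F}=P_X(\F)$, this yields $h_\nu(\sigma_Y)+\int P(\sigma_X,\pi,\F)\,d\nu\geq P_X(\F)$, so by the supremum bound equality holds and $\nu$ is an equilibrium state for $P(\sigma_X,\pi,\F)$.

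For uniqueness of $\nu$, suppose $\nu'$ is another equilibrium state. Theorem~\ref{vpaa} rewrites this as
\begin{equation*}
\sup\Bigl\{h_\mu(\sigma_X)+\lim_{n\to\infty}\tfrac{1}{n}\int\log f_n\,d\mu:\mu\in M(X,\sigma_X),\ \pi\mu=\nu'\Bigr\}=P_X(\F).
\end{equation*}
I show the supremum is attained: choose a maximizing sequence $\{\mu_k\}$ and, by weak* compactness of $M(X,\sigma_X)$, extract a subsequential limit $\mu^{*}$, which satisfies $\pi\mu^{*}=\nu'$ by continuity of $\pi$. Upper semicontinuity of $\mu\mapsto h_\mu(\sigma_X)+\lim(1/n)\int\log f_n\,d\mu$ then forces $h_{\mu^{*}}(\sigma_X)+\lim(1/n)\int\log f_n\,d\mu^{*}=P_X(\F)$, so $\mu^{*}$ is an equilibrium state for $\F$ on $X$ and hence equals $\mu_{\F}$ by Theorem~\ref{main1}, giving $\nu'=\pi\mu_{\F}=\nu$. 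For the second claim, if $\mu'$ is any relative equilibrium state for $\F$ over $\nu$, then by definition together with the identity $\int P(\sigma_X,\pi,\F)\,d\nu=P_X(\F)-h_\nu(\sigma_Y)$ just established, $h_{\mu'}(\sigma_X)+\lim(1/n)\int\log f_n\,d\mu'=P_X(\F)$, so once again $\mu'=\mu_{\F}$ by Theorem~\ref{main1}.

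The main technical point is upper semicontinuity of the functional $\mu\mapsto h_\mu(\sigma_X)+\lim(1/n)\int\log f_n\,d\mu$ on $M(X,\sigma_X)$. Upper semicontinuity of entropy on subshifts is standard; for the almost additive term, the constant $C$ in~(\ref{aaconst}) together with $\sigma_X$-invariance of $\mu$ yields $a_{n+m}\leq a_n+a_m+C$ and $a_{n+m}\geq a_n+a_m-C$ for $a_n(\mu):=\int\log f_n\,d\mu$, whence
\begin{equation*}
\lim_{n\to\infty}\frac{a_n(\mu)}{n}=\inf_m\frac{a_m(\mu)+C}{m}=\sup_m\frac{a_m(\mu)-C}{m}
\end{equation*}
is simultaneously upper and lower semicontinuous in $\mu$, hence continuous. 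This continuity is the only real obstacle; the remainder of the argument is a rearrangement of Theorems~\ref{vpaa}, \ref{BM}, and~\ref{main1}.
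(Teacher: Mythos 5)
Your argument is correct, but for the first assertion it takes a genuinely different route from the paper. The paper identifies the relative pressure concretely: Lemma \ref{full} uses the full-shift hypothesis to show $P(\sigma_X,\pi,\F)(y)=\lim_{n\to\infty}\frac{1}{n}\log g_n(y)$, hence (via the subadditive ergodic theorem) $\int P(\sigma_X,\pi,\F)\,dm=\lim_{n\to\infty}\frac{1}{n}\int\log g_n\,dm$ for every $m\in M(Y,\sigma_Y)$; equilibrium states for $P(\sigma_X,\pi,\F)$ are then exactly equilibrium states for $\G$, and uniqueness of $\nu$ is imported wholesale from Theorem \ref{main1}, i.e.\ ultimately from the Gibbs construction of Proposition \ref{gibbs1}. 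You never make that identification: you obtain existence by plugging $\mu_{\F}$ into Theorem \ref{vpaa} at $m=\nu$, and uniqueness by lifting a putative second equilibrium state $\nu'$ to a measure in its fibre attaining $P_X(\F)$, using weak* compactness, upper semicontinuity of entropy on subshifts, and your inf/sup sandwich showing that $\mu\mapsto\lim_{n\to\infty}\frac{1}{n}\int\log f_n\,d\mu$ is weak*-continuous for an almost additive $\F$ --- all of which is sound. Your route is softer, uses the full-shift hypothesis only through specification, and avoids relying on the uniqueness machinery for $\G$; what it gives up is the explicit formula $P(\sigma_X,\pi,\F)=\lim_{n\to\infty}\frac{1}{n}\log g_n$, which is precisely what lets the paper literally replace $\G$ of Theorem \ref{main1} by the relative pressure. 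Your treatment of the second assertion coincides with the paper's. Two minor points: the fact that the unique invariant Gibbs measure for $\F$ is the unique equilibrium state is Theorem \ref{gibbs0} rather than Theorem \ref{main1}; and you should record explicitly (as is immediate from your identity $\int P(\sigma_X,\pi,\F)\,d\nu=P_X(\F)-h_\nu(\sigma_Y)$) that $\mu_{\F}$ itself attains the fibre supremum, so that a relative equilibrium state over $\nu$ actually exists before you prove it is unique.
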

\begin{remark}
Related work is found in Barral and Feng \cite{BF} in a more general
setting. 
Our result differs slightly from theirs due to our particular setting.
\end{remark}

%\begin{remark}
%The related result is found in Barral and Feng \cite{BF} under a more general setting. 
%The result here is slightly different from their work because of our particular setting.   
%\end{remark}

In order to show Theorem \ref{general}, we need the following simple lemmas. 

\begin{lemma}\label{obvious}
Let $\F=\{\log f_n\}_{n=1}^{\infty}$ be almost additive on a subshift $X$ with bounded variation and define $\F_1=\{\log f_ne^{C}\}_{n=1}
^{\infty}$. For $y\in Y$, let $D_n(y)$ be a set consisting of a point from each cylinder $[x_1\dots x_n]$ such that
$[x_1\dots x_n]\cap \pi^{-1}\{y\}\neq \emptyset$. Then 
$$P(\sigma_X, \pi, \F)(y)= P(\sigma_X, \pi, \F_1)(y)=\limsup_{n\rightarrow\infty}\frac{1}{n}
\log (\sum_{x\in D_n(y)} f_n(x)).$$ 
\end{lemma}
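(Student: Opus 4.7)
The plan is to prove the two equalities in sequence, exploiting the fact that passing from $\F$ to $\F_1$ is only a bounded multiplicative perturbation and that $\F_1$, unlike $\F$ itself, fits the hypotheses of formula~\eqref{nicef} stated earlier in the paper.

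First I would establish that $P(\sigma_X, \pi, \F)(y) = P(\sigma_X, \pi, \F_1)(y)$ directly from the definition of the relative pressure via $(n,\epsilon)$-separated subsets of $\pi^{-1}\{y\}$. Since $f_n(x)e^{C}=e^{C}f_n(x)$, for every $(n,\epsilon)$-separated set $E$ we have $\sum_{x\in E}f_n(x)e^{C}=e^{C}\sum_{x\in E}f_n(x)$, so $P_n(\sigma_X, \pi, \F_1, \epsilon)(y) = e^{C}P_n(\sigma_X, \pi, \F, \epsilon)(y)$. Taking $(1/n)\log$ yields an additive correction $C/n$, which vanishes as $n\to\infty$; hence the $\limsup$ in $n$ and the subsequent limit in $\epsilon$ are unaffected, giving the first equality.

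Next I would verify that $\F_1$ satisfies the hypotheses of formula~\eqref{nicef}. By the almost additive property of $\F$, we have $e^{-C}f_n(x)f_m(\sigma^n_X x) \leq f_{n+m}(x) \leq e^{C}f_n(x)f_m(\sigma^n_X x)$. Setting $\phi_n := f_n e^{C}$, the upper inequality rearranges to the submultiplicative bound $\phi_{n+m}(x) \leq \phi_n(x)\phi_m(\sigma^n_X x)$, so $\F_1$ is subadditive. The lower inequality gives $e^{-2C}\phi_n(x)\phi_m(\sigma^n_X x)\leq \phi_{n+m}(x)$, which is exactly the hypothesis needed for~\eqref{nicef} (with constant $2C$ in place of $C$). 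Moreover, $\F_1$ has bounded variation since $\F$ does and multiplication by a constant preserves $M_n$.

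Applying~\eqref{nicef} to the subadditive potential $\F_1$ therefore yields
\[
P(\sigma_X, \pi, \F_1)(y)=\limsup_{n\to\infty}\frac{1}{n}\log\Bigl(\sum_{x\in D_n(y)}f_n(x)e^{C}\Bigr)=\limsup_{n\to\infty}\frac{1}{n}\log\Bigl(\sum_{x\in D_n(y)}f_n(x)\Bigr),
\]
where the constant $e^{C}$ again drops out after dividing by $n$. Combining this with the first step completes the proof. I do not anticipate a genuine obstacle here; the whole lemma amounts to checking that the $e^{C}$ factor is harmless and that $\F_1$ lands in the range of applicability of~\eqref{nicef}.
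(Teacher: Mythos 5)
Your proof is correct and follows essentially the same route as the paper: the first equality is read off from the definition of relative pressure (the $e^{C}$ factor contributes only $C/n$), and the second follows by checking that $\F_1$ is subadditive with $e^{-2C}\phi_n(x)\phi_m(\sigma^n_X x)\leq \phi_{n+m}(x)$ and then invoking~(\ref{nicef}). You have simply filled in the verifications the paper leaves implicit.
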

\begin{proof}
The first equality is obvious from the definition of relative pressure. To see the second equality, 
we note that $\F_1$ is subadditive and 
$e^{-2C}(f_n(x)e^{C})(f_{m}(\sigma^{n}_Xx)e^{C})
\leq f_{n+m}(x)e^{C}$. Therefore, the result follows immediately from (\ref{nicef}).  
\end{proof}

\begin{lemma}\label{full}
Let $(X,\sigma_X)$ and $(Y, \sigma_Y)$ be full shifts and 
let $\pi:X\rightarrow Y$ be a factor map. 
%agh:  something's wrong with the next sentence; there's no verbal clause.
%Or perhaps there's a misplaced full stop?  From $X$ onwards doesn't make sense.
For an almost additive potential $\F=\{\log f_n\}_{n=1}^{\infty}$ on $X$ with bounded variation, 
$$P(\sigma_X, \pi, \F)(y)=\limsup_{n\rightarrow\infty}\frac{1}{n}
\log (\sum_{x\in D_n(y)} f_n(x))=\limsup_{n\rightarrow\infty}\frac{1}{n}\log g_n(y)
=\lim_{n\rightarrow\infty}\frac{1}{n}\log g_n(y).$$
\end{lemma}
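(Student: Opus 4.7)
The first equality is immediate from Lemma~\ref{obvious}: for the almost additive $\F$ with bounded variation, $P(\sigma_X, \pi, \F)(y) = \limsup_n (1/n)\log\sum_{x\in D_n(y)} f_n(x)$. For the second equality, I exploit that $X, Y$ are full shifts and $\pi$ is $1$-block: both $D_n(y)$ and $E_n(y)$ pick one representative from each cylinder $[x_1\dots x_n]$ with $\pi(x_i)=y_i$ for $1\le i\le n$ (the full-shift structure of $X$ ensures each such prefix extends to a point of $\pi^{-1}\{y\}$, and $\pi([x_1\dots x_n])\subseteq[y_1\dots y_n]$ reduces to $\pi(x_i)=y_i$ since $\pi$ is $1$-block). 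Thus $\sum_{x\in D_n(y)} f_n(x)$ and $g_n(y) = \sup_{E_n(y)} \sum_{x\in E_n(y)} f_n(x)$ differ only in the choice of representative per cylinder, and bounded variation of $\F$ gives $M^{-1} g_n(y) \le \sum_{x\in D_n(y)} f_n(x) \le g_n(y)$. Applying $(1/n)\log$ and $\limsup$ absorbs the factor $M$.

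For the third equality I first derive two-sided almost multiplicativity of $g_n$ on $Y$. The upper estimate $g_{n+m}(y) \le e^C g_n(y)\, g_m(\sigma_Y^n y)$ is the content of Lemma~\ref{subd} after stripping the $e^{-nP_X(\F)}$ normalizations. For the matching lower estimate, the full-shift structure of $X$ allows concatenation: given representatives $x \in E_n(y)$ in $[x_1\dots x_n]$ and $x' \in E_m(\sigma_Y^n y)$ in $[x'_1\dots x'_m]$, the sequence beginning $x_1\dots x_n x'_1 \dots x'_m$ sits in a valid $(n+m)$-cylinder over $[y_1\dots y_{n+m}]$. Almost additivity of $\F$ (with constant $C$) combined with bounded variation applied to $f_n$ on $[x_1\dots x_n]$ and $f_m$ on $[x'_1\dots x'_m]$ gives $f_{n+m}(z) \ge (e^{-C}/M^2)\, f_n(x)\, f_m(x')$; summing over the product index set yields $g_{n+m}(y) \ge (e^{-C}/M^2)\, g_n(y)\, g_m(\sigma_Y^n y)$. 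Hence $\{\log g_n\}$ is almost additive on $Y$ with uniform defect $|\log g_{n+m}(y) - \log g_n(y) - \log g_m(\sigma_Y^n y)| \le C + 2\log M$.

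The main obstacle is to promote this uniform cocycle defect to pointwise existence of $\lim_n (1/n)\log g_n(y)$ at every $y \in Y$ (the natural Kingman-type arguments only give $\pi\mu$-a.e.\ convergence). My plan is to exploit the Gibbs property for $\pi\mu$ under $\widetilde\G$ (Theorem~\ref{imp1}): together with $P_Y(\widetilde\G)=0$ (Proposition~\ref{imp}) it yields $\log g_n(y) = nP_X(\F) + \log \pi\mu([y_1\dots y_n]) + O(1)$ uniformly in $y$ and $n$. Feeding this identification back into the almost-additive estimate above, the constant defect lets me compare $(1/n)\log g_n(y)$ at scales $n$ and $n+m$ with an $O(1/n)$ loss independent of $y$; a pinching argument then forces $\limsup$ and $\liminf$ to coincide at each $y$, and the common value is identified with $P(\sigma_X, \pi, \F)(y)$ via the first two equalities. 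Carrying out this pinching uniformly in $y$, rather than only along a $\pi\mu$-generic point, is the delicate step, and it is precisely the universal constants in the Gibbs bound and the almost-additive defect that make it feasible.
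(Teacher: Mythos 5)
Your handling of the first two equalities is correct and is essentially the paper's argument: for full shifts and a one-block $\pi$, the cylinders indexing $D_n(y)$ and those indexing $E_n(y)$ are the same (both are the words $x_1\dots x_n$ with $\pi(x_i)=y_i$), so the two sums differ only in the choice of representatives and, after taking the supremum defining $g_n$, by at most the bounded-variation constant $M$, which disappears under $\frac1n\log$. Your two-sided almost-multiplicativity estimate $e^{-C}M^{-2}g_n(y)g_m(\sigma_Y^ny)\le g_{n+m}(y)\le e^{C}g_n(y)g_m(\sigma_Y^ny)$ is also correct and is a useful observation.

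The third equality is where your argument has a genuine gap, and the step you yourself flag as ``delicate'' cannot be carried out, because pointwise existence of $\lim_{n}\frac1n\log g_n(y)$ at \emph{every} $y\in Y$ is false in general. Take $f_n\equiv 1$ (so $\F$ is additive with $M=1$) and let $\pi$ collapse two symbols of $X$ to a single symbol $a$ of $Y$ while acting injectively on the rest; then $g_n(y)=2^{N_a(y_1\dots y_n)}$, where $N_a$ counts occurrences of $a$, and $\frac1n\log g_n(y)$ is a Birkhoff average which oscillates for a point $y$ whose symbol frequencies do not converge. Your Gibbs-property reduction does not escape this: the identity $\log g_n(y)=nP_X(\F)+\log\pi\mu([y_1\dots y_n])+O(1)$ merely converts the question into pointwise convergence of $\frac1n\log\pi\mu([y_1\dots y_n])$, a Shannon--McMillan--Breiman-type statement that holds only almost everywhere; and the almost-additive defect compares $\log g_{n+m}(y)$ with $\log g_n(y)+\log g_m(\sigma_Y^ny)$, where the second term lives at the shifted point, so no pinching of $\limsup$ against $\liminf$ at a fixed $y$ can follow from it. For what it is worth, the paper's own proof of this step is a one-line appeal to the subadditivity of $\{\log g_ne^{C}\}_{n=1}^{\infty}$, i.e.\ implicitly to Kingman's theorem, which gives $\limsup=\lim$ only $m$-almost everywhere for each $m\in M(Y,\sigma_Y)$; that almost-everywhere version is all that is actually used afterwards, since Theorem \ref{general} only integrates $P(\sigma_X,\pi,\F)$ against invariant measures. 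The honest repair, for you as for the paper, is to assert and prove the almost-everywhere statement rather than the pointwise one.
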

\begin{proof}
The first equality is obvious from Lemma~\ref{obvious}. Since $X$ is a full shift, given a set $D_n(y)$, we can find
a set $E_n(y)$ such that $E_n(y)=D_n(y)$. Conversely, given  a set $E_n(y)$, we can construct a set $D_n(y)$ such that  $D_n(y)=E_n(y)$. Thus we have the second equality. The third equality is clear because the sequence
$\{\log g_ne^{C}\}_{n=1}^{\infty}$ is a subadditive potential on $Y$.
\end{proof}

\noindent \textbf{Proof of Theorem \ref{general}}\\
The first statement of Theorem \ref{general} is proved by Lemma \ref{full}.
Clearly,
\begin{align*}
&h_{\mu_{\F}}(\sigma_X)+\lim_{n\rightarrow\infty}\frac{1}{n} \int \log f_n d\mu_{\F} =
\sup\{h_{\bar \mu}(\sigma_X)+\lim_{n\rightarrow\infty}\frac{1}{n} \int \log f_n d\bar \mu:\bar \mu\in M(X, \sigma_X)\}\\
&=\sup\{h_m(\sigma_Y)+\int P(\sigma_X, \pi, \F) dm:m\in M(Y, \sigma_Y)\} 
(\text {by Theorem }\ref{main1} \text{ and Lemma } \ref{full}) \\
&=h_{\nu}(\sigma_Y)+\sup\{h_{\bar\mu}(\sigma_X)-h_{\nu}(\sigma_Y)+ 
\lim_{n\rightarrow\infty}\frac{1}{n}\int \log f_n d\bar \mu:\pi\bar \mu=\nu\} (\text{by Theorems } \ref{main1} \text{ and } \ref{vpaa})\\
&=\sup \{h_{\bar \mu}(\sigma_X)+\lim_{n\rightarrow\infty}\frac{1}{n}\int \log f_n d\bar \mu:\pi \bar \mu=\nu\}.
\end{align*}
This proves that $\mu_{\F}$ is a relative equilibrium state for $\F$ over $\nu$. 
To show the uniqueness, 
assume that there exists $\mu_1
\neq \mu_{\F}$ which is a relative equilibrium state for $\F$ over $\nu$. 
Then, using the equations above, $\mu_1$ is also an equilibrium state for $\F$, which is a contradiction.
This completes the proof.
%agh:  missing the end-of-proof square

\begin{coro}\label{saturatedcase}
Under the assumptions of Theorem \ref{general}, suppose that $\F=\{\log (\phi_n\circ \pi)\}_{n=1}^{\infty}$, $\phi_n\in C(Y)$ 
for all $n\in\N$. Then $\mu_{\F}$ is the unique relative equilibrium state for $0$ over $\nu$.                                     
\end{coro}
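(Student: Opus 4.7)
The plan is to reduce the corollary to Theorem~\ref{general} by exploiting the fact that, when $\F=\{\log(\phi_n\circ\pi)\}$ is a pullback, the $\F$-contribution to the relative-equilibrium-state functional is constant along the fiber $\{\bar\mu\in M(X,\sigma_X):\pi\bar\mu=\nu\}$. In particular, maximizing the $\F$-functional over this fiber is the same as maximizing the entropy $h_{\bar\mu}(\sigma_X)$, which is precisely the defining condition for a relative equilibrium state for $0$ over $\nu$.

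The key step is a change-of-variables computation. For any $\bar\mu\in M(X,\sigma_X)$ with $\pi\bar\mu=\nu$,
\[
\int \log f_n\,d\bar\mu \;=\; \int \log(\phi_n\circ\pi)\,d\bar\mu \;=\; \int \log\phi_n\,d\nu,
\]
so the integral depends only on $\nu$ and not on the choice of preimage $\bar\mu$. Since $\F$ is almost additive with bounded variation, Theorem~\ref{BM} (together with the standard subadditive ergodic theorem argument) guarantees that the limit
\[
A \;:=\; \lim_{n\to\infty}\frac{1}{n}\int \log f_n\,d\bar\mu
\]
exists, and by the computation above this limit equals $\lim_{n\to\infty}(1/n)\int\log\phi_n\,d\nu$, a constant that is independent of the choice of $\bar\mu\in\pi^{-1}\{\nu\}$.

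Substituting this constant into the definition of a relative equilibrium state for $\F$ over $\nu$ (see page~\pageref{desfre}), the condition that $\bar\mu$ maximize
\[
h_{\bar\mu}(\sigma_X)-h_{\nu}(\sigma_Y)+\lim_{n\to\infty}\frac{1}{n}\int\log f_n\,d\bar\mu
\;=\;h_{\bar\mu}(\sigma_X)-h_{\nu}(\sigma_Y)+A
\]
over $\{\bar\mu:\pi\bar\mu=\nu\}$ is equivalent, since $A$ and $h_\nu(\sigma_Y)$ are constants, to maximizing $h_{\bar\mu}(\sigma_X)$ over the same fiber. But that is exactly the defining condition for a relative equilibrium state for $0$ over $\nu$. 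Hence the two classes of relative equilibrium states coincide for this $\nu$.

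Applying Theorem~\ref{general} to $\F$, we know $\mu_{\F}$ is the unique relative equilibrium state for $\F$ over $\nu$; by the equivalence just established, $\mu_{\F}$ is the unique relative equilibrium state for $0$ over $\nu$. There is no substantive obstacle: the only subtle point is verifying that the limit $A$ exists uniformly on the fiber, which follows directly from the almost additivity and bounded variation of $\F$.
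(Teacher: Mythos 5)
Your proposal is correct and follows essentially the same route as the paper's (much terser) proof: the paper likewise observes that $\lim_{n}\frac{1}{n}\int\log(\phi_n\circ\pi)\,d\bar\mu=\lim_{n}\frac{1}{n}\int\log\phi_n\,d\nu$ is constant on the fiber $\{\bar\mu:\pi\bar\mu=\nu\}$, so that relative equilibrium states for $\F$ over $\nu$ coincide with those for $0$ over $\nu$, and then invokes Theorem \ref{general}. Your additional remarks on the existence of the limit $A$ are a harmless elaboration of the same argument.
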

\begin{proof}
Let $\mu\in M(Y, \sigma_Y)$ be fixed. For any $\mu\in M(X, \sigma_X)$ such that $\pi\mu=m$, 
$\lim_{n\rightarrow \infty}\frac{1}{n}\int \log (\phi_n\circ \pi) d\mu= 
\lim_{n\rightarrow \infty}\frac{1}{n}\int \log \phi_n dm$.
Hence we obtain the result.
\end{proof}

In the next proposition,  we will apply Theorem \ref{general} 
in order to study the relation between a unique invariant Gibbs measure for a function $f\in Bow(X)$ and its image under a factor map.
To do this, we use the following lemma by Petersen and Shin \cite{PS}.

\begin{lemma}\cite{PS}\label{impPS}
Let $(X, \sigma_X), (Y,\sigma_Y)$ be irreducible shifts of finite type
and $\pi: X\rightarrow Y$ be a factor map. Let $f_n(x)=e^{f(x)+\dots + f(\sigma^n_Xx)}$.
For each $f\in C(X)$, 
$$P(\sigma_X, \pi, f)(y)=\limsup_{n\rightarrow\infty} \frac{1}{n}\log (\sum_{x\in E_n(y)}f_n(x)),$$ 
almost everywhere with respect to every $m\in M(Y, \sigma_Y)$.
\end{lemma}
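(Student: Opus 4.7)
The plan is to prove the two inequalities separately, using the uniform continuity of $f$---equivalently, the fact that the $k$-th variation $V_k(f)\to 0$ as $k\to\infty$---as the main analytic tool. Write $\Lambda(y):=\limsup_{n\to\infty}(1/n)\log\sum_{x\in E_n(y)}f_n(x)$. Two choices of $E_n(y)$ differ, cylinder-by-cylinder, by at most a multiplicative factor $e^{\sum_{j=1}^{n}V_j(f)}=e^{o(n)}$, by Ces\`aro summation of $V_j(f)\to 0$; thus $\Lambda(y)$ is independent of the particular choice of representatives, which is what makes the claim well-posed.

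For $\Lambda(y)\le P(\sigma_X,\pi,f)(y)$, I would observe that $E_n(y)$ is automatically $(n,1/2)$-separated in $X$: two distinct length-$n$ cylinders must disagree at some coordinate $j\le n$, making $d(\sigma^{j-1}x,\sigma^{j-1}x')=1>1/2$. On the $m$-conull set of $y$ for which every length-$n$ cylinder $[x_1\dots x_n]$ with $\pi(x_1\dots x_n)=y_1\dots y_n$ actually meets $\pi^{-1}\{y\}$, one may arrange $E_n(y)\subseteq\pi^{-1}\{y\}$, and then $\sum_{x\in E_n(y)}f_n(x)\le P_n(\sigma_X,\pi,f,1/2)(y)$ yields the inequality upon taking $(1/n)\log\limsup$.

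For $P(\sigma_X,\pi,f)(y)\le\Lambda(y)$, I would fix $k\in\N$ and $\epsilon\ge 2^{-(k+1)}$. Any $(n,\epsilon)$-separated subset $E\subseteq\pi^{-1}\{y\}$ has at most one representative per length-$(n+k)$ cylinder of $X$ meeting $\pi^{-1}\{y\}$; each length-$n$ prefix admits at most $|A|^{k}$ length-$(n+k)$ extensions in $X$ (where $A$ is the alphabet of $X$); and on each length-$n$ cylinder the oscillation of $f_n$ is bounded by $e^{\sum_{j=1}^{n}V_j(f)}$. Combining these three estimates yields
\begin{equation*}
P_n(\sigma_X,\pi,f,\epsilon)(y)\le|A|^{k}\,e^{\sum_{j=1}^{n}V_j(f)}\sum_{x\in E_n(y)}f_n(x).
\end{equation*}
Taking $(1/n)\log$ and $\limsup_{n\to\infty}$---using that $k$ and $|A|$ are fixed and that $(1/n)\sum_{j=1}^{n}V_j(f)\to 0$---gives $P(\sigma_X,\pi,f,\epsilon)(y)\le\Lambda(y)$; letting $\epsilon\to 0$ completes the inequality.

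The main obstacle is the $m$-conull assertion in the lower bound: for an arbitrary $m\in M(Y,\sigma_Y)$ and $m$-a.e.\ $y$, one must show that every length-$n$ allowable word mapping to $y_1\dots y_n$ extends to some point of the fiber $\pi^{-1}\{y\}$. Since $X$ is an irreducible SFT and $\pi$ is one-block, such extensions typically exist, but the argument must accommodate an arbitrary invariant measure $m$ rather than a specific well-adapted one; a Borel--Cantelli--type argument exploiting the shift-invariance of $m$ together with the combinatorics of one-block factor maps between SFTs should do the job, but this is the delicate measure-theoretic point on which the ``almost everywhere'' qualifier in the statement depends.
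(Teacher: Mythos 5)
Your upper-bound direction ($P(\sigma_X,\pi,f)(y)\le\Lambda(y)$, via $(n+k)$-cylinders and the Ces\`aro decay of $V_j(f)$) is fine and indeed holds for \emph{every} $y$. The gap is in the other direction, and it is not merely a missing technical verification: the intermediate claim you rely on --- that for $m$-a.e.\ $y$ every allowable word $x_1\dots x_n$ with $\pi(x_1\dots x_n)=y_1\dots y_n$ extends to a point of the fiber $\pi^{-1}\{y\}$ --- is false in general. Take $X$ the irreducible one-step SFT on $\{a,b,c\}$ with transitions $a\to a$, $a\to b$, $b\to a$, $a\to c$, $c\to a$, and the one-block map $\pi(a)=\pi(b)=0$, $\pi(c)=1$, so that $Y$ is the golden mean shift. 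The word $b$ maps to $0$ but can only be followed by $a$, so $[b]\cap\pi^{-1}\{y\}=\emptyset$ for every $y\in[01]$; more generally $a^{n-1}b$ maps to $0^{n}$ but meets no fiber over any $y\in[0^{n}1]$. Since $[01]$ has positive measure for, say, the Parry measure on $Y$, the set on which your hypothesis holds for all $n$ is (for this example) contained in an $m$-null set for every ergodic $m\ne\delta_{0^\infty}$. Consequently no Borel--Cantelli argument can rescue the claim; the set you want to be conull is essentially empty.

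What is actually true --- and is the entire content of the lemma --- is the weaker statement that the non-extendable words contribute nothing to the exponential growth rate of the sum, for $m$-a.e.\ $y$. The paper does not prove this (it quotes Petersen and Shin), and their argument is of a genuinely different nature from your pointwise combinatorial one: one first checks that both $y\mapsto P(\sigma_X,\pi,f)(y)$ and $y\mapsto\Lambda(y)$ are $m$-a.e.\ $\sigma_Y$-invariant (for $\Lambda$ this uses the one-sided inequality $\Lambda\le\Lambda\circ\sigma_Y$ together with invariance of $m$), reduces to ergodic $m$ where both are a.e.\ constant, and then compares the two constants by integrating: $\int P(\sigma_X,\pi,f)\,dm$ is identified by the Ledrappier--Walters relative variational principle, and an empirical-measure/counting argument shows $\int\Lambda\,dm$ is bounded by the same supremum $\sup\{h_\mu(\sigma_X)-h_m(\sigma_Y)+\int f\,d\mu:\pi\mu=m\}$. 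Combined with the everywhere-valid inequality $P\le\Lambda$ that you did prove, this forces a.e.\ equality. So your proposal correctly isolates where the difficulty lies, but the route you sketch through that difficulty leads to a false statement, and the genuine proof must take the ergodic-theoretic detour.
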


\begin{proposition}\label{bow2}
Let $(X, \sigma_X), (Y,\sigma_Y)$ be topological mixing shifts of finite type
and $\pi: X\rightarrow Y$ be a factor map. Suppose $f\in Bow(X)$ and  
let $\mu_{f}$ be a unique invariant Gibbs measure  
for $f$. Then $\pi \mu_{f}$ is the unique equilibrium state for $P(\sigma_X, \pi, f)$ and 
$\mu_{f}$ is the unique relative equilibrium state for $f$ over $\pi\mu$.
\end{proposition}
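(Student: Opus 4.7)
The plan is to mirror the argument of Theorem \ref{general} in the mixing SFT setting, using Lemma \ref{impPS} in place of the full-shift identification from Lemma \ref{full}. Since $f\in Bow(X)$, the additive sequence $\F=\{\log f_n\}_{n=1}^{\infty}$ with $f_n(x)=e^{f(x)+\cdots+f(\sigma_X^{n-1}x)}$ is an almost additive potential on $X$ with bounded variation, and $\mu_f$ is simultaneously its unique invariant Gibbs measure and unique equilibrium state. Since a topologically mixing SFT has the specification property, Theorem \ref{main1} applies and furnishes the potential $\G=\{\log g_n\}_{n=1}^{\infty}$ on $Y$, together with the facts that $\pi\mu_f$ is the unique invariant Gibbs measure and unique equilibrium state for $\G$, and $P_Y(\G)=P_X(\F)=P_X(f)$.

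The central step is to establish, for every $m\in M(Y,\sigma_Y)$,
$$\int P(\sigma_X,\pi,f)\,dm=\lim_{n\to\infty}\frac{1}{n}\int \log g_n\,dm.$$
By Lemma \ref{impPS}, $m$-a.e.\ $y\in Y$ satisfies $P(\sigma_X,\pi,f)(y)=\limsup_{n\to\infty}(1/n)\log\sum_{x\in E_n(y)}f_n(x)$. Because $\F$ has bounded variation with constant $M$, any two such sums (over different choices of $E_n(y)$) differ by a factor at most $M$, so this limsup agrees with $\limsup_{n\to\infty}(1/n)\log g_n(y)$. By Lemma \ref{subd}, $\H=\{\log \tilde g_n e^{C}\}_{n=1}^{\infty}$ is subadditive on $Y$; Kingman's subadditive ergodic theorem applied to $m$ then yields the existence of the pointwise limit of $(1/n)\log g_n$ $m$-a.e.\ and its equality with $\lim_{n\to\infty}(1/n)\int\log g_n\,dm$, after absorbing the constants $P_X(\F)$ and $C/n$ that relate $g_n$, $\tilde g_n$, and $\tilde g_n e^C$. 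Substituting this identity into the variational principle for $\G$ provided by Theorem \ref{main1} gives
$$P_X(f)=\sup_{m\in M(Y,\sigma_Y)}\Bigl\{h_m(\sigma_Y)+\int P(\sigma_X,\pi,f)\,dm\Bigr\},$$
with $\pi\mu_f$ the unique maximizer, which is exactly the statement that $\pi\mu_f$ is the unique equilibrium state for $P(\sigma_X,\pi,f)$.

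For the second assertion, I would invoke the relative variational principle for almost additive potentials (Theorem \ref{vpaa}) with $m=\pi\mu_f$. Combining the two equilibrium identities $h_{\mu_f}(\sigma_X)+\int f\,d\mu_f=P_X(f)$ and $h_{\pi\mu_f}(\sigma_Y)+\int P(\sigma_X,\pi,f)\,d\pi\mu_f=P_X(f)$ by subtraction produces the relative equilibrium equality
$$h_{\mu_f}(\sigma_X)-h_{\pi\mu_f}(\sigma_Y)+\int f\,d\mu_f=\int P(\sigma_X,\pi,f)\,d\pi\mu_f,$$
showing $\mu_f$ is a relative equilibrium state for $f$ over $\pi\mu_f$. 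Uniqueness is immediate: any other relative equilibrium state $\mu'$ over $\pi\mu_f$ would satisfy $h_{\mu'}(\sigma_X)+\int f\,d\mu'=P_X(f)$, hence be an equilibrium state for $f$, which forces $\mu'=\mu_f$ by uniqueness of the Bowen-class equilibrium state.

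The principal obstacle is the pointwise-to-integral passage in the middle paragraph: Lemma \ref{impPS} only asserts an almost-everywhere equality of $P(\sigma_X,\pi,f)$ with a $\limsup$, so to conclude the integrated identity one must convert $\limsup$ to a genuine limit and then commute limit with integration. Both steps hinge on the subadditivity of $\H$ furnished by Lemma \ref{subd}; the bounded variation of $\F$, coming from $f\in Bow(X)$, is what allows the switch between the $\sup$-definition of $g_n$ and the arbitrary-choice sums appearing in Lemma \ref{impPS}.
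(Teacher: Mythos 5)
Your proposal is correct and follows essentially the same route as the paper: reduce to the additive potential $\F=\{\log f_n\}_{n=1}^{\infty}$, use Lemma \ref{impPS} together with bounded variation to identify $P(\sigma_X,\pi,f)$ almost everywhere with the limit of $\frac{1}{n}\log \bar g_n$ (the subadditivity from Lemma \ref{subd} justifying the passage from $\limsup$ to limit and the exchange with the integral), and then repeat the argument of Theorem \ref{general} with Theorem \ref{main1} and the relative variational principle. The paper additionally cites Theorem 4.6 of \cite{W2} to identify $P(\sigma_X,\pi,f)$ with $P(\sigma_X,\pi,\F)$ everywhere, but your direct use of the almost-everywhere statement from Lemma \ref{impPS} suffices for the integrated identities, so the two arguments are substantively the same.
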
 
\begin{proof}
Let $f_n(x)=e^{f(x)+\dots +f(\sigma^n_Xx)}$. Since $f\in Bow(X)$,  
%agh:  this sentence is strange.  I changed the comma before "we have"to the word and, but I don't know if the sentence is correct.
$\F= \{\log f_n \}_{n=1}^{\infty}$ is an additive sequence with bounded variation, we have 
$P(\sigma_X, \pi, \F)(y)=\limsup_{n\rightarrow\infty}\frac{1}{n}\log 
\big (\sum_{x\in D_n(y)} f_n(x)\big)$ (see (\ref{nicef})). 
Applying Theorem 4.6 of \cite{W2}, we obtain 
$P(\sigma_X, \pi, f)(y)= P(\sigma_X, \pi, \F)(y)$ for all $y\in Y$.
 Let $\bar g_n(y)$ be defined as in Corollary~\ref{maincoro}.
Using Lemma~\ref{impPS} and 
the fact that $\F$ has bounded variation, 
we obtain $P(\sigma_X,\pi, f)(y)=\lim_{n\rightarrow \infty}\frac{1}{n}\log \bar g_n(y)$ 
with respect to every invariant measure on $Y$.    
Therefore,  
we can make similar arguments to those in the proof of Theorem \ref{general}, replacing 
$\lim_{n\rightarrow\infty}\frac{1}{n}\int \log f_n d\bar \mu, \bar \mu\in M(X, \sigma_X)$ and $P(\sigma_X, \pi, \F)$ by 
$\int f d\bar \mu$ and $P(\sigma_X, \pi, f)$ respectively. This proves the proposition. 
\end{proof}

\section{Preimages of Gibbs measures}\label{sectionpreimage}
In the previous sections, we studied the image of a unique invariant Gibbs measure under a factor map. 
In this section, we will consider a preimage of the Gibbs measure for almost additive potential. 
%For the preimages of an invariant ergodic measure, problems on those  which have maximal entropy among 
%all measures in the fibre have been studied by Petersen, Quas and Shin \cite{PQS} and Allahbakhshi and Quas \cite{AQ} 
%Our question is the following. 
Let $(X,\sigma_X), (Y, \sigma_Y)$ be sofic shifts and 
$\pi:X\rightarrow Y$ be a factor map. Suppose that $X$ has the specification property. For an almost additive potential 
$\Phi_2=\{\log f_n\}_{n=1}^{\infty}$ on $Y$ with 
bounded variation, let $\nu_{\Phi_2}\in M(Y, \sigma_Y)$ be 
the unique invariant Gibbs measure associated to it. 
Now we want to ask the following question.
Is there any Gibbs measure $\mu_{\Phi_1}\in M(X, \sigma_X)$ associated to a sequence of continuous functions $\Phi_1$ on $X$   
such that $\pi\mu_{\Phi_1}=\nu_{\Phi_2}$? 
We will apply Theorems~\ref{main1} and ~\ref{thm1} to study this problem.
  
Answering this question will lead us to examine further when the image of Gibbs measure is a Gibbs measure. In Proposition \ref{small}, 
we study the condition under which the image of 
the Gibbs measure for $f\in Bow(X)$ is the Gibbs measure for a function that belongs to the Bowen class.   

Throughout this section, we use the potential    
$\widetilde\Phi=\{\log \tilde\phi_n \}_{n=1}^{\infty}$ on $Y$,  
where $\tilde\phi_n(y) =\vert \pi^{-1}[y_1\dots y_n]\vert$ for $y=(y_1, \dots,  y_n, \dots)\in Y$.
$\widetilde\Phi$ is a subadditive potential in general. It is an almost additive potential with the following condition. \\

\noindent \textbf{Condition A}\\
Let $n, m\in \N$. There exists $0<D\leq 1$ such that for any $ y_1\dots y_{n+m}\in B_{n+m}(Y)$, we have  
$D\vert \pi^{-1}[y_1\dots y_n]\vert \vert \pi^{-1}[y_{n+1}\dots y_{n+m}]\vert\leq 
\vert \pi^{-1}[y_{1}\dots y_{n+m}]\vert$.
\begin{remark}
 There is an example of a factor map $\pi:X\rightarrow Y$ between subshifts where $(X, \sigma_X)$ is a topologically mixing subshift of 
finite type without satisfying Condition A (see Example~5.6 in~\cite{Y2}). 
\end{remark}

\begin{theorem}\label{preimage}
Let $(X,\sigma_X), (Y, \sigma_Y)$ be sofic shifts, and 
$\pi:X\rightarrow Y$ be a factor map. Suppose that $X$ has the specification property
and Condition A holds. For an almost additive potential $\Phi_2=\{\log f_n\}_{n=1}^{\infty}$ on $Y$ with bounded variation,   
let $\nu_{\Phi_2}\in M(Y, \sigma_Y)$ be a unique invariant Gibbs measure for $\Phi_2$. 
Then $\Phi_1=\{\log ((f_n \circ \pi)/(\tilde \phi_n\circ\pi))\}_{n=1}^{\infty}$ is an almost additive 
potential on $X$ with bounded variation and there exists a unique invariant Gibbs measure $\mu_{\Phi_1}$ for $\Phi_1$ satisfying
$\pi\mu_{\Phi_1}=\nu_{\Phi_2}$. Then   

\begin{equation*}
\sup_{\mu \in M(X, \sigma_X)}\{h_{\mu}(\sigma_X)+\lim_{n\rightarrow\infty}\frac{1}{n}
\int \log \frac{f_n\circ\pi}{\tilde\phi_n\circ \pi} d\mu\}= 
\sup_{m\in M(Y, \sigma_Y)}\{h_{m}(\sigma_Y)+\lim_{n\rightarrow\infty}\frac{1}{n}
\int \log f_n dm\}.
\end{equation*}
\end{theorem}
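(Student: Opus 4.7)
The plan is to proceed in four steps: first verify that $\Phi_1$ is almost additive with bounded variation, then invoke Theorem~\ref{gibbs0} to produce the candidate measure $\mu_{\Phi_1}$, next transfer its Gibbs property to $\pi\mu_{\Phi_1}$, and finally extract the variational equality.

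For Step~1, I would combine the almost additivity of $\Phi_2$ (with constant $C$) with the natural subadditivity of $\widetilde\Phi$, namely $\tilde\phi_{n+m}(y)\leq \tilde\phi_n(y)\tilde\phi_m(\sigma^n_Yy)$, and Condition~A, which provides the matching lower bound $D\tilde\phi_n(y)\tilde\phi_m(\sigma^n_Yy)\leq \tilde\phi_{n+m}(y)$. Taking the quotient $f_{n+m}/\tilde\phi_{n+m}$ yields
\begin{equation*}
e^{-C}\cdot\frac{f_n(y)}{\tilde\phi_n(y)}\cdot\frac{f_m(\sigma^n_Yy)}{\tilde\phi_m(\sigma^n_Yy)}\leq\frac{f_{n+m}(y)}{\tilde\phi_{n+m}(y)}\leq\frac{e^{C}}{D}\cdot\frac{f_n(y)}{\tilde\phi_n(y)}\cdot\frac{f_m(\sigma^n_Yy)}{\tilde\phi_m(\sigma^n_Yy)},
\end{equation*}
so $\Phi_1$ is almost additive on $X$ with constant $C-\log D$. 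Bounded variation then follows because $\pi$ is a one-block factor map, $\Phi_2$ has bounded variation on $Y$, and $\tilde\phi_n$ depends only on the first $n$ coordinates of~$y$. Step~2 is immediate: since $X$ has the specification property, Theorem~\ref{gibbs0} delivers the unique invariant Gibbs measure $\mu_{\Phi_1}$ for $\Phi_1$.

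For Step~3, which is the heart of the argument, I would project the Gibbs inequalities satisfied by $\mu_{\Phi_1}$ to $Y$. Because $\pi$ is one-block, for any cylinder $[x_1\dots x_n]\subseteq\pi^{-1}[y_1\dots y_n]$ we have $\tilde\phi_n(\pi x)=\tilde\phi_n(y)$ and $f_n(\pi x)/f_n(y)$ is controlled by the bounded variation constant of $\Phi_2$. Summing the Gibbs estimate for $\mu_{\Phi_1}$ over the $\tilde\phi_n(y)$ cylinders $[x_1\dots x_n]$ projecting onto $[y_1\dots y_n]$, the $\tilde\phi_n(y)$ factors cancel and we obtain a constant $\widetilde C>0$ with
\begin{equation*}
\frac{1}{\widetilde C}\leq\frac{\pi\mu_{\Phi_1}([y_1\dots y_n])}{e^{-nP_X(\Phi_1)}f_n(y)}\leq\widetilde C
\end{equation*}
for every $y\in[y_1\dots y_n]$ and every $n$. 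Summing this over all $y_1\dots y_n\in B_n(Y)$, taking logs, dividing by $n$, and letting $n\to\infty$, the variational/pressure formula in Theorem~\ref{BM} forces $P_X(\Phi_1)=P_Y(\Phi_2)$. Consequently $\pi\mu_{\Phi_1}$ is an invariant Gibbs measure for $\Phi_2$ on $Y$. Since $X$ has specification and $\pi$ is a factor map, $Y$ inherits specification, so Theorem~\ref{gibbs0} applies to $\Phi_2$ and the Gibbs measure is unique; hence $\pi\mu_{\Phi_1}=\nu_{\Phi_2}$.

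For Step~4, the claimed equality of suprema is, by the variational principle in Theorem~\ref{BM} applied separately to the almost additive potentials $\Phi_1$ on $X$ and $\Phi_2$ on $Y$, precisely the identity $P_X(\Phi_1)=P_Y(\Phi_2)$ established above. The main obstacle is Step~1 (equivalently, the role of Condition~A): without it, $\widetilde\Phi$ would only be subadditive, $\Phi_1$ only asymptotically subadditive, and Theorem~\ref{gibbs0} would not supply a Gibbs measure directly; Condition~A is what upgrades $\widetilde\Phi$ to almost additive, which then makes the entire strategy go through.
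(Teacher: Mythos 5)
Your proposal is correct, and Steps 1--2 coincide with the paper's argument (the same use of Condition A together with the trivial subadditivity of $\tilde\phi_n$ to get almost additivity with constant $C-\log D$, and the same observation that $\tilde\phi_n$ is locally constant to get bounded variation). Where you genuinely diverge is in Steps 3--4. The paper does not push the Gibbs inequalities through $\pi$ at all: it invokes the compensation-function machinery of Theorems~\ref{thm1} and~\ref{thm2} (extended to almost additive potentials via Corollary~\ref{impcoro}), i.e.\ it uses the identity $\lim_n\frac1n\int\log(\tilde\phi_n\circ\pi)\,d\mu=\int F\circ\pi\,d\mu$ with $F=P(\sigma_X,\pi,0)$ to recognize $\Phi_1$ as $-F\circ\pi+\Phi_2\circ\pi$ up to integrals, whence the variational equality and the equivalence ``$\mu$ is an equilibrium state for $\Phi_1$ iff $\pi\mu$ is an equilibrium state for $\Phi_2$ (and $\mu$ is a relative equilibrium state for $0$ over $\pi\mu$)'' deliver $\pi\mu_{\Phi_1}=\nu_{\Phi_2}$ through uniqueness of equilibrium states. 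Your route instead sums the Gibbs estimate for $\mu_{\Phi_1}$ over the $\tilde\phi_n(y)=\vert\pi^{-1}[y_1\dots y_n]\vert$ cylinders lying over $[y_1\dots y_n]$, cancels the $\tilde\phi_n(y)$ factors, normalizes to get $P_X(\Phi_1)=P_Y(\Phi_2)$, and identifies $\pi\mu_{\Phi_1}$ with $\nu_{\Phi_2}$ through uniqueness of the invariant \emph{Gibbs} measure; this is exactly the technique the paper uses in Theorem~\ref{imp1} for the forward direction, transplanted here. Your version is more elementary and self-contained (it needs only Definition~\ref{defgb}, Theorem~\ref{BM} and Theorem~\ref{gibbs0}) and yields as a bonus the explicit two-sided Gibbs bounds for $\pi\mu_{\Phi_1}$ relative to $\Phi_2$; the paper's version buys the additional structural information that $\mu_{\Phi_1}$ is the relative equilibrium state for $0$ over $\nu_{\Phi_2}$, which is what gets reused in Proposition~\ref{key2} and the discussion following it. Two small points of hygiene: your almost-additivity display should be written in the variable $x\in X$ with $y=\pi x$ (using $\sigma_Y^n(\pi x)=\pi(\sigma_X^n x)$), since $\Phi_1$ lives on $X$; and in Step 3 you should note that $\pi^{-1}([y_1\dots y_n])$ is the \emph{disjoint} union of the cylinders $[x_1\dots x_n]$ with $\pi(x_1\dots x_n)=y_1\dots y_n$ (which holds because $\pi$ is a one-block map), so that the sum really computes $\pi\mu_{\Phi_1}([y_1\dots y_n])$. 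Neither affects correctness.
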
 
\begin{proof} 
For $n\in \N$, let $h_n(x)=(f_n \circ \pi)(x)/(\tilde\phi_n \circ \pi)(x)$. We first show that there exists 
$A>0$ such that $e^{-A}h_n(x)h_m(\sigma^n_X x)\leq h_{n+m}(x)\leq e^{A}h_n(x)h_m(\sigma^{n}_X x).$
Let $x\in X$ and $\pi (x)=y$. Since $\Phi_2$ is almost additive,  there exists $C_2>0$ such that
$e^{-C_2}f_n(y)f_{m}(\sigma^n_Yy)\leq f_{n+m}(y)\leq e^{C_2}f_n(y)f_m(\sigma^{n}_Yy)$. Using Condition A,  
$$ \vert \pi^{-1}[y_{1}\dots y_{n+m}]\vert\leq\vert 
\pi^{-1}[y_1\dots y_n]\vert \vert \pi^{-1}[y_{n+1}\dots y_{n+m}]\vert$$
and the property of the factor map $\pi$, we obtain
\begin{align*}
&h_{n+m}(x)=\frac{f_{n+m}(\pi x)}{\tilde\phi_{n+m}(\pi x)} 
\leq \frac{f_{n}(\pi x)f_m(\sigma^{n}_Y(\pi x))e^{C_2}}{\tilde\phi_{n}(\pi x) \tilde\phi_m(\sigma^n_Y(\pi x))D}\\
&\leq \frac{f_{n}(\pi x)f_m(\pi(\sigma^{n}_X x))e^{C_2}}{\tilde\phi_{n}(\pi x) \tilde\phi_m(\pi(\sigma^n_X x))D}
=h_{n}(x)h_m(\sigma^{n}_X x)\frac{e^{C_2}}{D}.
\end{align*}
Using similar arguments, we have  $h_{n}(x)h_m(\sigma^{n}_Xx)e^{-C_2} \leq h_{n+m}(x)$.
Thus $\Phi_1$ is almost additive. Next we show that $\Phi_1$ has bounded variation. Since $\Phi_2$ has bounded variation, there exists $M_2>0$ such that
$\sup_{n\in \N}\{f_n(y)/f_n(y'): y_i=y'_i, 1\leq i\leq n\}\leq M_2$.
For $x=(x_1, \dots,  x_n, \dots), x'=(x'_{1}, \dots,  x'_{n}, \dots) \in X$, 
where $x_i=x'_i$ for $1\leq i\leq n$, and noting that $\tilde \phi_n$ depends on the first $n$ coordinates of $y\in Y$, we have
$$\frac{h_n(x)}{h_n(x')}=\frac{f_n(\pi x)\tilde\phi_n(\pi x')}{f_n(\pi x')\tilde\phi_n(\pi x)}\leq M_{2}.$$
Therefore, $\Phi_1$ is almost additive with bounded variation and so  
there is a unique invariant Gibbs measure $\mu_{\Phi_1}$ for $\Phi_1$ which is a unique equilibrium state for $\Phi_1$.
Applying Theorems~\ref{thm1} and \ref{thm2},
we obtain the equality in the theorem and  $\pi\mu_{\Phi_1}=\nu_{\Phi_2}$. 
\end{proof}

Now we want to study a preimage of a unique invariant Gibbs measure $\mu_f$ for $f\in Bow(Y)$. 
 Since $f\in Bow (Y)$, let $f_n(y)=e^{f(y)+\dots +f(\sigma^n_Y y)}$ and define $\Phi_2=\{\log f_n\}_{n=1}^{\infty}$.
Then $\Phi_2$ is almost additive with bounded variation. 
Applying Theorem~\ref{preimage}, we immediately obtain the following. 

\begin{coro}\label{gcoro}
Let $(X,\sigma_X), (Y, \sigma_Y)$ be sofic shifts and 
$\pi:X\rightarrow Y$ be a factor map.  Suppose that $X$ has the specification property and Condition A holds. For $f\in Bow(Y)$, 
let $\nu_{f} \in M(Y, \sigma_Y)$ be the unique invariant Gibbs measure for $f$. 
Then there exists $\mu\in M(X, \sigma_X), \pi\mu=\nu_{f}$ such that 
$\mu$ is the unique invariant Gibbs measure for 
$f\circ\pi-\{\log ({\tilde \phi_n}\circ\pi)\}_{n=1}^{\infty}$ on $X$.
\end{coro}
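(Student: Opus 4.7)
The plan is to derive the corollary as a direct specialization of Theorem~\ref{preimage}, with the only real content being to verify that the hypotheses of that theorem are satisfied when the potential on $Y$ comes from a single Bowen-class function.

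First, I would promote $f$ to an almost additive potential. Given $f\in Bow(Y)$, set $f_n(y)=e^{f(y)+f(\sigma_Y y)+\cdots+f(\sigma_Y^{n-1}y)}$ and $\Phi_2=\{\log f_n\}_{n=1}^{\infty}$. As recorded in Section~\ref{background}, this $\Phi_2$ is an additive sequence, and $f\in Bow(Y)$ means $\sup_n V_n(\sigma_n f)<\infty$, which is exactly the statement that $\Phi_2$ has bounded variation. Additivity is a degenerate case of almost additivity (take $C=0$), so $\Phi_2$ is an almost additive potential on $Y$ with bounded variation.

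Next I would identify the two notions of Gibbs measure. Since $f\in Bow(Y)$, the classical theory quoted in Section~\ref{background} gives a unique invariant Gibbs measure $\nu_f$ for $f$. The Gibbs inequalities $C^{-1}\le \nu_f([y_1\cdots y_n])/\exp(-nP(f)+S_nf(y))\le C$ are, after substitution of $f_n(y)=\exp(S_n f(y))$ and the equality $P_Y(\Phi_2)=P_Y(f)$, precisely the Gibbs inequalities~(\ref{gp}) for the sequence $\Phi_2$. Hence $\nu_f$ is the unique invariant Gibbs measure for $\Phi_2$ in the sense of Definition~\ref{defgb}; in the notation of Theorem~\ref{preimage}, $\nu_{\Phi_2}=\nu_f$.

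With hypotheses verified, I would apply Theorem~\ref{preimage} to $\Phi_2$. That theorem produces an almost additive potential $\Phi_1=\{\log((f_n\circ\pi)/(\tilde\phi_n\circ\pi))\}_{n=1}^{\infty}$ on $X$ with bounded variation, together with a unique invariant Gibbs measure $\mu$ for $\Phi_1$ such that $\pi\mu=\nu_{\Phi_2}=\nu_f$. Finally, using $\pi\circ\sigma_X=\sigma_Y\circ\pi$, one has $f_n\circ\pi=\exp(f\circ\pi+f\circ\pi\circ\sigma_X+\cdots+f\circ\pi\circ\sigma_X^{n-1})$, so the sequence $\Phi_1$ is exactly the expression written as $f\circ\pi-\{\log(\tilde\phi_n\circ\pi)\}_{n=1}^{\infty}$ in the corollary statement. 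This gives the claim.

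There is no real obstacle here beyond bookkeeping: the substantive work (use of Condition~A to get almost additivity of $\Phi_1$, bounded variation from that of $\Phi_2$ and local-constancy of $\tilde\phi_n$, and the existence and uniqueness of the Gibbs measure from Theorem~\ref{gibbs0}) has been packaged into Theorem~\ref{preimage}. The only point one must be careful about is the identification $\nu_f=\nu_{\Phi_2}$, which relies on the agreement between the additive and sequence versions of the Gibbs property and of topological pressure for $f\in Bow(Y)$.
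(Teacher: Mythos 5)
Your proposal is correct and follows the same route as the paper: the paper's proof likewise sets $f_n(y)=e^{f(y)+\cdots+f(\sigma_Y^{n-1}y)}$, observes that $\Phi_2=\{\log f_n\}_{n=1}^{\infty}$ is almost additive with bounded variation because $f\in Bow(Y)$, and applies Theorem~\ref{preimage} directly. Your additional care in identifying $\nu_f$ with $\nu_{\Phi_2}$ via the agreement of the two Gibbs properties is a point the paper leaves implicit, but it is the same argument.
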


%\begin{proof}
%Since $f\in Bow (Y)$, let $f_n(y)=e^{f(y)+\dots +f(\sigma^n_Y y)}$ and define $\Phi_2=\{\log f_n\}_{n=1}^{\infty}$.
%Then $\Phi_2$ is almost additive with bounded variation. 
%\end{proof}

Next we consider a special case of Corollary \ref{gcoro}. If $(X, \sigma_X)$ is a full shift, 
then Condition A is 
satisfied. In this case, we can always find a preimage measure that is a unique invariant Gibbs measure for a function 
that belongs to the Bowen class. 

\begin{lemma}\label{bow}
Let $(X, \sigma_X)$ and $(Y, \sigma_Y)$ be subshifts and  $\pi:X\rightarrow Y$ be a factor map. 
If $f\in Bow (Y)$, then $f\circ\pi\in Bow(X)$. 
\end{lemma}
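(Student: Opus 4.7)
The plan is to unwind the definition of the Bowen class and exploit the one-block property of $\pi$ that was fixed in Section~\ref{background}. Recall that $Bow(Y)=\{f\in C(Y):\sup_{n\in\N}V_n(\sigma_n f)<\infty\}$, where $\sigma_n f=\sum_{i=0}^{n-1}f\circ\sigma_Y^i$ and $V_n(g)=\sup\{g(y)-g(y'):y_i=y'_i,\ 1\leq i\leq n\}$. So the goal reduces to bounding $V_n(\sigma_n(f\circ\pi))$ uniformly in $n$.

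First I would observe the elementary identity $\sigma_n(f\circ\pi)=(\sigma_n f)\circ\pi$, which follows immediately from $\pi\circ\sigma_X=\sigma_Y\circ\pi$:
\begin{equation*}
\sigma_n(f\circ\pi)(x)=\sum_{i=0}^{n-1}f(\pi(\sigma_X^i x))=\sum_{i=0}^{n-1}f(\sigma_Y^i\pi(x))=(\sigma_n f)(\pi(x)).
\end{equation*}
Next, because $\pi$ is a one-block factor map (as standing assumption on page~\pageref{En}), if $x,x'\in X$ satisfy $x_i=x'_i$ for $1\leq i\leq n$, then $(\pi x)_i=(\pi x')_i$ for $1\leq i\leq n$. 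Combined with the identity above, this gives
\begin{equation*}
\sigma_n(f\circ\pi)(x)-\sigma_n(f\circ\pi)(x')=(\sigma_n f)(\pi x)-(\sigma_n f)(\pi x')\leq V_n(\sigma_n f),
\end{equation*}
so taking the supremum over such pairs yields $V_n(\sigma_n(f\circ\pi))\leq V_n(\sigma_n f)$ for every $n$.

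Taking the supremum over $n$ gives $\sup_n V_n(\sigma_n(f\circ\pi))\leq \sup_n V_n(\sigma_n f)<\infty$, since $f\in Bow(Y)$. Continuity of $f\circ\pi$ on $X$ is immediate from continuity of $f$ and $\pi$, so $f\circ\pi\in Bow(X)$. There is really no obstacle here; the only point that could be a pitfall is forgetting the one-block assumption, without which the implication $x_i=x'_i\Rightarrow (\pi x)_i=(\pi x')_i$ need not hold on the nose and one would have to shift indices by the memory of $\pi$, but this is already built into the global hypotheses of the paper.
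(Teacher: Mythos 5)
Your proof is correct and follows exactly the route the paper intends: the paper's own proof is the one-line remark that the claim is ``straightforward by using the definition of the Bowen class,'' and your computation $\sigma_n(f\circ\pi)=(\sigma_n f)\circ\pi$ together with the one-block property giving $V_n(\sigma_n(f\circ\pi))\leq V_n(\sigma_n f)$ is precisely the intended unwinding of that definition.
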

\begin{proof}
The proof  is straightforward  by using the definition of the Bowen class.
\end{proof}

\begin{coro}(Full shift case) \label{cover1}
Let $(X, \sigma_X), (Y, \sigma_Y)$ be full shifts and $\pi:X\rightarrow Y$ be a factor map. 
Let $\nu_{f} \in M(Y, \sigma_Y)$ be the unique invariant Gibbs  measure for 
$f\in Bow(Y)$. 
Then there exists  $\mu\in M(X, \sigma_X)$ such that $\pi\mu=\nu_{f}$ and  
$\mu$ is the unique invariant Gibbs measure for a function that belongs to the Bowen class.
\end{coro}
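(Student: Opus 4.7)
My plan is to reduce the statement to Corollary \ref{gcoro} and then observe that, in the full-shift setting, the almost additive potential appearing there is in fact the Birkhoff sum of a single function in the Bowen class.

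First I would verify that Corollary \ref{gcoro} applies. A full shift trivially has the specification property (any two allowable words concatenate without a gap). Moreover, since $\pi$ is a one-block factor map and both $X$ and $Y$ are full shifts, setting $k(b) = |\pi^{-1}\{b\}|$ for each symbol $b$ of $Y$ one obtains the exact multiplicativity
\begin{equation*}
|\pi^{-1}[y_1 \cdots y_{n+m}]| \;=\; |\pi^{-1}[y_1 \cdots y_n]| \cdot |\pi^{-1}[y_{n+1} \cdots y_{n+m}]| \;=\; \prod_{i=1}^{n+m} k(y_i),
\end{equation*}
since every concatenation is allowable in both shifts. Thus Condition A holds with $D = 1$, and Corollary \ref{gcoro} produces a measure $\mu \in M(X, \sigma_X)$ with $\pi\mu = \nu_f$ that is the unique invariant Gibbs measure for the almost additive potential whose $n$-th term is $\log f_n \circ \pi - \log \tilde{\phi}_n \circ \pi$, where $f_n(y) = e^{f(y) + \cdots + f(\sigma^{n-1}_Y y)}$.

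Next I would collapse this potential to a single Bowen-class function. Define $g(x) = \log k((\pi x)_1)$; since $\pi$ is one-block, $g$ depends only on $x_1$, so it is locally constant and satisfies $V_n(\sigma_n g) = 0$ for every $n$, whence $g \in Bow(X)$. The multiplicativity above gives
\begin{equation*}
\log \tilde{\phi}_n(\pi x) \;=\; \sum_{i=0}^{n-1} g(\sigma^i_X x),
\end{equation*}
and by construction $\log f_n(\pi x) = \sum_{i=0}^{n-1} (f \circ \pi)(\sigma^i_X x)$. Hence the potential coincides with the additive sequence $\{\sigma_n h\}_{n=1}^{\infty}$ for the single function $h := f \circ \pi - g$. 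By Lemma \ref{bow} we have $f \circ \pi \in Bow(X)$, and combined with $g \in Bow(X)$ this yields $h \in Bow(X)$. Therefore $\mu$ is the unique invariant Gibbs measure of $h$ in the classical sense (the Gibbs inequality for the sequence is literally the classical Gibbs inequality for $h$, using $P_X(\Phi_1) = P_X(h)$), and $\pi\mu = \nu_f$, which is the required conclusion.

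The only real obstacle is recognizing the multiplicative structure of $\tilde\phi_n$ on full shifts, which is precisely what forces the otherwise almost additive potential to reduce to a Birkhoff sum; beyond that, the argument is a direct invocation of Corollary \ref{gcoro} and Lemma \ref{bow}.
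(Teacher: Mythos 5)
Your proof is correct and follows essentially the same route as the paper: both arguments rest on the multiplicativity of $\vert \pi^{-1}[y_1\dots y_n]\vert$ over full shifts, the same locally constant function $g$ (the paper defines it on $Y$ and composes with $\pi$), Lemma \ref{bow}, and the preimage construction of Theorem \ref{preimage}/Corollary \ref{gcoro}. Your pointwise identification of $\log(\tilde\phi_n\circ\pi)$ as an exact Birkhoff sum is marginally more direct than the paper's passage through ergodic averages of $\log\tilde\phi_n$, but it is the same idea.
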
 
\begin{proof}   
Let $(Y, \sigma_Y)$ be the full shift of $k$ symbols, $1,\dots, k$.
Let $X$ be the full shift on $r_1+\dots +r_k$ symbols $\{a^1_1, \dots a^1_{r_1}, \dots, a^{k}_1, \dots, a^k_{r_k}\}$   and 
$\pi(a^1_i)=1$ for $1\leq i\leq r_1, \dots, \pi(a^k_{i})=k$, for $1\leq i\leq r_k$. 
Let $y=(y_1, \dots, y_n,\dots) \in Y$ and $N_{i}(y_1\dots y_n)$
be the number of times symbol $i$ appears in $y_1\dots y_n$. 
Then $\vert \pi^{-1}[y_1\dots y_n]\vert =r^{N_1(y_1\dots y_n)}_1\dots 
r^{N_k(y_1\dots y_n)}_k$.
Define $g:Y \rightarrow \R$ by $g(y)=\log r_i$ if $y\in [i]$ for $1\leq i\leq k$.
We claim that, for any $\mu\in M(Y, \sigma_Y)$,
$\lim_{n\rightarrow \infty} (1/n)\int \log \tilde \phi_n dm= \int g dm$.

Since $\sum_{j=0}^{n-1}\chi_{[i]}(\sigma^{j}_Yy)=N_{i}(y_1\dots y_n)$, by the ergodic theorem, if $m\in Erg (Y, \sigma_Y)$,
$$\limsup_{n\rightarrow \infty}\frac{1}{n}N_i(y_1\dots y_n)= \limsup_{n\rightarrow \infty}
\sum_{j=0}^{n-1}\chi_{[i]}(\sigma^{j}_Yy)=\int \chi_{[i]}(y)dm$$ for $m$-a.e. $y \in Y$. 
Noting that 
\begin{align*}
&\lim_{n \rightarrow\infty} \frac{1}{n}\int \log \tilde\phi_n dm=
\int \lim_{n\rightarrow \infty} \frac{1}{n}\log \tilde\phi_n dm=
\int \limsup_{n\rightarrow \infty}\frac{1}{n}
\log \tilde\phi_n dm\\
&=\sum_{i=1}^{k}(\log r_i)\int \limsup_{n\rightarrow \infty}\frac{1}{n}N_i(y_1\dots y_n)dm =\sum_{i=1}^{k}(\log r_i)\int\chi_{[i]}(y)dm ,
\end{align*}
we obtain the claim.
Now in Theorem \ref{preimage}, set $f_n(y)=e^{f(y)+\dots +f(\sigma^n_Y y)}$ and define $\Phi_2=\{\log f_n\}_{n=1}^{\infty}$. 
By $f(\sigma^i_Y(\pi(x)))= f(\pi(\sigma^{i}_Xx))$, we obtain 
$$\lim_{n\rightarrow \infty}\frac{1}{n}\int \log (f_n\circ\pi) d\mu=\int f\circ \pi d\mu.$$
Therefore, for any $\bar \mu\in M(X, \sigma_X)$,
$$\lim_{n \rightarrow\infty}\frac{1}{n}\int \log \frac{f_n\circ\pi}{\tilde \phi_n\circ\pi} d\bar \mu=
\lim_{n\rightarrow \infty} \frac{1}{n}\int \log (f_n\circ \pi) d\bar \mu-\lim_{n\rightarrow \infty} \frac{1}{n}\int 
\log (\tilde \phi_n\circ \pi) d\bar \mu=\int f\circ \pi d \bar\mu -\int g\circ\pi d\bar \mu.$$
By Lemma~\ref{bow}, $f\circ\pi-g\circ\pi\in Bow(X)$. By Theorem \ref{preimage}, $\mu$ is the unique invariant Gibbs measure for 
$f\circ\pi-g\circ\pi$.
\end{proof}
\begin{remark}
Walters \cite{W2} studied this problem under a slightly different setting. We note that a slight modification of Theorem 4.1 \cite{W2} implies 
Corollary \ref{cover1}, observing that $-g\circ\pi$ above in the proof is a (continuous) compensation function (see \cite{W2}).\\  
%(2) Kichens and Tuncel \cite{KT} studied the case when $\pi:X\rightarrow Y$ is a finte to one factor map between 
%irreducible sofic shifts and measure $\nu$ on $Y$ is a semigroup measure.  
\end{remark}

Suppose that $\mu\in M(X, \sigma_X)$ is a unique invariant Gibbs measure for an almost additive potential 
$\Phi=\{\log f_n\}_{n=1}^{\infty}$ on $X$ 
with bounded variation.
We know from Theorem~\ref{main1} in Section~\ref{sectionfactor}  
that $\nu=\pi\mu$ is the unique invariant Gibbs measure for the asymptotically subadditive potential 
$\G$ on $Y$ where $\G$ is defined as in Theorem~\ref{main1}.
For this $\nu$, under a certain condition, we can find a preimage measure which is a unique invariant Gibbs measure 
$\mu_1$ for an almost additive potential on $X$ (see Theorem \ref{preimage}).
Thus we have two measures $\mu$ and $\mu_1$ that are projected to $\nu$. In this case, what is the relation between $\mu$ and 
$\mu_1$?  In the rest of this section, we consider this question. 
We will apply Theorems~\ref{main1} and~\ref{thm1} to study this problem.

\begin{proposition} \label{key2}
Let $(X, \sigma_X)$ and $(Y, \sigma_Y)$ be sofic shifts and 
$\pi:X\rightarrow Y$ be a factor map.  Suppose that $X$ has the specification property and 
Condition A holds. Let $\F=\{\log f_n\}_{n=1}^{\infty}$ be 
an almost additive potential on $X$ with bounded variation defined in Theorem \ref{main1} . 
Define $g_n$ and $\G$ as in Theorem \ref{main1} and let   
$\Phi_1=\{\log ((g_n\circ\pi)/(\tilde\phi_n \circ\pi))\}_{n=1}^{\infty}$. Then 
\begin{align*}
&\sup_{\mu\in M(X, \sigma_X)}\{h_{\mu}(\sigma_X)+\lim_{n\rightarrow\infty}\frac{1}{n}\int 
\log f_n d\mu\}=
\sup_{m\in M(Y, \sigma_Y)}\{h_{m}(\sigma_Y)+\lim_{n\rightarrow \infty}\frac{1}{n}\int 
\log g_n dm\} \\
&= \sup _{\mu\in M(X, \sigma_X)}\{h_{\mu}(\sigma_X)-\lim_{n\rightarrow\infty}
\frac{1}{n}\int \log (\tilde\phi_n \circ \pi) d\mu+\lim_{n\rightarrow \infty}
\frac{1}{n}\int \log (g_n \circ \pi) 
d\mu\}. 
\end{align*}
Therefore, $P_X(\F)=P_X(\Phi_1)$. Let $\mu_{\F}, \nu,\mu_{\Phi_1}$ be the unique invariant Gibbs measure 
for $\F, \G$, and $\Phi_1$, respectively. Then $\pi\mu_{\F}=\pi\mu_{\Phi_1}=\nu$.
\end{proposition}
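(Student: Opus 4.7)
The strategy is to reduce the proposition to three ingredients: (a) the potential $\Phi_1$ is almost additive with bounded variation on $X$, so that Theorem~\ref{gibbs0} supplies a unique invariant Gibbs measure $\mu_{\Phi_1}$ that is simultaneously the unique equilibrium state for $\Phi_1$; (b) each of the three suprema is the topological pressure of a suitable potential; and (c) these pressures all coincide, giving both $P_X(\F)=P_X(\Phi_1)$ and the projection identity $\pi\mu_{\Phi_1}=\nu$ by uniqueness.

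For (a), bounded variation is immediate, since $g_n$ and $\tilde\phi_n$ depend only on the first $n$ coordinates of $y\in Y$, and $\pi$ is a one-block map, so $H_n(x):=g_n(\pi x)/\tilde\phi_n(\pi x)$ is constant on each cylinder $[x_1\dots x_n]$. For almost additivity I would combine the multiplicative estimate $g_{n+m}(y)\leq e^{C}g_n(y)g_m(\sigma_Y^{n}y)$, obtained exactly as in the proof of Lemma~\ref{subd}, with the companion lower bound derived from the almost additivity of $\F$, the bounded variation (\ref{bvforf}), and the combinatorial content of Condition~A. Condition~A and subadditivity of the cardinality give the two-sided control $D\tilde\phi_n(y)\tilde\phi_m(\sigma^{n}_Yy)\leq\tilde\phi_{n+m}(y)\leq\tilde\phi_n(y)\tilde\phi_m(\sigma^{n}_Yy)$ on the denominator. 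Dividing produces a constant $A>0$ with $e^{-A}H_n(x)H_m(\sigma^{n}_Xx)\leq H_{n+m}(x)\leq e^{A}H_n(x)H_m(\sigma^{n}_Xx)$, placing $\Phi_1$ within the scope of Theorem~\ref{gibbs0}.

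For (b) and (c), the first supremum equals $P_X(\F)$ by Theorem~\ref{BM}, the middle supremum equals $P_Y(\G)$ by its asymptotically subadditive analogue, and these two are equal by Theorem~\ref{main1}. The third supremum equals $P_X(\Phi_1)$ by Theorem~\ref{BM} applied to $\Phi_1$. To close the loop, apply Theorem~\ref{thm1} with the subadditive potential $\Phi_s=\{\log(g_ne^{C})\}_{n=1}^{\infty}$ on $Y$ (whose subadditivity is established in Lemma~\ref{subd}); the factor $e^{C}$ contributes $C/n\to 0$ to both integrals and may be absorbed. The conclusion of Theorem~\ref{thm1} equates the middle supremum with $\sup_{\mu}\{h_{\mu}(\sigma_X)-\int F\circ\pi\,d\mu+\lim_{n\to\infty}(1/n)\int\log(g_n\circ\pi)\,d\mu\}$, where $F(y)=P(\sigma_X,\pi,0)(y)$. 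Because $X,Y$ are sofic, Theorem~\ref{thm2} lets me replace $\int F\circ\pi\,d\mu$ by $\lim_{n\to\infty}(1/n)\int\log(\tilde\phi_n\circ\pi)\,d\mu$, which produces precisely the third supremum. Hence all three suprema agree and $P_X(\F)=P_X(\Phi_1)$. Moreover, Theorem~\ref{thm1} asserts that $\mu$ is an equilibrium state for $-F\circ\pi+\Phi_s\circ\pi$ (which, after the Theorem~\ref{thm2} substitution, is $\Phi_1$) if and only if $\pi\mu$ is an equilibrium state for $\G$. Uniqueness of equilibrium states on both sides (Theorem~\ref{gibbs0} for $\Phi_1$, Theorem~\ref{main1} for $\G$) forces $\pi\mu_{\Phi_1}=\nu=\pi\mu_{\F}$, completing the proof.

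The main obstacle is the lower bound in the almost additivity of $g_n$: while the upper bound reproduces Lemma~\ref{subd} verbatim, the reverse inequality $g_{n+m}(y)\geq e^{-A}g_n(y)g_m(\sigma^{n}_Yy)$ requires passing from the pure cardinality estimate of Condition~A to a weighted estimate for sums of the form $\sum f_n(x)$ over representatives. The key is that bounded variation of $\F$ makes $f_n$ essentially constant on each $n$-cylinder of $X$ (up to the factor $M$ in (\ref{bvforf})), so $g_n(y)$ is comparable to a sum over projecting cylinders of a representative value; almost additivity of $\F$ then recombines pairs of such cylinders into $(n+m)$-cylinders, and Condition~A ensures a definite fraction of pairs are compatible in $X$. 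Once this technical step is in place, the remainder of the argument is essentially a bookkeeping exercise in the variational principles already stated in the paper.
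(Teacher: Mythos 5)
Your route to the two displayed equalities and to $\pi\mu_{\F}=\pi\mu_{\Phi_1}=\nu$ is essentially the paper's: the first equality is Theorem \ref{main1}; the second comes from Theorem \ref{thm1} applied to the subadditive potential on $Y$ supplied by Lemma \ref{subd}, combined with the substitution of Theorem \ref{thm2}; and the projection identity follows from the equilibrium-state equivalence in Theorem \ref{thm1} together with uniqueness on both sides. (The paper runs this with the normalized $\tilde g_n=g_ne^{-nP_X(\F)}$ and then strips off the constant $P_X(\F)$; your unnormalized version is the same computation.)

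The genuine gap is in your step (a), namely the claim that $\Phi_1$ is almost additive, and specifically the lower bound $g_{n+m}(y)\geq e^{-A}g_n(y)g_m(\sigma_Y^{n}y)$. Write $a_u=\sup\{f_n(x):x\in[u]\}$ for the $n$-blocks $u$ of $X$ projecting to $y_1\dots y_n$ and $b_v$ likewise for the $m$-blocks over $y_{n+1}\dots y_{n+m}$. Up to the constants $e^{\pm C}$ and $M^{\pm 2}$, the desired inequality is equivalent to $\sum_{uv\ \mathrm{allowable}}a_ub_v\geq \mathrm{const}\cdot\bigl(\sum_u a_u\bigr)\bigl(\sum_v b_v\bigr)$, i.e.\ that the allowable concatenations carry a definite fraction of the total \emph{weight} of the product set of pairs. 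Condition A only guarantees that they carry a definite fraction $D$ of its \emph{cardinality}. Bounded variation (\ref{bvforf}) controls the ratio of $f_n$ within a single $X$-cylinder, not across distinct cylinders over the same $Y$-word, so the weight $a_ub_v$ can concentrate on pairs whose concatenation is forbidden, and your "definite fraction of pairs are compatible" argument does not close the estimate; the specification property yields only the gapped inequality (\ref{keypro}), with a spacer of length $k$, which is not almost additivity. Fortunately this step is not needed for the equalities: Theorem \ref{thm1} requires only subadditivity of the potential on $Y$, which Lemma \ref{subd} provides, and the paper's proof of Proposition \ref{key2} never verifies almost additivity of $\Phi_1$ --- it obtains $\pi\mu_{\Phi_1}=\nu$ by rerunning the argument of Theorem \ref{thm1}, with the existence and uniqueness of $\mu_{\Phi_1}$ taken as given from the earlier discussion rather than re-derived. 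If you want a self-contained justification of that existence statement, it has to come from something other than the Condition-A computation you sketch.
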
   
\begin{proof}   
We obtain the first equality from Theorem~\ref{main1}.
For the second equality, using the fact that $\widetilde \H=\{\log \tilde g_ne^C\}_{n=1}^{\infty}$, where 
$\tilde g_n(y)=g_n(y)e^{-nP_X(\F)}$, 
is a subadditive potential on $Y$ and applying Theorem~\ref{thm1}, we obtain
$$\sup\{h_{\mu}(\sigma_X)+\lim_{n\rightarrow \infty}\frac{1}{n}
\int \log \frac{\tilde g_n\circ \pi}{\tilde\phi_n\circ \pi} d\mu\}  
=\sup_{m\in M(Y, \sigma_Y)}\{h_m(\sigma_Y)+\lim_{n\rightarrow \infty}\frac{1}{n}\int \log \tilde g_n dm\}.
$$
We have $\lim_{n\rightarrow \infty}(1/n)
\int \log ({\tilde g_n\circ \pi}/{\tilde\phi_n\circ \pi}) d\mu=\lim_{n\rightarrow \infty}({1}/{n})
\int \log ({g_n\circ \pi}/{\tilde\phi_n\circ \pi}) d\mu-P_X(\F)$ for all $\mu\in M(X, \sigma_X)$ and 
$\lim_{n\rightarrow \infty}({1}/{n})\int \log  \tilde g_n dm=\lim_{n\rightarrow \infty}({1}/{n})\int \log  g_n dm-P_X(\F)$ for all 
$m\in M(Y, \sigma_Y)$. Hence we obtain the second equality. The rest follows by the same proof used to show Theorem~\ref{thm1} 
(see \cite{Y2}).
\end{proof}  

Recall by Theorem \ref{general} that  
$\mu_{\F}$ in Proposition \ref{key2} is the relative equilibrium state of $\F$ over $\pi\mu=\nu$. 
%agh:  Which Theorem 4.3 (ii)?  Should there be a citation here?
Also, Proposition \ref{key2} implies that $\mu_{\Phi_1}$ is the relative equilibrium state of $0$ over $\nu$ 
(see Theorem~4.3).
Hence, in general, $\mu_{\F}\neq \mu_{\Phi_1}$. When do we have $\mu_{\F}=\mu_{\Phi_1}$? 
The next proposition gives an answer to this question.

\begin{proposition}\label{generalp}
In Proposition \ref{key2},
$\mu_{\F}=\mu_{\Phi_1}$ if and only if 
there exists $A>0$ such that for any $y\in [y_1, \dots, y_n], x\in [x_1, \dots x_n]$ where 
$\pi([x_1\dots x_n])\subseteq [y_1\dots y_n]$, 
\begin{equation}\label{condition2}
\frac{1}{A}\leq \frac{g_n(y)}{\vert \pi^{-1}[y_1 \dots y_n]\vert f_n(x)}\leq A. 
\end{equation}
\end{proposition}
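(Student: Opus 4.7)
The plan is to exploit the Gibbs estimates that both $\mu_\F$ and $\mu_{\Phi_1}$ enjoy as unique invariant Gibbs measures on $X$, and to observe that Proposition \ref{key2} gives us the coincidence $P_X(\F) = P_X(\Phi_1)$ so that the exponential prefactors in the two Gibbs properties match. Specifically, since $\F$ is almost additive with bounded variation and $X$ has the specification property, Theorem \ref{gibbs0} furnishes $C_1 > 0$ with
\begin{equation*}
\frac{1}{C_1} \leq \frac{\mu_\F([x_1 \dots x_n])}{e^{-nP_X(\F)} f_n(x)} \leq C_1
\end{equation*}
for every cylinder $[x_1 \dots x_n]$ and every $x$ in it; and since $\Phi_1$ is almost additive with bounded variation (as verified in the proof of Theorem \ref{preimage}), there is $C_2 > 0$ with
\begin{equation*}
\frac{1}{C_2} \leq \frac{\mu_{\Phi_1}([x_1 \dots x_n])}{e^{-nP_X(\F)} g_n(y)/|\pi^{-1}[y_1 \dots y_n]|} \leq C_2
\end{equation*}
whenever $\pi([x_1 \dots x_n]) \subseteq [y_1 \dots y_n]$ and $y$ lies in this cylinder.

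The forward implication is then essentially immediate: if $\mu_\F = \mu_{\Phi_1}$, then dividing the two displayed estimates for the same cylinder cancels the measure and the pressure, yielding
\begin{equation*}
\frac{1}{C_1 C_2} \leq \frac{g_n(y)}{|\pi^{-1}[y_1 \dots y_n]|\, f_n(x)} \leq C_1 C_2,
\end{equation*}
which is (\ref{condition2}) with $A = C_1 C_2$.

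For the converse, suppose (\ref{condition2}) holds. Multiplying the Gibbs estimates together with this bound produces $K > 0$ such that
\begin{equation*}
\frac{1}{K} \leq \frac{\mu_\F([x_1 \dots x_n])}{\mu_{\Phi_1}([x_1 \dots x_n])} \leq K
\end{equation*}
uniformly over all cylinders. Since cylinders form a semi-algebra generating the Borel $\sigma$-algebra of $X$, a standard monotone class / outer regularity argument upgrades this to the inequalities $\mu_\F \leq K \mu_{\Phi_1}$ and $\mu_{\Phi_1} \leq K \mu_\F$ on Borel sets, so that $\mu_\F$ and $\mu_{\Phi_1}$ are mutually absolutely continuous. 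By Theorem \ref{gibbs0}, both are mixing (hence ergodic) $\sigma_X$-invariant probability measures; the Radon--Nikodym derivative $d\mu_\F/d\mu_{\Phi_1}$ is therefore $\sigma_X$-invariant, equal a.e.\ to a constant, and since both are probability measures that constant must be $1$. Hence $\mu_\F = \mu_{\Phi_1}$.

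I expect no serious obstacle here: the only delicate step is concluding equality from bounded comparability of cylinder masses, but this reduces to the well-known fact that two mutually absolutely continuous ergodic invariant measures coincide. Everything else is bookkeeping with the Gibbs property and the identity $P_X(\F) = P_X(\Phi_1)$ already provided by Proposition \ref{key2}.
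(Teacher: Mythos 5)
Your proposal is correct and follows essentially the same route as the paper: both directions rest on dividing the two Gibbs estimates (using $P_X(\F)=P_X(\Phi_1)$ from Proposition \ref{key2}) to compare cylinder masses, and the converse concludes via the standard fact that two ergodic invariant measures are either mutually singular or equal. Your slightly more explicit phrasing of that last step (invariance of the Radon--Nikodym derivative) is just an unpacking of the same dichotomy the paper invokes.
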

\begin{proof} 
Suppose that $\mu_{\F}=\mu_{\Phi_1}$. For any $n, m\in \N$ there exist $C_1, C_2>0$ such that 
\begin{equation}\label{up1}
 \frac{1}{C_1}\leq \frac{\mu_{\F}([x_1\dots x_n])}{e^{-nP_X(\F)}f_n(x)}\leq C_1 \text{ for all } x\in [x_1\dots x_n]
\end{equation}
and 
\begin{equation}\label{up2}
 \frac{1}{C_2}\leq \frac{\mu_{\Phi_1}([x_1\dots x_n]) \vert \pi^{-1}[y_1\dots y_n]\vert}
{e^{-nP_X(\Phi_1)}g_n(\pi(x))}\leq C_2 \text{ for all } x\in [x_1\dots x_n] 
\text{ such that } \pi([x_1\dots x_n])\subseteq [y_1 \dots y_n].  
\end{equation}
By (\ref{up1}), (\ref{up2}) and $P_{X}(\F)=P_{X}(\Phi_1)$, we obtain
\begin{equation}\label{up3}
\frac{\vert \pi^{-1}[y_1\dots y_n]\vert f_n(x)}{C_1C_2 g_n(\pi(x))}
\leq \frac{\mu_{\F}([x_1\dots x_n])}{\mu_{\Phi_1}([x_1\dots x_n])}\leq
 \frac{C_1C_2\vert \pi^{-1}[y_1\dots y_n]\vert f_n(x)}{g_n(\pi(x))} 
\end{equation}
for any $x\in [x_1\dots x_n]$ such that  $\pi([x_1\dots x_n])
\subseteq [y_1\dots y_n]$. Since 
$\mu_{\F}([x_1\dots x_n])=\mu_{\Phi_1}([x_1\dots x_n])$, (\ref{up3}) implies that 
\begin{equation*}
\frac{1}{C_1C_2}\leq \frac{g_n(\pi(x))}{f_n(x)\vert \pi^{-1}[y_1\dots y_n] \vert}\leq C_1C_2  
\end{equation*}
for any $x\in [x_1\dots x_n]$ such that $\pi([x_1\dots x_n])\subseteq [y_1\dots y_n], n\in\N$.   
Hence we obtain the result. 
Conversely, suppose we have (\ref{condition2}). 
Using (\ref{up3}), 
we obtain
\begin{equation}\label{singular}
\frac{1}{AC_1C_2}\leq \frac{\mu_{\F}([x_1\dots x_n])}{\mu_{\Phi_1}([x_1\dots x_n])}\leq AC_1C_2 
\text { for each  cylinder set } [x_1\dots x_n] \text{ in } X, n\in \N. 
\end{equation}
Since $\mu_{\F}$ and $\mu_{\Phi_1}$ are both ergodic, they are either mutually singular or equal. Using (\ref{singular}), 
they are mutually absolutely continuous. Therefore, $\mu_{\F}=\mu_{\Phi_1}$.
\end{proof}  

Next we consider the special case 
when $\mu_{\F}$ is the unique invariant Gibbs measure for 
$\F=\Phi\circ \pi=\{\log (\phi_n\circ\pi)\}_{n=1}^{\infty}$ on $X$, where $\Phi=\{\log \phi_n\}_{n=1}^{\infty}$ is an almost additive
potential on $Y$ with bounded variation. 

\begin{lemma}\label{aaf}
Let $(X, \sigma_X)$ and $(Y, \sigma_Y)$ be subshifts and let $\pi:X\rightarrow Y$ be a factor map. 
If $\Phi=\{\log \phi_n\}_{n=1}^{\infty}$ is an almost additive potential on 
$Y$ with bounded variation, then 
$\Phi\circ \pi$ is an almost additive 
potential on $X$ with bounded variation. 
\end{lemma}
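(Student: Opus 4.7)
The plan is to verify the two defining conditions of an almost additive potential with bounded variation directly by pulling back the corresponding inequalities for $\Phi$ along $\pi$. Two features of the setup do all the work: the intertwining relation $\sigma_Y\circ\pi=\pi\circ\sigma_X$ (which holds by definition of a factor map), and the standing assumption that $\pi$ is a one-block factor map (stated on page~\pageref{En}), which forces the first $n$ coordinates of $\pi(x)$ to depend only on the first $n$ coordinates of $x$.

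For almost additivity, I would let $C>0$ be a constant such that $e^{-C}\phi_n(y)\phi_m(\sigma_Y^n y)\le \phi_{n+m}(y)\le e^{C}\phi_n(y)\phi_m(\sigma_Y^n y)$ for all $y\in Y$ and $n,m\in\N$. Fix $x\in X$, set $y=\pi(x)$, and substitute into the inequality above. Using $\sigma_Y^n(\pi x)=\pi(\sigma_X^n x)$, both sides rewrite in terms of $\phi_n\circ\pi$ evaluated at $x$ and at $\sigma_X^n x$, producing
\begin{equation*}
e^{-C}(\phi_n\circ\pi)(x)(\phi_m\circ\pi)(\sigma_X^n x)\le (\phi_{n+m}\circ\pi)(x)\le e^{C}(\phi_n\circ\pi)(x)(\phi_m\circ\pi)(\sigma_X^n x),
\end{equation*}
so $\Phi\circ\pi$ is almost additive on $X$ with the same constant $C$.

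For bounded variation, I would let $M>0$ satisfy $\sup_{n\in\N}\{\phi_n(y)/\phi_n(y'): y_i=y'_i,\ 1\le i\le n\}\le M$. If $x,x'\in X$ agree in their first $n$ coordinates, the one-block property of $\pi$ gives $(\pi x)_i=(\pi x')_i$ for $1\le i\le n$, hence $\phi_n(\pi x)/\phi_n(\pi x')\le M$. Taking the supremum over such pairs $x,x'$ shows that $\Phi\circ\pi$ has bounded variation with the same constant $M$.

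There is no real obstacle here: the lemma is essentially a routine pullback computation, and I expect the only thing worth flagging explicitly is that the constants $C$ and $M$ transfer without any deterioration, which is why this lemma is useful (for instance, in combining it with Theorem~\ref{gibbs0} later in the paper).
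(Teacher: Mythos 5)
Your proof is correct and matches the paper's intent exactly: the paper simply states that the lemma ``is immediate by using the definitions of almost additivity and bounded variation,'' and your pullback computation via $\sigma_Y^n\circ\pi=\pi\circ\sigma_X^n$ and the one-block property is precisely the verification being left implicit. Your observation that the constants $C$ and $M$ carry over unchanged is a correct and worthwhile detail.
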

\begin{proof}
This is a generalization of Lemma \ref{bow}. The proof is immediate by using the definitions of almost additivity and bounded variation.  
\end{proof}

\begin{coro}\label{special}
In Proposition \ref{key2}, let $\F=\Phi\circ \pi=\{\log (f_n\circ\pi)\}_{n=1}^{\infty}$, 
where $\Phi=\{\log f_n\}_{n=1}^{\infty}$ is an almost additive
potential on $Y$ with bounded variation. Then $\mu_{\F}=\mu_{\Phi_1}$.
\end{coro}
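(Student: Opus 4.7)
The plan is to verify the sufficient condition (\ref{condition2}) of Proposition \ref{generalp} directly, and then conclude $\mu_{\F} = \mu_{\Phi_1}$. By Lemma \ref{aaf}, $\F = \Phi\circ\pi$ is an almost additive potential on $X$ with bounded variation, so Proposition \ref{key2} applies and the unique invariant Gibbs measures $\mu_{\F}$ and $\mu_{\Phi_1}$ both exist.

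The main computation is to estimate $g_n(y)$. Let $M_2 > 0$ be the bounded variation constant for $\Phi$, so that $f_n(y)/f_n(y') \leq M_2$ whenever $y_i = y'_i$ for $1 \leq i \leq n$. Fix $y \in [y_1 \dots y_n]$ and a set $E_n(y)$ consisting of one point from each cylinder $[x_1 \dots x_n]$ in $X$ with $\pi([x_1\dots x_n]) \subseteq [y_1\dots y_n]$. For each $x \in E_n(y)$ we have $\pi(x) \in [y_1\dots y_n]$, so bounded variation of $\Phi$ yields
\begin{equation*}
\frac{1}{M_2} f_n(y) \leq f_n(\pi(x)) = (f_n\circ\pi)(x) \leq M_2 f_n(y).
\end{equation*}
Summing over $E_n(y)$ and noting that $|E_n(y)| = |\pi^{-1}[y_1\dots y_n]|$, and then taking the supremum over all such $E_n(y)$, I obtain
\begin{equation*}
\frac{1}{M_2}\,|\pi^{-1}[y_1\dots y_n]|\,f_n(y) \leq g_n(y) \leq M_2\,|\pi^{-1}[y_1\dots y_n]|\,f_n(y).
\end{equation*}

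Next, for any $x \in [x_1\dots x_n]$ with $\pi([x_1\dots x_n]) \subseteq [y_1\dots y_n]$, the point $\pi(x)$ lies in $[y_1\dots y_n]$, so bounded variation of $\Phi$ again gives $(1/M_2) f_n(y) \leq (f_n\circ\pi)(x) \leq M_2 f_n(y)$. Combining this with the previous estimate yields
\begin{equation*}
\frac{1}{M_2^2} \leq \frac{g_n(y)}{|\pi^{-1}[y_1\dots y_n]|\,(f_n\circ\pi)(x)} \leq M_2^2
\end{equation*}
for every $n \in \N$, every $y \in [y_1\dots y_n]$, and every $x \in [x_1\dots x_n]$ with $\pi([x_1\dots x_n]) \subseteq [y_1\dots y_n]$. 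This is precisely condition (\ref{condition2}) with $A = M_2^2$, so Proposition \ref{generalp} gives $\mu_{\F} = \mu_{\Phi_1}$.

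There is no real obstacle beyond bookkeeping: the only ingredient is the bounded variation of $\Phi$ on $Y$, which transfers directly to an estimate on $g_n(y)$ because the summands $(f_n\circ\pi)(x)$ are constant (up to the factor $M_2^{\pm 1}$) on $\pi^{-1}[y_1\dots y_n]$. The setup of Proposition \ref{generalp} then does all the remaining work.
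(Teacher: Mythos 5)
Your proof is correct and follows essentially the same route as the paper: both verify condition (\ref{condition2}) of Proposition \ref{generalp} by using the bounded variation of $\Phi$ on $Y$ to show that each summand $(f_n\circ\pi)(x)$ in the definition of $g_n(y)$ is comparable to a single reference value, so that $g_n(y)$ is comparable to $\vert\pi^{-1}[y_1\dots y_n]\vert\,(f_n\circ\pi)(x)$. The only cosmetic difference is that the paper bounds $g_n(y)$ between $\vert\pi^{-1}[y_1\dots y_n]\vert$ times the infimum and supremum of $(f_n\circ\pi)$ over the fiber, obtaining the constant $M$, whereas you compare everything to $f_n(y)$ and end up with $M_2^2$; either constant works.
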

\begin{proof}
We use Proposition \ref{generalp}. Recall the definition of $g_n$ from Section \ref{sectionfactor}.
Fix $y_1\dots y_n\in B_n(Y)$. 
\begin{align*}
\vert \pi^{-1}[y_1\dots y_n]\vert \inf_{\pi(x)\in [y_1\dots y_n]}(f_n\circ \pi) (x)
\leq g_n(y) \leq \vert \pi^{-1}[y_1\dots y_n]\vert \sup_{\pi(x)\in [y_1\dots y_n]} (f_n\circ \pi)(x).
\end{align*} 
Let $y\in [y_1\dots y_n]$. Then for any $x\in [x_1\dots x_n]$ such that $\pi([x_1\dots x_n])\subseteq [y_1\dots y_n]$,
\begin{equation*}
 \frac{\inf_{\pi(x)\in [y_1\dots y_n]}(f_n\circ \pi)(x)}{(f_n\circ\pi)(x)}\leq \frac{g_n(y)}
{\vert \pi^{-1}[y_1\dots y_n]\vert (f_n\circ\pi)(x)}\leq
\frac{\sup _{\pi(x)\in [y_1\dots y_n]}(f_n\circ \pi)(x)}{(f_n\circ\pi)(x)}.
\end{equation*}
Since $\Phi$ has bounded variation, there exists $M>0$ such that $f_n(y')/f_n(y)\leq M$ for 
$y, y'\in Y, y_i=y'_i$ for $1\leq i\leq n$. 
%Hence
%$\sup_{y\in [y_1\dots y_n]}\phi_n(y)\leq M \phi_n(y)$ for all $n\in \N$, 
Therefore, for any $x\in [x_1\dots x_n]$ such that $\pi([x_1\dots x_n])\subseteq [y_1\dots y_n]$,
\begin{equation*}
\frac{1}{M}\leq \frac{g_n(y)}{\vert \pi^{-1}[y_1\dots y_n]\vert (f_n\circ\pi)(x)}\leq
M.
\end{equation*}
\end{proof}

In Corollary \ref{cover1}, we considered a preimage of a unique invariant 
Gibbs measure for $f\in Bow(Y)$ and 
showed that we can find a preimage
which is a unique invariant Gibbs measure for a function that belongs to the Bowen class. 
To see this, we studied conditions under which
$\Phi_1$ in Proposition \ref{key2} can be replaced by a continuous function that belongs to the Bowen class, and 
we chose the condition
of $(X, \sigma_X)$ being a full shift.  Now given a unique invariant 
Gibbs measure $\nu$ for $f\in Bow(Y)$, we 
consider another preimage $\mu_{\F}$ by using 
a unique invariant Gibbs measure $\mu_{\F}$ for $\F$ in Proposition \ref{key2}. We will show that we can replace $\F$
by a function that belongs to the Bowen class when $(X, \sigma_X)$ is a sofic shift with the specification property.  

\begin{coro}\label{co2}
Let $(X, \sigma_X)$, $(Y, \sigma_Y)$ be sofic shifts and 
$\pi:X\rightarrow Y$ be a factor map.    Suppose that $X$ has the specification property and Condition A holds.
Let $f \in Bow (Y)$ and $\nu\in M(Y, \sigma_Y)$ be 
the unique invariant Gibbs measure for $f$. 
Let $\mu_{f\circ\pi}$ be 
the unique Gibbs measure for $f\circ \pi\in Bow(X)$.
Define $\Phi_1=\{\log ((f_n\circ\pi)/(\tilde \phi_n\circ\pi))\}_{n=1}^{\infty}$, where 
$f_n(y)=e^{f(y)+\dots +f(\sigma^n_Yy)}$, and $\mu_{\Phi_1}$ as in Theorem \ref{preimage}. 
Then $\mu_{f\circ\pi}=\mu_{\Phi_1}$.
\end{coro}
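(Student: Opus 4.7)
The plan is to reduce Corollary \ref{co2} to a direct application of Corollary \ref{special}. First I would set up the relevant almost additive potential on $Y$: since $f \in Bow(Y)$, the additive sequence $\Phi = \{\log f_n\}_{n=1}^{\infty}$ with $f_n(y) = e^{f(y)+\dots+f(\sigma_Y^{n-1}y)}$ is in particular almost additive on $Y$ with bounded variation (as noted in Section \ref{background}, $f \in Bow(Y)$ immediately gives bounded variation of the associated additive potential). Then, invoking Lemma \ref{aaf}, the pulled-back sequence $\F := \Phi \circ \pi = \{\log(f_n \circ \pi)\}_{n=1}^{\infty}$ is almost additive on $X$ with bounded variation, so the hypotheses of Proposition \ref{key2} and Corollary \ref{special} are satisfied.

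Next I would identify the unique invariant Gibbs measure $\mu_\F$ for $\F$ with $\mu_{f\circ\pi}$. The key observation is the pointwise identity
\[
(f_n \circ \pi)(x) = e^{f(\pi x) + f(\sigma_Y \pi x) + \dots + f(\sigma_Y^{n-1}\pi x)} = e^{(f\circ\pi)(x) + (f\circ\pi)(\sigma_X x) + \dots + (f\circ\pi)(\sigma_X^{n-1}x)},
\]
which follows from $\pi \circ \sigma_X = \sigma_Y \circ \pi$. Thus the Gibbs bound
\[
\frac{1}{C_0} \le \frac{\mu([x_1\dots x_n])}{e^{-nP_X(\F)}(f_n\circ\pi)(x)} \le C_0
\]
defining the Gibbs property for $\F$ is exactly the Gibbs property for the continuous potential $f\circ\pi$. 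By Lemma \ref{bow}, $f\circ\pi \in Bow(X)$, so its unique invariant Gibbs measure $\mu_{f\circ\pi}$ exists, and by uniqueness (Theorem \ref{gibbs0}) $\mu_\F = \mu_{f\circ\pi}$.

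Finally, I would apply Corollary \ref{special} to this $\F$. Since $\F = \Phi\circ\pi$ with $\Phi$ almost additive and bounded variation on $Y$, the corollary gives $\mu_\F = \mu_{\Phi_1}$. Combining with the previous step yields $\mu_{f\circ\pi} = \mu_{\Phi_1}$, which is the desired conclusion. There is no real obstacle here; the only point requiring care is matching notations and verifying that the chosen $\F$ indeed coincides with the one used to define the unique Gibbs measure in Corollary \ref{special}, but this follows formally from the almost additivity, bounded variation, and the Gibbs characterization already established.
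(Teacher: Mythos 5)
Your proposal is correct and follows essentially the same route as the paper: the paper's proof likewise sets $\Phi=\{\log f_n\}_{n=1}^\infty$, forms $\F=\Phi\circ\pi$, applies Corollary \ref{special} to get $\mu_{\F}=\mu_{\Phi_1}$, and identifies $\mu_{\F}=\mu_{f\circ\pi}$. You merely spell out the justifications (Lemma \ref{aaf}, the Birkhoff-sum identity, and uniqueness of the Gibbs measure) that the paper leaves implicit.
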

\begin{proof}
Let $\Phi=\{\log f_n\}_{n=1}^{\infty}$, where $f_n(y)=e^{f(y)+\dots +f(\sigma^n_Y y)}$
and $\F=\Phi\circ\pi$. Applying Corollary~\ref{special}, $\mu_{\F}=\mu_{\Phi_1}$. Since 
$\mu_{\F}=\mu_{f\circ \pi}$, we obtain the result.  
\end{proof}
%\begin{remark}
%If $(X, \sigma_X)$ is a full shift, then we also have $\Phi_1\in Bow (Y)$ as shown in Corollary \ref{cover1}. 
%\end{remark}

Let $f\in C(Y)$.   
In Corollary \ref{co2}, a unique invariant Gibbs measure $\mu_{f\circ\pi}$ for $f\circ \pi$, where $f\in Bow(Y)$,  is 
projected to a unique invariant Gibbs measure $\nu$ for $f$. 
In the next proposition we consider a slightly more general condition of $\phi\in Bow(X)$ 
under which a unique invariant Gibbs measure $\mu_{\phi}$ for $\phi$ is projected to a unique invariant Gibbs measure
$\nu$ for a function that belongs to the Bowen class. The question of images of Gibbs measures for functions of summable variation has recently 
been  
studied for full shifts \cite{CU, PK} and for subshifts \cite{K, JY}. 

The next theorem implies, for example, 
that if $f$ is in the Bowen class and has bounded variation in the preimages of all cylinders  
$[y_1\dots y_n]$ in $Y$, i.e., there exists $M>0$ for any $y_1 \dots y_n \in B_n(Y)$ such that 
$\sup_{n\in\N}\{e^{f(x)+\dots +f(\sigma^n_Xx)}/e^{f(x')+ \dots+ f(\sigma^n_X x')}
: \pi(x_1\dots x_n)=\pi(x'_1\dots x'_n)=y_1 \dots y_n\}\leq M$, then the projection $\nu$ is a unique invariant Gibbs measure for a function
that belongs to the Bowen class. 

\begin{proposition}\label{small}
Let $(X, \sigma_X)$ and $(Y, \sigma_Y)$ be full shifts and $\pi:X \rightarrow Y$  a factor map.
Let $f \in Bow(X)$ and $\mu_{f}$ be a unique invariant Gibbs measure 
for $f$. Let $f_n(x)=e^{f(x)+\dots+f(\sigma^{n-1}_Xx)}$. For $n\in \N$, define $\bar g_n$ and 
$\bar \G=\{\log \bar g_n\}_{n=1}^{\infty}$ on $Y$ 
as in Corollary~\ref{maincoro}. 
Suppose that there exists $A>0$ such that  
$$\frac{1}{A}\leq \frac{\bar g_n(y)}
{\vert \pi^{-1}[y_1\dots y_n]\vert f_n (x)}\leq A,$$
for each $x\in [x_1\dots x_n]$ such that $\pi([x_1\dots x_n])\subseteq [y_1\dots y_n]$.
Then $\nu=\pi\mu_{f}$ is a unique invariant Gibbs measure 
for a continuous function on $Y$ which belongs to the Bowen class.
\end{proposition}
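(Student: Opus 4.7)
The plan is to exhibit an explicit continuous function $g\in Bow(Y)$ whose Bowen sums $\sigma_n g$ agree with $\log \bar g_n$ up to a uniform additive constant, and then deduce from the Gibbs property of $\nu$ for $\bar\G$ (given by Corollary~\ref{maincoro}) that $\nu$ is the unique invariant Gibbs measure for $g$.

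First, I would exploit the full-shift structure. Write $Y=\{1,\dots,k\}^{\N}$; since $\pi$ is a one-block factor map between full shifts, I may relabel $X$ as in the proof of Corollary~\ref{cover1}, so that $\pi^{-1}(j)$ consists of $r_j$ symbols and $|\pi^{-1}[y_1\dots y_n]|=\prod_{j=1}^{k} r_j^{N_j(y_1\dots y_n)}$. Choose, for each $j\in\{1,\dots,k\}$, a preferred preimage symbol $a^j_{*}\in\pi^{-1}(j)$, and define the section $s:Y\to X$ by $s(y)=a^{y_1}_{*}a^{y_2}_{*}\cdots$. This $s$ is continuous (indeed locally constant on $1$-cylinders) and satisfies $s\circ\sigma_Y=\sigma_X\circ s$, so that $\sigma_n f(s(y))=\sum_{i=0}^{n-1}f(s(\sigma_Y^i y))$.

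Next, define
\[
g(y)=f(s(y))+\log r_{y_1}.
\]
Then $g\in C(Y)$, and
\[
\sigma_n g(y)=\sigma_n f(s(y))+\sum_{i=0}^{n-1}\log r_{y_{i+1}}=\sigma_n f(s(y))+\log|\pi^{-1}[y_1\dots y_n]|.
\]
Applying the standing hypothesis to $x=s(y)$ (which lies in some $n$-cylinder $[x_1\dots x_n]$ with $\pi([x_1\dots x_n])\subseteq[y_1\dots y_n]$) gives
\[
\bigl|\log\bar g_n(y)-\sigma_n g(y)\bigr|\le \log A
\]
for every $y\in Y$ and every $n\in\N$. Since $\bar g_n(y)$ depends only on $y_1\dots y_n$, it is constant on each $n$-cylinder, so for $y,y'$ sharing the first $n$ coordinates one obtains $|\sigma_n g(y)-\sigma_n g(y')|\le 2\log A$; thus $\sup_{n\in\N} V_n(\sigma_n g)\le 2\log A$, i.e.\ $g\in Bow(Y)$.

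Finally, I would transfer the Gibbs property. By Corollary~\ref{maincoro}, $\nu=\pi\mu_f$ is the unique invariant Gibbs measure for $\bar\G$, so there is $C_0>0$ with
\[
\tfrac{1}{C_0}\le \frac{\nu([y_1\dots y_n])}{e^{-nP_Y(\bar\G)}\bar g_n(y)}\le C_0.
\]
Combining with $e^{\sigma_n g(y)}\asymp \bar g_n(y)$ (up to the multiplicative constant $A$) yields a uniform two-sided bound for $\nu([y_1\dots y_n])/e^{-nP_Y(\bar\G)+\sigma_n g(y)}$. Comparing the topological pressures through their locally constant expressions shows $P_Y(g)=P_Y(\bar\G)$, so $\nu$ is an invariant Gibbs measure for $g$; uniqueness, together with the fact that $\nu$ is the unique equilibrium state for $g$, follows from the classical theory of Bowen-class potentials recalled in Section~\ref{background}. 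The one step that requires genuine care rather than routine verification is the construction of the section $s$ together with the identity $\sigma_n g(y)=\sigma_n f(s(y))+\log|\pi^{-1}[y_1\dots y_n]|$, because everything downstream depends on being able to align the $Y$-side Birkhoff sum with the $X$-side Birkhoff sum through a shift-commuting continuous choice of preimage.
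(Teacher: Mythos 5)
Your proposal is correct and follows essentially the same route as the paper: the paper also builds a shift-commuting section $\psi:Y\to X$ (choosing the smallest preimage symbol coordinatewise) and identifies the target potential as $f\circ\psi+g$ with $g(y)=\log r_{y_1}$, which is exactly your $g$. The only cosmetic differences are that the paper verifies $f\circ\psi\in Bow(Y)$ directly from $f\in Bow(X)$ rather than via the uniform comparison with the locally constant $\log\bar g_n$, and it concludes by matching $\lim_n\frac{1}{n}\int\log\bar g_n\,dm$ with $\int(g+f\circ\psi)\,dm$ for all invariant $m$ and invoking Theorem~\ref{main1}, whereas you transfer the Gibbs inequality on cylinders directly; both are valid.
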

\begin{proof}
Let $(X, \sigma_X)$ be the full shift of $k$ symbols, $1, \dots, k$.  First recall from Theorem \ref{main1} that 
$\nu$ is the unique invariant Gibbs measure for $\bar \G=\{\log \bar g_n\}_{n=1}^{\infty}$.  
For $y\in Y$, define $\psi:Y\rightarrow X$ by $\psi(y)=x$, where $x=(x_1, \dots, x_n, \dots)$ is defined by $\pi(x)=y$ with the property that 
for any $x'=(x'_1, \dots, x'_n, \dots)
\in X$ such that $\pi(x')=y, 1\leq x_i\leq x'_i$ for all $i\in \N$. 
In other words, for each $x_i$, we take $x_i$ to be the smallest positive integer projected to $y_i$. 
Such $x$ is uniquely determined. 
Clearly $\psi$ is a continuous function on $Y$. 
By assumption, for each $x$ such that $\pi (x)\in [y_1\dots y_n], y\in [y_1\dots y_n]$,  
$$\bar g_n(y)\leq A
\vert \pi^{-1}[y_1\dots y_n]\vert
e^{f(x)+\dots+f(\sigma^{n-1}_X(x))}.$$
Hence for each $y\in [y_1\dots y_n]$,
\begin{equation}\label{ine1}
\bar g_n(y) \leq A\vert \pi^{-1}[y_1\dots y_n]\vert
e^{f(\psi(y))+
f(\sigma_{X}(\psi (y)))+\dots+f(\sigma^{n-1}_X (\psi(y)))}.
\end{equation}
Similarly, we have
\begin{equation} \label{ine2}
\bar g_n(y)\geq 
\frac{1}{A}\vert \pi^{-1}[y_1\dots y_n] \vert
e^{f(\psi(y))+
f(\sigma_{X}(\psi(y)))+\dots +f(\sigma^{n-1}_X(\psi(y)))}.
\end{equation}
We claim that $f\circ\psi \in  Bow(Y)$.
Since $f\in Bow(X)$, there exists $M$ such that 
$$\sup\{(f(x)+\dots+f(\sigma^{n-1}_Xx))-(f(x')+\dots +f(\sigma^{n-1}_Xx')):
x_i=x'_i, 1\leq i\leq n\}\leq M, \text {for all } n\in \N.$$
For $y,y'\in Y$, $y_i=y'_i$ for $1\leq i\leq n,$ $\psi(y)_i=\psi(y')_i$
for $1\leq i\leq n$ by definition of $\psi$. Also, clearly, 
$\psi(\sigma^{i}_Yy)=\sigma^{i}_X(\psi(y))$ for each $i=0, 1, \dots, n-1.$
Therefore, 
$$\sup\{(f(\psi(y))+\dots+f(\psi(\sigma^{n-1}_Yy)))-
(f(\psi(y'))+\dots +f(\psi(\sigma^{n-1}_Yy')):
y_i=y'_i, 1\leq i\leq n\}\leq M, \text {for all } n\in \N.$$
Hence, $f\circ\psi\in Bow(Y)$. For any $m\in M(Y, \sigma_Y)$, using (\ref{ine1}),  
\begin{align*}
&\lim_{n\rightarrow \infty}\frac{1}{n}\int \log \bar g_n(y)dm
\leq \lim_{n\rightarrow \infty}\frac{1}{n}\int \log (A \vert \pi^{-1}[y_1\dots y_n]\vert e^{f(\psi(y))+\dots 
+f(\sigma^{n-1}_Y\psi(y))}) dm\\ 
&=\lim_{n\rightarrow\infty} \frac{1}{n}\int \log \vert \pi^{-1}[y_1\dots y_n]\vert dm+ 
\lim_{n\rightarrow\infty}\frac{1}{n}\int \log e^{f(\psi(y))+\dots 
+f(\sigma^{n-1}_Y\psi(y))}dm\\
&=\int (g +f\circ \psi) dm,
\end{align*}
where $g$ is a locally constant function defined in the proof of Corollary \ref{cover1}.
Similarly, using (\ref{ine2}), we obtain  
$\int (g +f\circ \psi) dm\leq \lim_{n\rightarrow \infty}\frac{1}{n}\int \log \bar g_n(y)dm$. 
%agh:  inserted "measure" after gibbs.  Please check.
Therefore, applying Theorem \ref{main1}, $\nu=\pi\mu_{f}$ is the unique Gibbs measure for $g+f\circ \psi\in Bow(Y)$.
\end{proof}

%agh:  the "and those for" in the section title sounds kind of odd to me, but I don't know how to modify it.
\section{images of Gibbs states for almost additive potentials and continuous functions}\label{sectioncomp}
Let $(X, \sigma_X), (Y, \sigma_Y))$ be subshifts and 
$\pi:X\rightarrow Y$ be a factor map. Suppose that $X$ has the specification property.
Let $\mu$ be a unique invariant Gibbs measure for an almost additive 
potential $\F=\{\log f_n\}_{n=1}^{\infty}$ with bounded variation. 
In Section~\ref{sectionfactor}, we 
characterized $\pi\mu$ as a unique invariant Gibbs measure for an asymptotically subadditive
potential $\G=\{\log g_n\}_{n=1}^{\infty}$ (see Theorem~\ref{main1}). In this section, 
we relate our results to the theory of 
factor maps of Gibbs equilibrium states for continuous functions (see \cite{CUO, CU, V, JY, PK, K}).
   
We start with the case when $\mu$ is a unique invariant Gibbs measure for a continuous function 
which depends only on the 
first coordinate. For this purpose, we define for $n\in \N, x\in X$, 
$f_n(x)=e^{f(x)+\dots +f(\sigma^{n-1}_Xx)}$. 
Then $\F=\{\log f_n\}_{n=1}^{\infty}$ is an additive potential with bounded variation.      
We continue to use this notation throughout this section.

\begin{proposition}
Suppose $(X, \sigma_X)$ is a full shift and 
$f\in C(X)$ depends on the first coordinate of $x\in X$. For $n\in \N$, define $\bar g_n$ and $\bar \G=\{\log \bar g_n\}_{n=1}^{\infty}$ 
on $Y$ as in Corollary~\ref{maincoro}. 
Then, for all $n\geq 2$, 
$\bar g_n(y)=\bar g_1(y)\bar g_{n-1}(\sigma_Yy)$.  Therefore, for all $m\in M(Y, \sigma_Y)$,  
\begin{equation}\label{key3}
\lim_{n\rightarrow \infty}\frac{1}{n}\int \log \bar g_n (y) dm
=\int \log \bar g_1(y) dm.
\end{equation}
Hence the unique invariant Gibbs measure for $\bar \G$ on $Y$, which is the image of the unique invariant 
Gibbs measure for $f$, is the unique invariant Gibbs measure for the 
locally constant function $\log \bar g_1$ on $Y$.   
\end{proposition}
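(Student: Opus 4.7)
The plan is to use the fact that $f$ depends only on the first coordinate of $x$, which makes $f(x)+f(\sigma_X x)+\cdots+f(\sigma^{n-1}_X x)$ depend only on $x_1,\dots,x_n$. First I would rewrite $\bar g_n(y)$ in a cleaner form: since the exponent $f(x)+\cdots+f(\sigma_X^{n-1}x)$ is constant on every cylinder $[x_1\dots x_n]$, the supremum over the possible sets $E_n(y)$ is attained trivially (every choice gives the same value), so for any $y\in[y_1\dots y_n]$,
\begin{equation*}
\bar g_n(y) \;=\; \sum_{\substack{x_1\dots x_n\in B_n(X)\\ \pi(x_1\dots x_n)=y_1\dots y_n}} e^{f(x_1\dots x_n\cdot)+f(x_2\dots x_n\cdot)+\cdots+f(x_n\cdot)}.
\end{equation*}
Since $X$ is a full shift, the constraints on $x_1$ and on $x_2\dots x_n$ decouple: $x_1$ ranges over $\pi^{-1}(y_1)$, and $x_2\dots x_n$ ranges independently over words projecting to $y_2\dots y_n$. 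The exponential therefore factors, yielding
\begin{equation*}
\bar g_n(y) \;=\; \Bigl(\sum_{\pi(x_1)=y_1} e^{f(x_1)}\Bigr)\Bigl(\sum_{\pi(x_2\dots x_n)=y_2\dots y_n} e^{f(x_2)+\cdots+f(x_n)}\Bigr) \;=\; \bar g_1(y)\,\bar g_{n-1}(\sigma_Y y),
\end{equation*}
which gives the first claim.

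Next I would iterate this multiplicative relation to obtain $\log \bar g_n(y)=\sum_{i=0}^{n-1}\log \bar g_1(\sigma_Y^i y)$, so $\{\log \bar g_n\}_{n=1}^\infty$ is precisely the additive potential generated by the continuous (in fact locally constant, since $\bar g_1$ depends only on $y_1$) function $\log \bar g_1$. Integrating against any $m\in M(Y,\sigma_Y)$ and using $\sigma_Y$-invariance gives
\begin{equation*}
\frac{1}{n}\int \log \bar g_n\,dm \;=\; \frac{1}{n}\sum_{i=0}^{n-1}\int \log \bar g_1\circ \sigma_Y^i\,dm \;=\; \int \log \bar g_1\,dm,
\end{equation*}
which is exactly equation (\ref{key3}) (with the limit trivial since the sequence is constant in $n$).

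For the final conclusion, I would invoke Corollary~\ref{maincoro}: the image measure $\nu=\pi\mu$ is the unique invariant Gibbs measure for $\bar\G=\{\log \bar g_n\}_{n=1}^\infty$ on $Y$. But the identity $\bar g_n(y)=\exp\bigl(\sum_{i=0}^{n-1}\log \bar g_1(\sigma_Y^i y)\bigr)$ shows that the Gibbs property for $\bar\G$ as stated in Definition~\ref{defgb} is word-for-word the Gibbs property for the continuous (locally constant) function $\log \bar g_1\in C(Y)$, with the same pressure $P_Y(\bar\G)$. Hence $\nu$ is the unique invariant Gibbs measure for $\log \bar g_1$, as claimed. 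There is no real obstacle here; the only thing to be careful about is verifying that the supremum in the definition of $\bar g_n(y)$ collapses (so that $\bar g_n$ is literally a sum and factorizes cleanly), which relies jointly on the full-shift hypothesis and on $f$ depending on a single coordinate.
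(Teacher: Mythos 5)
Your proposal is correct and follows essentially the same route as the paper: the heart of both arguments is that, because $f$ depends only on the first coordinate and $X$ is a full shift, the sum defining $\bar g_n(y)$ decouples over the symbols $x_1$ and the word $x_2\dots x_n$, yielding the multiplicative identity $\bar g_n(y)=\bar g_1(y)\bar g_{n-1}(\sigma_Y y)$, after which \eqref{key3} and the final claim follow from invariance of $m$ and Corollary~\ref{maincoro}. The only cosmetic difference is that you obtain the identity as a single direct equality (after observing that the supremum over $E_n(y)$ collapses), whereas the paper derives the inequality $\bar g_1(y)\bar g_{n-1}(\sigma_Y y)\leq \bar g_n(y)$ from the factorization and closes the other direction using the general subadditivity of $\bar\G$.
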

\begin{proof}
Fix $y=(y_1, \dots, y_n,\dots)\in Y$. Let $\pi^{-1}\{y_1\}=\{a^{1}_1, \dots a^{1}_k\}$ for some $k\in \N$.
Take one point $\bar x$ from a cylinder set $[x_2\dots x_n]$ such that  
$[x_2 \dots x_n]\subseteq \pi^{-1}([y_2\dots y_n])$.
Let $\tilde x_i=a^{1}_i\bar{x},1\leq i\leq k$.  
Then $\tilde x_i\in [a^{1}_ix_2\dots x_n]$ and  
$[a^{1}_ix_2\dots x_n]\subseteq \pi^{-1}([y_1\dots y_n])$ 
for each $1\leq i \leq k$. Let $w \in X$ be fixed.
Observe that 
\begin{equation*}
\sum_{i=1}^{k}e^{f(a^{1}_iw)}\sum_{\bar x\in [x_2\dots x_n], 
\pi(x_2\dots x_n)=y_2\dots y_n}
e^{f(\bar x)+\dots +f(\sigma^{n-2}_X \bar x)}
=\sum_{\tilde x_i=a^1_i\bar x, 1
\leq i \leq k}
e^{f(\tilde {x}_i)+\dots +f(\sigma^{n-1}_X \tilde {x}_i)},
\end{equation*}
where the second summation is taken over all $[x_2\dots x_n]$ such that
$[x_2 \dots x_n]\subseteq \pi^{-1}([y_2\dots y_n])$ 
and $\bar x$ in the third summation is taken over the same set as $\bar x$ in the second summation.
Since $f$ depends on the first coordinate, we obtain the above inequality for any $w\in X$ and
for any $\bar x \in [x_2\dots x_n]$. Therefore,  we obtain 
$\bar g_1(y)\bar g_{n-1}(\sigma_Yy)\leq \bar g_{n}(y)$. 
Since   
$\bar \G=\{\log \bar g_n\}_{n=1}^{\infty}$ is a subadditive potential, we obtain 
$\bar g_{1}(y)\bar g_{n-1}(\sigma_Yy)= \bar g_{n}(y)$ for each $y\in Y, n, m\in \N$.
Therefore, we obtain  (\ref{key3}). The rest of the theorem follows immediately from Corollary~\ref{maincoro}.
\end{proof}

%agh:  "is summable variation" sounds very strange to me.  I don't know the
%definition of summable variation, but "is of summable variation" or "has
%summable variation" sound better.
%In what follows, I change "is summable variation" to "is of summable variation".

Now we consider the special case when $(X, \sigma_X)$ is a full shift and $f\in C(X)$ is of summable variation.
Let $\mu_f\in M(X, \sigma_X)$ be a unique invariant Gibbs measure for $f$. 
Pollicott and Kempton \cite{PK} considered the image of $\mu_f$ under a factor map 
$\pi:X\rightarrow Y$.
Fix $w\in X$. For $n\in \N, y=(y_1,\dots,y_n, \dots)\in Y$, they defined
\begin{equation*}\label{pk1}
g_n(y, w)=\sum_{b_n=x_1\dots x_n\in B_n(X), \pi(x_1\dots x_n)=y_1\dots y_n}
e^{f(b_nw)+\dots+f(\sigma^{n-1}_X b_nw)}
\text{ and } u_{w,n}(y)=\frac{g_{n+1}(y, w)}{g_n(\sigma_Y y, w)}.
\end{equation*}
In particular, they showed that $\{u_{w,n}(y)\}_{n=1}^{\infty}$ converges uniformly to a continuous 
function $u:Y\rightarrow \R$ and it is independent of $w\in X$ (see Proposition~3.2 in~\cite{PK}).
Using this, they proved that $\pi\mu_f$ is an invariant Gibbs measure for $\log u$.
Kempton \cite{K} extended this result to a function $f$ which is of summable variation on a subshift of finite type
under a certain condition. 

In Corollary \ref{maincoro} in Section \ref{sectionfactor}, we saw that  
$\pi\mu_f$ is a unique invariant
Gibbs measure for the subadditive potential $\bar \G$ on $Y$ where $\bar \G$ is defined in Corollary \ref{maincoro}. In the next proposition,  we will see that our construction of $\G$ 
in Theorem \ref{main1} is a generalization of $\log u$ defined in
\cite{PK}. We continue to use $g_n(y, w), u_{w,n}(y)$ and  $u:Y\rightarrow \R$ as above.     
  
\begin{proposition}\label{svcase}
Let $(X, \sigma_X)$ be a full shift and $\pi:X\rightarrow Y$ be a factor map. 
Suppose that $f\in C(X)$  
is of summable variation. Then for all $m\in M(Y, \sigma_Y),$
\begin{equation*}
 \lim_{n\rightarrow \infty}\frac{1}{n}\int \log \bar g_n(y) dm= \int \log u(y) dm,
\end{equation*}
 where $\bar g_n$ is defined as in Corollary \ref{maincoro}. 
\end{proposition}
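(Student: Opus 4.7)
The plan is to reduce the claimed identity to the corresponding statement for $g_n(\cdot,w)$ with $w\in X$ fixed, then telescope using the ratios $u_{w,n}$ and pass to the limit via uniform convergence and invariance of $m$.

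First, I would compare $\bar g_n(y)$ with $g_n(y,w)$. Because $f$ is of summable variation we have $f\in Bow(X)$, so the additive potential $\F=\{\log f_n\}_{n=1}^{\infty}$ has bounded variation; let $M>0$ be a constant with $f_n(x)/f_n(x')\leq M$ whenever $x_i=x'_i$ for $1\leq i\leq n$. Since $X$ is a full shift, the sup in the definition of $\bar g_n(y)$ can be moved inside the sum to give $\bar g_n(y)=\sum \sup_{x\in[x_1\dots x_n]} f_n(x)$, the summation running over all $x_1\dots x_n\in B_n(X)$ with $\pi(x_1\dots x_n)=y_1\dots y_n$. Since $x_1\dots x_n w$ is one point in $[x_1\dots x_n]$, bounded variation yields
\begin{equation*}
g_n(y,w)\leq \bar g_n(y)\leq M\, g_n(y,w)\qquad\text{for all } n\in\N,\ y\in Y.
\end{equation*}
Consequently $\log\bar g_n(y)=\log g_n(y,w)+O(1)$ uniformly, and it suffices to prove $\lim_{n\to\infty}(1/n)\int\log g_n(y,w)\,dm=\int\log u\,dm$.

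Next I would telescope $g_n(y,w)$. From $u_{w,n-1}(y)=g_n(y,w)/g_{n-1}(\sigma_Y y,w)$, iteration gives
\begin{equation*}
\log g_n(y,w)=\sum_{k=0}^{n-2}\log u_{w,n-1-k}(\sigma_Y^k y)+\log g_1(\sigma_Y^{n-1}y,w).
\end{equation*}
Integrating against the $\sigma_Y$-invariant measure $m$ and reindexing $j=n-1-k$ produces
\begin{equation*}
\int\log g_n(y,w)\,dm=\sum_{j=1}^{n-1}\int\log u_{w,j}(y)\,dm+\int\log g_1(y,w)\,dm.
\end{equation*}
By Proposition~3.2 of \cite{PK}, $u_{w,n}\to u$ uniformly on $Y$; since each $u_{w,n}$ is strictly positive and $u>0$ on the compact space $Y$, $\log u_{w,n}\to\log u$ uniformly. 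Hence $\int\log u_{w,j}\,dm\to\int\log u\,dm$, and by Cesàro averaging $(1/n)\sum_{j=1}^{n-1}\int\log u_{w,j}\,dm\to\int\log u\,dm$. The term $(1/n)\int\log g_1(\cdot,w)\,dm$ vanishes in the limit since $\log g_1(\cdot,w)$ is bounded on the compact space $Y$. Dividing the displayed equation by $n$ and combining with the Step~1 comparison yields the identity.

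The one delicate point is confirming uniform positivity (so that $\log u$ is a bounded continuous function and the Cesàro step is legitimate); this follows because $u_{w,n}>0$ and the uniform limit $u$ inherits strict positivity from the Gibbs-type lower bound on $g_n(y,w)$ implied by $\F$ having bounded variation and finite topological pressure. The remainder of the argument is a routine combination of the bounded-variation comparison, telescoping, and Birkhoff/Cesàro convergence.
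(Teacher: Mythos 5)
Your proposal is correct and follows essentially the same route as the paper: the two-sided comparison $g_n(y,w)\le \bar g_n(y)\le M g_n(y,w)$ from bounded variation, the telescoping of $g_n(\cdot,w)$ via the ratios $u_{w,j}$, invariance of $m$, and the uniform convergence $u_{w,n}\to u$ from \cite{PK} combined with a Ces\`aro/bounded-convergence step (the paper passes to the limit pointwise in the averages $\frac{1}{n}\log(u_{w,n}\cdots u_{w,1})$ before integrating, while you integrate first and then take Ces\`aro means, which is equivalent). The only cosmetic difference is the positivity issue: the paper settles it by noting $u_{w,n}(y)>1$ directly, which gives the uniform two-sided bound on $\log u_{w,n}$ more cleanly than your appeal to a Gibbs-type lower bound.
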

\begin{proof} 
Fix $w\in X$ and, for any 
$n\in \N, y\in Y$, let $u_{w, n}(y)={g_{n+1}(y, w)}/{g_n(\sigma_Yy, w)}$.
Since $f\in C(X)$ is of summable variation, $\F=\{\log f_n\}_{n=1}^{\infty}$ has bounded variation.  
We first claim that 
\begin{equation}\label{eq1}
\lim_{n\rightarrow \infty}\int 
\frac{1}{n}\log \bar g_{n}(y) dm =\lim_{n\rightarrow \infty}\int 
\frac{1}{n}\log g_{n+1}(y,w) dm.
\end{equation}
Observing that $g_n(y, w)\leq \bar g_n(y)\leq M g_n(y, w)$ 
for each $n\in \N, w \in X$, (\ref{eq1}) is clear.
Next we claim that 
\begin{equation}\label{key4}
1<u_{w, n}(y)\leq M\frac{\bar g_{n+1}(y)}{\bar g_n(\sigma_Yy)}\leq M\bar g_1(y).
\end{equation}
Let $y=(y_1,\dots, y_{n}, \dots)\in Y$. Let $\pi^{-1}\{y_1\}=
\{a^1_1, \dots a^1_{k}\}$ for some $k\in \N$.
Taking any $x_2\dots x_{n+1}\in B_n(X)$ such that 
$\pi(x_2\dots x_{n+1})=y_2\dots y_{n+1}$, 
we have, for $1\leq i\leq k$, $a^{1}_ix_2\dots x_{n+1}\in B_{n+1}(X)$ and 
$\pi(a^{1}_ix_2\dots x_{n+1})=y_1\dots y_{n+1}$. Therefore, clearly, 
$g_{n+1}(y, w)>g_n(\sigma_Yy, w)$.
This implies $1<u_{w, n}(y)$. Also, again using 
the fact that $g_n(y, w)\leq \bar g_n(y)\leq M  g_n(y, w)$, we obtain
the second inequality in (\ref{key4}). The third inequality in (\ref{key4}) is clear
because $\bar \G$ is subadditive. Hence we obtain the claim. Now write $g_{n+1}(y, w)=u_{w, n}(y)g_n(\sigma_Y y, w)$. Then by definition,
\begin{align*}
g_{n+1}(y, w)&=u_{w, n} (y) u_{w, n-1}(\sigma_Y y)g_{n-1}(\sigma^{2}_Y y, w)\\
&=u_{w, n} (y) u_{w, n-1}(\sigma_Y y)u_{w, n-2}(\sigma^2_Y y)g_{n-2}(\sigma^{3}_Y y, w)\\
&=u_{w, n} (y) u_{w, n-1}(\sigma_Y y)\dots u_{w, 1}(\sigma^{n-1}_Y y) g_1(\sigma^{n}_Y y, w).\\
\end{align*}
Fix $n\in\N$. Taking the logarithm of both sides, dividing both sides by $n$ and integrating both sides with respect to $m\in M(Y, \sigma_Y)$,
\begin{align*}
\int \frac{1}{n} \log g_{n+1}(y, w) dm 
&=\int \frac{1}{n}\log (u_{w, n}(y)\dots  u_{w, 1}(\sigma^{n-1}_Y y))dm
 +\int \frac{1}{n} \log g_1(\sigma^{n}_Y y, w)dm\\
&=\int \frac{1}{n}\log (u_{w, n}(y)\dots  u_{w, 1}(y))dm +\int \frac{1}{n} \log g_1(\sigma^{n}_Y y , w)dm,
\end{align*}
where in the second equality we use the fact that $m$ is invariant.
Noting that $g_1(y , w)$ is continuous on $Y$ and thus bounded,  
letting $n\rightarrow \infty$, 
\begin{equation}\label{eq0}
\lim_{n\rightarrow \infty}\int 
\frac{1}{n}\log g_{n+1}(y,w) dm = \lim_{n\rightarrow \infty} \int \frac{1}{n}\log (u_{w, n}(y)\dots  u_{w, 1}(y))dm. 
\end{equation}
We claim that
\begin{equation}\label{eq3}
\lim_{n\rightarrow \infty} \int \frac{1}{n}\log (u_{w, n}(y)\dots  
u_{w, 1}(y))dm=\int \log u(y)dm.
\end{equation}
To see this, we first note by (\ref{key4}) that 
$\vert \log u_{w, n}(y)\vert\leq \log M\max_{y\in Y}\bar{g}_1(y)$ for all $n\in \N$. Let $A=
\max_{y\in Y}\bar{g}_1(y)$. For each $y\in Y, n\in \N$, let $p_{w,n}(y)=(\log (u_{w, n}(y)\dots  u_{w, 1}(y)))/n$. 
Since $\{u_{w,n}\}_{n=1}^{\infty}$ converges
uniformly to a continuous function $u_{w}$ (see \cite{PK}), 
given $\epsilon>0$, there exists $N_w\in \N$ such that 
\begin{equation*}
u_{w}(y)-\epsilon <u_{w,n}(y)<u_{w}(y)+\epsilon \text{ for all } n>N_w, y\in Y.
\end{equation*}
Since $u_{w,n}(y)> 1$, we obtain 
\begin{equation*}
\frac{n-N_w}{n} \log (u_{w}(y)-\epsilon) 
<\frac{1}{n} \log (u_{w,n}(y)\dots u_{w,1}(y))\leq \frac{N_w}{n}\log MA 
+\frac{n-N_w}{n}\log (u_{w}(y)+\epsilon). 
\end{equation*}
Letting $n\rightarrow \infty$, 
\begin{equation*}
\log (u_{w}(y)-\epsilon) \leq \lim_{n\rightarrow\infty}\frac{1}{n} 
\log (u_{w,n}(y)\dots u_{w,1}(y))\leq 
\log (u_{w}(y)+\epsilon). 
\end{equation*}
Since we can do this argument for each $\epsilon >0$, we obtain
\begin{equation}\label{lim}
\lim_{n\rightarrow\infty} p_{w,n}(y)=
\lim_{n\rightarrow\infty} \frac{1}{n} \log (u_{w,n}(y)\dots u_{w,1}(y))=
\log u_{w}(y). 
\end{equation}
Since $p_{w,n}$ is bounded uniformly, using (\ref{lim}), we obtain
\begin{equation}\label{eq4}
\lim_{n\rightarrow \infty} \int \frac{1}{n}\log (u_{w, n}(y)\dots  
u_{w, 1}(y))dm=\int \log u_w(y)dm.
\end{equation}
Since the convergence of $u_{w,n}$ is independent of $w$ (see \cite{PK}), 
we can replace $u_w$ in (\ref{eq4}) by $u$. Hence we obtain (\ref{eq3}).  
The proof is completed by combining
(\ref{eq0}), (\ref{eq1}) and (\ref{eq3}).  
\end{proof}

\section{Question} 
Here we pose a question that relates our results to the theory of factor maps of Gibbs
equilibrium states for continuous functions.

In Theorem \ref{main1}, when can we replace 
$\G$ by a continuous function, i.e., when do we have a $g\in C(Y)$ that satisfies 
$\lim_{n\rightarrow\infty}\frac{1}{n}\int \log g_n dm=\int g dm$ 
for every $m\in M(Y, \sigma_Y)$?\\

{\em Acknowledgements.}
I would like to thank Professor Karl Petersen and Professor Edgardo Ugalde for useful comments on the paper.  
I am also grateful to Professor Markos Maniatis for useful discussion about renormalization.
Finally, I would also like to thank the editor and referee for 
their suggestions and comments which have greatly improved the paper. 
This research was supported by the Center of Dynamical Systems and Related Fields c\'odigo ACT1103  PIA - Conicyt 
and by Proyecto Fondecyt Iniciaci\'{o}n 11110543.


\begin{thebibliography}{99}

\bibitem{AQ} M. Allahbakhshi and A. Quas, {Class degree and relative maximal entropy}, {\em Trans. Amer. Math. Soc.},
365, 2013, no. 3, 1347--1368.

\bibitem{BF} J. Barral and D.-J. Feng, {Weighted thermodynamic
formalism on subshifts and applications.} {\em Asian J. Math.} 16 (2012), no. 2, 319--352.

\bibitem{B2006} L. Barreira, {Nonadditive thermodynamic formalism: equilibrium and Gibbs measures.}
{\em Discrete Contin. Dyn. Syst.} {16} (2006), 279--305.

\bibitem{B}   T. Bedford. {Crinkly curves, Markov partitions and box
dimension in self-similar sets.} {\em Ph.D. Thesis}, University of
Warwick, 1984.

\bibitem{BP} M. Boyle and K. Petersen, {Hidden Markov processes in the context of symbolic dynamics.}
{\em Entropy of hidden Markov processes and connections to dynamical systems}, 5-71. London Math. Soc. Lecture Note Ser., 385, Cambridge
Univ. Press, Cambridge, 2011.
 
\bibitem {BT} M. Boyle and S. Tuncel, {Infinite-to-one codes and {M}arkov measures.}
    {\em Trans. Amer. Math. Soc.} {285} (1984), 657--684.

\bibitem{CFH} Y.-L. Cao, D.-J. Feng, and W. Huang, {The thermodynamic formalism for sub-additive potentials.} {\em Discrete Contin. Dyn. Syst.}
    {20} (2008), 639--657.

\bibitem{CUO} J.-R. Chazottes and E. Ugalde, 
{Projection of Markov measures may be Gibbsian.} {\em J. Stat. Phys.} 111 (2003), no. 5-6, 1245-1272.

\bibitem {CU} J.-R. Chazottes and E. Ugalde, {On the preservation of Gibbsianess under symbol amalgamation.}
{\em Entropy of hidden Markov processes and connections to dynamical systems}, 72-97. London Math. Soc. lecture Note Ser., 385, Cambridge
Univ. Press, Cambridge, 2011.

\bibitem{VE} A. van Enter, {Renormalization Group, Non-Gibbsian states, their relationship and further developments.}
      RIMS Lecture Notes 1482: Applications of Renormalization Group Methods in the
      Mathematical Sciences, 49-64, 2006.


\bibitem{VFS} A. van Enter, R. Fern\'{a}ndez and A. Sokal,   {Regularity properties and pathologies of 
position-space renormalization-group transformations: scope and limitations of Gibbsian theory}
{\em J. Stat. Phys.} {72} (1993),  {no. 5-6}, {879-1167}.


\bibitem{Feng} D.-J. Feng, {Equilibrium states for factor maps
between subshifts.} {\em Adv. Math.} 226 (2011), no. 3, 2470-2502. 

\bibitem{FH} D.-J. Feng and W. Huang, 
{Lyapunov spectrum of asymptotically sub-additive potentials.}
{\em Comm. Math. Phys.} 297 (2010), no. 1, 1-43
 
\bibitem{IY} G. Iommi and Y. Yayama, {Almost-additive thermodynamic formalism for countable Markov shifts.}
{\em Nonlinearity} 25 (2012), no. 1, 165-191.

      
\bibitem{K} T. M. W. Kempton, 
{Factors of Gibbs measures for subshifts of finite type.} {\em Bull. Lond. Math. Soc.} 43 (2011), no. 4, 751-764.

\bibitem{KP2} R. Kenyon and Y. Peres, {Hausdorff dimensions of sofic affine-invariant sets.}
{\em Israel J. Math.} {94} (1996), 157-178.

\bibitem{KP} R. Kenyon and Y. Peres, {Measures of full dimension on affine-invariant sets.}
{\em Ergodic Theory Dynam. Systems.} {16} (1996), 307-323.

%\bibitem{KT} B. Kitchens and S. Tuncel, {\em Fintely measures for subshifts of finte type and sofic systems}, Mem.Amer.Math.Soc.
%58(338), 1985.

\bibitem{LW} F. Ledrappier and P. Walters, {A relativised variational principle for continuous transformations.}
{\em J. London Math. Soc. } {(2)16} (1977), 568-576.


\bibitem{LM} D. Lind and B. Marcus,  
{\em An Introduction to Symbolic Dynamics and Coding.}{ Cambridge University Press}, {Cambridge}, 1995.

\bibitem{MMR} C. Maes, F. Redig and  A. van Moffaert, 
{Almost Gibbsian versus weakly Gibbsian measures.}
{\em Stochastic Proccess. Appl.} 79 (1999), no.1, 1-15.


\bibitem{banff} B. Marcus, K. Petersen and T. Weissman,
{\em Entropy of hidden Markov processes and connections to dynamical systems}, London Math. Soc. Lecture Note Ser., 385, Cambridge
Univ. Press, Cambridge, 2011.


\bibitem{Mc} C. McMullen, {The Hausdorff dimension of general
Sierpi\'{n}ski carpets.} {\em Nagoya Math. J.} {96} (1984),
1-9.

\bibitem{m2006} A. Mummert, {The thermodynamic formalism for almost-additive sequences.}
{\em Discrete Contin. Dyn. Syst.}  {16} (2006), 435--454.

\bibitem{EO} E. Olivier, {Uniqueness of the measure with full dimension on sofic affine-invariant subsets of the 2-torus.}
{\em Ergodic Theory Dynam. Systems.} 30 (2010) no. 5  1503-1528.


\bibitem{PQS} K. Petersen, A. Quas, and S. Shin, {Measures of maximal relative entropy.}
{\em Ergodic Theory Dynam. Systems.} {23} (2003), 207--223.

\bibitem{PS} K. Petersen and S. Shin, {On the definition of relative pressure for factor maps on
              shifts of finite type.} {\em Bull. London Math. Soc.}
              {37} (2005), {601-612}.

\bibitem{PK} M. Pollicott and T. Kempton, 
{Factors of Gibbs measures for full shifts.}  {\em Entropy of hidden Markov processes and connections to dynamical systems}, 246-257, 
London Math. Soc. Lecture Note Ser., 385, Cambridge
Univ. Press, Cambridge, 2011.


\bibitem{V} E. Verbitskiy, {On factors of {$g$}-measures.} {\em Indag. Math. (N.S.)} {22} (2011),  {no. 3-4}, {315-329}.
      
\bibitem{W1} P. Walters, {\em An {I}ntroduction to {E}rgodic {T}heory (Graduate Texts in Mathematics,
79).} Springer-Verlag, New York, 1982.

\bibitem{W2} P. Walters, {Relative pressure, relative equilibrium states, compensation functions
 and many-to-one codes between subshifts.}
{\em Trans. Amer. Math. Soc.} {296} (1986), 1-31.

\bibitem{W3} P. Walters, {Regularity conditions and Bernoulli properties of equilibrium states and g-measures.} 
{\em J. London. Math. Soc.} (2) {71} (2005), 379-396.  

\bibitem{JY} J. Yoo, {On factor maps that send {M}arkov measures to {G}ibbs
              measures.} {\em J. Stat. Phys.} {141} (2010), {no. 6}, {1055--1070},

\bibitem {Y1} Y. Yayama,  {Dimensions of compact invariant sets of some expanding
maps.} {\em Ergodic Theory  Dynam. Systems.} {29} (2009), no. 1,
281--315.  

\bibitem {Y2} Y. Yayama,  {Existence of a measurable saturated compensation function between subshifts and
its applications.} {\em Ergodic Theory Dynam. Systems.} {31} (2011), no. 5, 1563-1589.

\bibitem {Y3} Y. Yayama,  {Applications of a relative variational principle to dimensions of nonconformal expanding maps.}
 {\em Stoch. Dyn.} {11} (2011), no. 4 643-679.

\bibitem{CZ} Y. Zhao and Y. Cao, {Measure-theoretic pressure for subadditive potentials}.
{\em Nonlinear Anal.} 70 (2009), no.6, 2237-2247.

\bibitem{ZC}  Y. Zhao and Y. Cao, {On the topological pressure of random bundle transformations in sub-additive case}.
 {\em J. Math. Anal. Appl.} {342} (2008) 715-725.
\end{thebibliography}
\end{document}